\numberwithin{equation}{section}
\patchcmd{\thesubsection}{\arabic}{\arabic}{}{}
\patchcmd{\@seccntformat}{\@secnumfont}{%
  \@secnumfont\expandafter\protect\csname format#1\endcsname}{}{}
\patchcmd{\@startsection}{\@afterindenttrue}{\@afterindentfalse}{}{}
\patchcmd{\subsection}{-.5em}{.3\linespacing}{}{}
\theoremstyle{plain}
\newtheorem{theorem}{Theorem}[section]
\newtheorem{proposition}[theorem]{Proposition}
\newtheorem{lemma}[theorem]{Lemma}
\newtheorem{corollary}[theorem]{Corollary}
\theoremstyle{remark}
\newtheorem{remark}[theorem]{Remark}
\newtheorem{question}[theorem]{Question}
\newcommand{\CG}[3][]{\ensuremath{\mathrm{CH}_{#1}^{#2}  (#3)}}
\newcommand{\Pic}[1]{\ensuremath{\mathrm{Pic} (#1)}}
\newcommand{\dv}[1]{\ensuremath{\mathrm{div} (#1)}}
\newcommand{\ad}[1]{\ensuremath{\mathrm{ad}  (#1)}}
\newcommand{\ENd}[1]{\ensuremath{\mathrm{End}  (#1)}}
\newcommand{\cat}[1]{\ensuremath{\mathcal{#1}}}
\newcommand{\at}[2][]{\ensuremath{\mathrm{at}_{#1} (#2)}}
\newcommand{\G}[2][]{\ensuremath{\mathbf{CH}_{#1} (#2)}}
\newcommand{\id}[1]{\ensuremath{\mathbf{1}_{#1}}}
\renewcommand{\dim}[2][]{\ensuremath{\mathrm{dim}_{#1}(#2)}}
\newcommand{\rk}[2][]{\ensuremath{\mathrm{rk}_{#1}(#2)}}
\newcommand{\h}{\ensuremath{\mathbf{H}}}
\newcommand{\Z}{\ensuremath{\mathbb{Z}}}
\newcommand{\A}{\ensuremath{\mathbb{A}}}
\newcommand{\Q}{\ensuremath{\mathbb{Q}}}
\newcommand{\R}{\ensuremath{\mathbb{R}}}
\newcommand{\p}{\ensuremath{\mathbb{P}}}
\newcommand{\C}{\ensuremath{\mathbb{C}}}
\newcommand{\tr}[1]{\ensuremath{\mathrm{Tr}(#1)}}
\newcommand{\struct}[1]{\ensuremath{\mathcal{O}_{#1}}}
  \newcommand{\RCOH}[4][]{\ensuremath{\mathrm{H}^{#1}_{#2}(#3,#4)}} 
\newcommand{\coh}[3]{\ensuremath{\mathrm{H}^{#1}(#2,#3)}}
\begin{document}

\title[ moduli spaces of holomorphic and logarithmic connections]{A note on the moduli spaces of
holomorphic and logarithmic  connections over a compact Riemann surface}

\author{Anoop Singh}

\address{Department of Mathematical Sciences, Indian Institute of Technology (BHU), Varanasi 221005, India}
  \email{anoopsingh.mat@iitbhu.ac.in}

\subjclass[2020]{14D20, 14C15, 32C38, 14C34, 14E05, 14E08}
  \keywords{Logarithmic connection, Moduli space, Chow
  group, Differential operator, Torelli theorem, Rational variety}

\begin{abstract}
Let $X$ be a compact Riemann surface of genus $g \geq 3$.
We consider the moduli space of holomorphic connections
over $X$ and the moduli space of logarithmic connections 
singular over a finite subset of  $X$ with fixed 
residues. We determine the Chow group of these moduli spaces. We compute  the global sections of the sheaves of 
differential operators on ample line bundles and their symmetric powers 
over these moduli spaces, and show that  they are constant under certain 
conditions. We show the Torelli type 
theorem for the moduli space of logarithmic 
connections. We also describe the rational connectedness of these moduli spaces.
 \end{abstract}

\maketitle
\tableofcontents

\section{Introduction and statements of the results}

Let $X$ be a compact Riemann surface of genus 
$g \geq 3$. We consider the moduli space $\cat{M}_h(n)$ of rank $n$ holomorphic connections over $X$. In \cite{S1} and \cite{S2}, Simpson constructed the moduli space of holomorphic connections
over a smooth complex projective variety.

Let  $$S = \{x_1, \ldots, x_m \} $$  be a fixed subset of $X$  such 
that $x_i \neq  x_j$ for all $i \neq j$. We consider the 
moduli space $\cat{M}_{lc}(n,d)$ of logarithmic connections of rank $n$ and degree $d$, singular over 
$S$, with fixed residues (see section \ref{Pre} for the definition). The moduli space of logarithmic 
 connections over a complex projective variety singular
 over a smooth normal crossing divisor has been 
 constructed in \cite{N}. 
 
 \textbf{Assumption:} Throughout this article, we assume that the rank $n$ and degree $d$ are coprime, except for the case where we deal with the moduli space of holomorphic connections.
 
 Several algebro-geometric invariants like the Picard group,
 algebraic functions  of the 
 moduli space of holomorphic and logarithmic 
 connections have been studied, see \cite{B04}, \cite{BR1},
 \cite{BM7}, \cite{AS},  \cite{AS1}, and \cite{Se}.

 In the present article, our aim is to study the 
 Chow group, global sections of certain sheaves, Torelli type theorems, and rational connectedness of these moduli spaces.
 
The structure of the article is as follows.
In section \ref{Pre}, we define the notion of holomorphic and logarithmic connections in a holomorphic 
vector bundle over $X$, and recall their moduli spaces.

 In section \ref{Chow-conn}, we compute the  Chow group of the moduli spaces which is  motivated by the following result
 in \cite{CH}.
 Let $\cat{U}^s(2, \struct{X}(x_0))$ be the moduli space 
 of stable vector bundles of rank $2$ with determinant
 $\struct{X}(x_0)$, where $x_0 \in X$. Then,
 in \cite{CH}, the Chow group
of $1$-cycles on  $\cat{U}^s(2,\struct{X}(x_0))$ has been computed, and it is
proved that
\begin{equation}
\label{eq:0.12}
\CG[1]{\Q}{\cat{U}^s(2,\struct{X}(x_0))} \cong 
\CG[0]{\Q}{X}.
\end{equation}

Fix a holomorphic line bundle $L$ over $X$ of degree 
 $d$. Consider the moduli space of logarithmic connections $\cat{M}_{lc}(n,L)$ of rank $n$ and fixed 
 determinant $L$ as described in \eqref{eq:m2}.
 Let 
$\cat{M}_{lc}'(n,L) \subset \cat{M}_{lc}(n,L)$
be the moduli space of logarithmic connections $(E,D)$
with  $E$ stable as described in \eqref{eq:m3}.
Then, we show the following (see Theorem \ref{thm:2}).
For every $ 0 \leq l \leq  (n^2-1)(g-1)$, 
we have a canonical isomorphism 
\begin{equation}
\label{eq:0.1}
\G[l+(n^2-1)(g-1)]{\cat{M}_{lc}'(n,L)} \cong
\G[l]{\cat{U}^s(n,L)}.
\end{equation}
As a consequence for $n = 2$, we have (see Corollary \ref{cor:2}),
\begin{enumerate}
\item \label{o.1} $\G[3g-3]{\cat{M}_{lc}'(2,L)}
\cong \Z $.
\item \label{0.2} $\CG[3g-2]{\Q}{\cat{M}_{lc}'(2,L)}
\cong \CG[0]{\Q}{X}$.
\item \label{0.3} $\CG[6g-8]{\Q}{\cat{M}_{lc}'(2,L)}
\cong \CG[0]{\Q}{X} \oplus \Q$.
\end{enumerate}

Let $L_0$ be a holomorphic line bundle over $X$ of degree zero.
Let $\cat{M}'_h(n,L_0)$ and $\cat{U}^s(n,L_0)$ be the 
moduli space defined in \eqref{eq:m4} and \eqref{eq:3.15} respectively. Then, we show that (see Theorem \ref{thm:3}), 
for every $ 0 \leq l \leq  (n^2-1)(g-1)$, 
we have a canonical isomorphism 
\begin{equation}
\label{eq:0.24}
\CG[l+(n^2-1)(g-1)]{\Q}{\cat{M}_{h}'(n,L_0)} \cong
\CG[l]{\Q}{\cat{U}^s(n,L_0)}.
\end{equation}

In section \ref{Global-sec}, we study the global sections of certain locally free sheaves.
Let $\cat{M}_{lc}'(n,d)$ be the moduli space described in  \eqref{eq:m1}, and 
 $\zeta$ an ample 
 line bundle over $\cat{M}_{lc}'(n,d)$. For $k \geq 0$,
 let $\cat{D}^k(\zeta)$ denote the sheaf 
of differential operators on $\zeta$ of order $k$.
Consider the following natural morphism
\begin{equation}
\label{eq:0.21.5}
p_0: \cat{M}'_{lc}(n,d) \to \cat{U}^s(n,d)
\end{equation}
sending $(E,D)$ to $E$. Then 
we have a morphism 
\begin{equation}
\label{eq:0.41.15}
{\widetilde{p_0}}_{\sharp} : 
\coh{0}{T^*\cat{M}_{lc}'}{\struct{ T^*\cat{M}_{lc}'}}
\rightarrow \coh{0}{T^*\cat{U}^s(n,d)}{\struct{ T^*\cat{U}^s(n,d)}}.
\end{equation}
of vector spaces 
induced from $$\widetilde{p_0} : T^*\cat{U}^s(n,d) \to 
T^*\cat{M}'_{lc},$$ where $T^*\cat{U}^s(n,d)$ and 
$T^*\cat{M}'_{lc}$ are the cotangent bundles of $\cat{U}^s(n,d)$ and $\cat{M}'_{lc}(n,d)$ respectively.

Under the assumption that ${\widetilde{p_0}}_{\sharp}$
in \eqref{eq:0.41.15} is injective, we show that 
(see Theorem \ref{thm:4.1}),
for every $k \geq 0$,  
\begin{equation}
\label{eq:0.32}
 \coh{0}{\cat{M}_{lc}'(n,d)}{\cat{S}ym^k(\cat{D}^1 (\zeta))} = \C,
\end{equation}
and (see Proposition \ref{prop:4.1})
\begin{equation}
\label{0.pr}
 \coh{0}{\cat{M}_{lc}'(n,d)}{\cat{D}^k (\zeta)} = \C.
 \end{equation}
Under the same  assumption, the above result is true for 
the  moduli spaces $\cat{M}_{lc}'(n,L)$ (see \eqref{eq:m3}),
$\cat{M}'_h(n)$ (see \eqref{eq:m}) and $\cat{M}'_{h}(n,L_0)$
(see \eqref{eq:m4}).

In section \ref{Betti-mod-conn}, we prove the Torelli 
type result for the moduli space of logarithmic connections and change the notation to emphasis on $X$,
that is, 
$$
\cat{M}_{lc}(X) = \cat{M}_{lc}(X,S) :=
\cat{M}_{lc}(n,d)$$
and $$\cat{M}_{lc}(X,L)= \cat{M}_{lc}(X,S,L):= \cat{M}_{lc}(n,L).$$ First we show that 
the above moduli spaces do not depend on the choice of 
$S$ (see Lemma \ref{lem:5.1} and Lemma \ref{lem:5.2}), and therefore we remove $S$ from the notation.
We show the following (see Theorem \ref{thm:5.5}).

Let $(X,S)$ and $(Y, T)$ be two $m$-pointed compact Riemann surfaces of genus 
$g \geq 3$. Let $\cat{M}_{lc}(X,L)$ and $\cat{M}_{lc}(Y,L')$ be the corresponding moduli spaces of 
logarithmic connections. Then, $\cat{M}_{lc}(X,L)$ is isomorphic to $\cat{M}_{lc}(Y,L')$  if and only if $X$ is 
isomorphic to $Y$.

Next, we show the universal property of the morphism (see Proposition \ref{prop:5.7}) 
$$G : \cat{M}_{lc}(X) \longrightarrow Pic^d(X)$$
defined by sending $(E,D) \mapsto \bigwedge^nE$.
Thus, $\cat{M}_{lc}(X)$ determines the pair $(Pic^d(X), G)$   
up to an automorphism of $Pic^d(X)$.
In the end of section \ref{Betti-mod-conn}, we present 
a Torelli type theorem for $\cat{M}_{lc}(X)$, that is,
let $(X,S)$ and $(Y, T)$ be two $m$-pointed compact Riemann surfaces of genus 
$g \geq 3$. Let $\cat{M}_{lc}(X)$ and $\cat{M}_{lc}(Y)$ be the corresponding moduli spaces of 
logarithmic connections. Then, $\cat{M}_{lc}(X)$ is isomorphic to $\cat{M}_{lc}(Y)$  if and only if $X$ is 
isomorphic to $Y$.   

In the last section \ref{Rat}, we talk about rational
connectedness and rationality of the moduli space.
This section is motivated by the results in \cite{KS}. 
We show that the moduli spaces $\cat{M}_{lc}(n,d)$ and  $\cat{M}_h(n)$
are not rational (see Theorem \ref{thm:6.3}, and  Theorem \ref{thm:6.3.1}, respectively).
And finally we show that the moduli space $\cat{M}_{lc}(n,L)$ is rationally connected (see Corollary \ref{cor:6.7}).

\section{Preliminaries}
\label{Pre}

%
%

\subsection{Moduli spaces of holomorphic and logarithmic connections}
\label{Log-conn}
We recall the notion of  holomorphic and logarithmic connection
 in a 
holomorphic vector bundle over a smooth projective curve over $\C$, that is, over  
a compact Riemann surface.

Let $X$ be a compact Riemann surface of genus $g \geq 3$. 
Let $E$ be a holomorphic vector bundle over $X$.
A {\bf holomorphic connection} in $E$ is a $\C$-linear map
$$ D :  E \to E \otimes \Omega^1_{X}$$ 
which satisfies the Leibniz rule 
\begin{equation}
\label{eq:0.4}
D(f s)= f D(s) + df \otimes s,
\end{equation}
where $f$ is a local section of \struct{X} and $s$ is a 
local section of $E$.

A theorem due to Atiyah \cite{A} and Weil
\cite{W}, which is known 
as the \emph{Atiyah-Weil criterion}, says that a holomorphic vector bundle over a compact
Riemann surface admits a holomorphic connection if and only if the degree of each
indecomposable component of the holomorphic vector bundle is zero. 
Thus, if  $E$ admits a holomorphic connection, then
$$\deg{E} = 0.$$

The slope $\mu(E)$ of $E$ is defined as 
$$\mu(E) = \frac{\deg{E}}{\rk{E}}.$$ 

A holomorphic connection $D$ in $E$ is said to be
{\bf semistable} (respectively, {\bf stable}) if for every non-zero
proper subbundle $F$ of $E$ which is invariant under 
$D$, that is, 
$$D(F) \subset F \otimes \Omega^1_X,$$ we have,
\begin{equation*}
\mu(F) \leq 0 ~(\mbox{resp.}~ \mu(F) < 0),
\end{equation*}
where $\mu(E)$ denotes the 
slope of $E$. 

Let $\cat{M}_h(n)$ be the moduli space of semi-stable holomorphic 
connections of rank $n$. Then $\cat{M}_h(n)$ is a 
normal quasi-projective variety of dimension 
$2 n^2(g-1) +2$. Let 
$$\cat{M}_h^{sm}(n) \subset \cat{M}_h(n)$$ be the 
smooth locus of the variety.
Let 
\begin{equation}
\label{eq:m}
\cat{M}_h'(n) \subset \cat{M}_h^{sm}(n)
\end{equation}
 be the
open subvariety  
whose underlying vector bundle is stable. Then 
$\cat{M}_h'(n)$ is an irreducible smooth quasi-projective variety of the
same dimension as of $\cat{M}_h(n)$.

We  now define the logarithmic connection.
Fix a 
finite subset $$S = \{x_1, \ldots, x_m \} $$ of $X$  such 
that $x_i \neq  x_j$ for all $i \neq j$. 
Let   $$Z = x_1+ \cdots + x_m$$  denote the reduced effective divisor 
on $X$ associated to the finite set $S$. Let 
$\Omega^1_X(\log Z)$ denote the sheaf of 
logarithmic differential $1$-forms along $Z$,  see \cite{S} for more details. For the theory of 
the meromorphic and logarithmic connections, we refer to
two excellent sources \cite{D} and \cite{BM}.

A {\bf logarithmic connection}  on $E$ singular over $S$ is a $
\C$-linear map 
\begin{equation}
\label{eq:3}
D : E \to E \otimes \Omega^1_X(\log Z)  = E \otimes
\Omega^1_X \otimes \struct{X}(Z)  
\end{equation}
which satisfies the Leibniz identity
\begin{equation}
\label{eq:4}
D(f s)= f D(s) + df \otimes s,
\end{equation}
where $f$ is a local section of \struct{X} and $s$ is a 
local section of $E$.

A logarithmic connection $D$ in $E$ is said to be
{\bf semistable} (respectively, {\bf stable}) if for every non-zero
proper subbundle $F$ of $E$ which is invariant under 
$D$, that is, 
$$D(F) \subset F \otimes \Omega^1_X(\log Z),$$ we have,
\begin{equation*}
\mu(F) \leq \mu(E) (\mbox{resp.}~ \mu(F) < \mu(E)),
\end{equation*}
where $\mu(E)$ denotes the 
slope of $E$.

We next describe the notion of residues of a
logarithmic connection $D$ in $E$ singular over 
$S$.  We will denote the fibre of $E$ over any point $x 
\in X$ by $E(x)$.

Let $v \in E(x_\beta)$ be any vector in the fibre of $E$
over $x_\beta$. Let $U$ be an open set around $x_\beta$ and
$s: U \to E$ be a holomorphic section of $E$ over $U$ 
such that $s(x_\beta) = v$. Consider the following 
composition 
\begin{equation*}
\label{eq:5}
\Gamma(U,E) \to \Gamma(U, E \otimes\Omega^1_X \otimes 
\struct{X}(S)) \to E \otimes\Omega^1_X \otimes \struct{X}
(S)(x_\beta) = E(x_\beta),
\end{equation*}
where the equality is given because 
for any $x_\beta \in S$, the fibre $\Omega^1_X 
\otimes \struct{X}(S)(x_\beta)$ is canonically 
identified with $\C$ by sending a meromorphic form to 
its residue at $x_\beta$.
Then, we have an endomorphism on $E(x_\beta)$
sending $v$ to $D(s)(x_\beta)$. We need to check that 
this endomorphism is well defined. Let $s' : U  \to E$ be another holomorphic section  such that $s'(x_\beta) = v$. Then 
 $$(s - s')(x_\beta) = v - v = 0.$$
 Let $t$ be a local coordinate at $x_\beta$ on $U$ such that 
$t(x_\beta) = 0$, that is, the coordinate system $(U,t)$ is centered at $x_\beta$. 
Since  $ s - s' \in 
\Gamma(U,E)$ and  $(s - s')(x_\beta) = 0$, $s - s' = t  \sigma$ 
for some $\sigma \in \Gamma(U,E)$. Now,
\begin{align*}
D( s - s') = D(t \sigma)& = t D(\sigma) + dt \otimes \sigma \\
& = t D(\sigma) + t (\frac{dt}{t} \otimes \sigma),
\end{align*}
and hence $D( s - s')(x_\beta) = 0$, that is,
$D(s)(x_\beta) = D(s')(x_\beta)$.

 Thus, we have a well defined 
endomorphism, denoted by 
\begin{equation}
\label{eq:6}
Res(D,x_\beta) \in \ENd{E}(x_\beta) = \ENd{E(x_\beta)}
\end{equation}
that sends $v$ to $D(s)(x_\beta)$.  This endomorphism 
$Res(D,x_\beta)$ is called the \textbf{residue} of the logarithmic 
connection $D$ at the point $x_\beta \in S$ (see \cite{D} for the details). 

From \cite[Theorem 3]{O}, for a logarithmic connection
$D$ singular over $S$, we have
\begin{equation}
\label{eq:7}
\deg{E}  + \sum_{j=1}^m \tr{Res(D,x_j)} = 0,
\end{equation}
where $\deg{E}$ denotes the degree of $E$, and
$\tr{Res(D,x_j)}$ denotes the trace of the  
endomorphism $Res(D,x_j) \in \ENd{E(x_j)}$, for all 
$j =1, \ldots, m$.

Let $\mathbf{LC}(E)$ denote the set of all logarithmic 
connections in $E$ singular over $S$.
Then $\mathbf{LC}(E)$ is an affine space modelled 
over the vector space $\coh{0}{X}{\ENd{E} \otimes
\Omega^1_{X}(\log Z)}$, that is, if $D$ is any logarithmic connection in $E$ singular over $S$,
then 
\begin{equation*}
\mathbf{LC}(E) = D + \coh{0}{X}{\ENd{E} \otimes
\Omega^1_{X}(\log Z)}.
\end{equation*}
Recall that an endomorphism $\phi \in \ENd{E(x_j)}$
is said to be a rigid endomorphism if for every 
global endomorphism $\alpha \in \coh{0}{X}{\ENd{E}} $
we have 
$$\phi \circ \alpha(x_j) = \alpha(x_j) \circ \phi,$$
where $\alpha(x_j) : E(x_j) \to E(x_j)$ is an endomorphism.

In what follows, 
we fix a rigid endomorphism $\Phi_j \in \ENd{E(x_j)}$, 
for every $j = 1, \ldots, m$, such that for 
every direct summand $F \subset E$, we have 
\begin{equation}
\label{eq:8}
\deg{F} + \sum_{j = 1}^m \tr{\Phi_j\vert_{F(x_j)}} = 0.
\end{equation}
Here $\tr{\Phi_j\vert_{F(x_j)}}$ makes sense, because
from \cite[Theorem 1.3 (1)]{B},
for a rigid endomorphism $\Phi_j \in \ENd{E(x_j)}$, and for every direct summand $F$ of $E$,
we have $$\Phi_j(F(x_j)) \subset F(x_j).$$

Let $\mathbf{LC}(E; \Phi_1,\ldots, \Phi_m )$
denote the set of all logarithmic connections singular 
over $S$ with fixed residues $\Phi_j$ for all
$j = 1, \ldots, m$, that is,

$$\mathbf{LC}(E; \Phi_1, \ldots, \Phi_m)$$ 
$$= \{ D~\vert~D \mbox{ is a logarithmic connection in}~E~
\mbox{with}~Res(D,x_j) = \Phi_j~\mbox{for all}~ j = 1, \ldots, m  \}.$$
Then, 
$\mathbf{LC}(E;\Phi_1, \ldots, \Phi_m)$ is an 
affine space modelled over $\coh{0}{X}{\Omega^1_X \otimes \ENd{E}}$.

%
Notice the difference between vector spaces when 
residue is fixed and otherwise.

We impose some more conditions on the residues 
$\Phi_j$ for $1 \leq j \leq m$  to get a `well behaved' 
moduli space of logarithmic connections singular over 
$S$ with fixed residues.

Suppose that the residues (rigid endomorphisms) $\Phi_j$
for every $j = 1, \ldots, m$ satisfy the following
condition.
\begin{enumerate}
\item[(P1):] \label{P} For every non-zero subbundle $F \subset E$, $$\Phi_j(F(x_j)) \subset F(x_j), $$
and 
$$ \frac{\tr{\Phi_j
\vert_{F(x_j)}}}{\rk{F}} = \frac{\tr{\Phi_j}}{\rk{E}}.$$
\end{enumerate}


If we take $\Phi_j = \alpha_j \id{E(x_j)}$, where 
$\alpha_j \in \C$ and 
$\id{E(x_j)}$ is the identity morphism on $E(x_j)$,
for every $1 \leq j \leq m$,
then $\{\Phi_j\}_{1 \leq j \leq m}$ satisfies 
\mbox{(P1)}.  In what follows, we take $\Phi_j =
\alpha_j \id{E(x_j)}$ for every $j =1, \ldots, m$.

We have an easy result.

\begin{lemma}
\label{lem:2}
Let $D \in \mathbf{LC}(E;\Phi_1, \ldots, \Phi_m)$ with
$\{\Phi_j\}_{1 \leq j \leq m}$ satisfying \mbox{(P1)}.
Then $D$ is semi-stable. Moreover, if $(\deg E, \rk E) = 1$, then $D$ is stable.
\end{lemma}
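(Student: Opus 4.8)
The plan is to show that any $D$-invariant nonzero proper subbundle $F \subset E$ automatically satisfies the slope inequality $\mu(F) \le \mu(E)$, using the residue identity \eqref{eq:7} together with condition (P1). First I would observe that if $D(F) \subset F \otimes \Omega^1_X(\log Z)$, then $D$ restricts to a logarithmic connection $D|_F$ on $F$ singular over $S$, and its residue at each $x_j$ is exactly the restriction $\Phi_j|_{F(x_j)}$; this restriction makes sense because (P1) guarantees $\Phi_j(F(x_j)) \subset F(x_j)$. Applying the degree identity \eqref{eq:7} (i.e.\ \cite[Theorem 3]{O}) to the pair $(F, D|_F)$ gives
\[
\deg F + \sum_{j=1}^m \tr{\Phi_j|_{F(x_j)}} = 0,
\]
while applying it to $(E,D)$ gives $\deg E + \sum_{j=1}^m \tr{\Phi_j} = 0$.

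Next I would feed in the numerical part of (P1), namely $\tr{\Phi_j|_{F(x_j)}}/\rk F = \tr{\Phi_j}/\rk E$ for every $j$. Summing over $j$ and writing $c = \sum_{j=1}^m \tr{\Phi_j}$, this says $\sum_j \tr{\Phi_j|_{F(x_j)}} = (\rk F/\rk E)\, c$. Substituting into the two degree identities yields $\deg F = -(\rk F/\rk E)\,c$ and $\deg E = -c$, hence $\deg F = (\rk F/\rk E)\deg E$, i.e.\ $\mu(F) = \mu(E)$ exactly. In particular the required inequality $\mu(F) \le \mu(E)$ holds for every $D$-invariant subbundle, so $D$ is semistable. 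This is really the whole content; there is no genuine obstacle, only the bookkeeping of checking that the residue of the restricted connection is the restricted residue, which is immediate from the local description in \eqref{eq:5}--\eqref{eq:6}.

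For the "moreover" clause, suppose $(\deg E, \rk E) = 1$ and suppose, for contradiction, that some nonzero proper $D$-invariant subbundle $F$ violates stability, i.e.\ $\mu(F) \ge \mu(E)$. By the computation just made, every such $F$ in fact has $\mu(F) = \mu(E)$, which forces $\rk E \mid \rk F \cdot \deg E$; since $\gcd(\deg E, \rk E) = 1$ this gives $\rk E \mid \rk F$, impossible for $0 < \rk F < \rk E$. Hence no $D$-invariant nonzero proper subbundle exists at all, and $D$ is (vacuously) stable. I would present this last step as a short coprimality argument; it requires nothing beyond the equality $\mu(F) = \mu(E)$ established above, so again there is no real difficulty — the lemma is "easy" precisely because (P1) was designed to make the slope of every invariant subbundle equal to that of $E$.
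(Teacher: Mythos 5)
Your proof is correct, and it is exactly the argument the paper has in mind (the lemma is stated there without proof as an ``easy result''): restrict $D$ to an invariant subbundle $F$, apply Ohtsuki's residue formula \eqref{eq:7} to $(F,D|_F)$ and to $(E,D)$, and use the trace-ratio condition in (P1) to get $\mu(F)=\mu(E)$, with the coprimality argument then ruling out any invariant proper subbundle at all. The only cosmetic remark is that $D$-invariance of $F$ already forces $Res(D,x_j)=\Phi_j$ to preserve $F(x_j)$, so the containment clause of (P1) is not strictly needed for the restriction step, only the numerical clause is.
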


%

A logarithmic connection $D$ in a holomorphic vector 
bundle $E$ 
is called \textbf{irreducible}  if for any 
holomorphic  subbundle $F$
of $E$ with 
$D(F) \subset F \otimes \Omega^1_X(\log Z)$, then either 
$F = E$ or 
$F = 0$.

If $D \in \mathbf{LC}(E;\Phi_1, \ldots, \Phi_m)$
satisfies \mbox{(P1)}, and $(\deg{E}, \rk{E}) = 1$,
then $D$ is irreducible.

Let $\cat{M}_{lc}(n,d)$ denote the moduli space which 
parametrizes the isomorphic class of  pairs $(E,D)$,
where, by a pair $(E,D)$ we mean that
\begin{enumerate}
\item \label{a} $E$ is a holomorphic vector bundle of 
rank $n$ and degree $d$ over $X$, such that 
$(n , d) = 1$.
\item \label{b} $D$ is a logarithmic connection with fixed 
residues $Res(D, x_j) = \Phi_j$ satisfying
\mbox{(P1)}.
\end{enumerate}
Two pairs $(E, D)$ and $(E', D')$ satisfying the above 
conditions \eqref{a} and \eqref{b} are said to be isomorphic if 
there exists an isomorphism $\Psi : E \to E'$
such that the following diagram
\begin{equation*}
\xymatrix{
E \ar[d]^{\Psi} \ar[r]^D & E \otimes \Omega^1_X(\log Z) \ar[d]^{\Psi \otimes \id{\Omega^1_X(\log Z)}} \\
E' \ar[r]^{D'} & E' \otimes \Omega^1_X(\log Z)\\
}
\end{equation*}
commutes.

From \cite[Theorem 3.5]{N}, the moduli space  $\cat{M}_{lc}(n,d)$ is a separated 
quasi-projective 
scheme over $\C$. 
Since $\{\Phi_j \}_{1 \leq j \leq m}$ satisfies 
\eqref{eq:8},  from \cite[Theorem 1.3 (2)]{B} the moduli space 
$\cat{M}_{lc}(n,d)$ is non-empty.

As we have observed  that every logarithmic connection 
$(E,D)$ in $\cat{M}_{lc}(n,d)$ is irreducible, and 
the singular points of  $\cat{M}_{lc}(n,d)$ 
correspond to reducible logarithmic connections 
\cite[p.n. 790]{BR1}, the moduli
space $\cat{M}_{lc}(n,d)$ is smooth. 
Since genus $g$ of $X$ is greater than or equal to $3$,  the moduli space 
$\cat{M}_{lc}(n,d)$ is irreducible \cite[Theorem 11.1]{S2}. 

Altogether, $\cat{M}_{lc}(n,d)$ is an irreducible smooth
quasi-projective variety of dimension $2 n^2(g-1) + 2$.
Let 
\begin{equation}
\label{eq:m1}
\cat{M}'_{lc}(n,d) \subset \cat{M}_{lc}(n,d)
\end{equation}
be the 
 moduli space of logarithmic connections whose underlying 
 vector bundles are stable. Then, from \cite[p.635, Theorem 2.8(A)]{Ma}  $\cat{M}'_{lc}(n,d)$ 
 is  an open subset of $\cat{M}_{lc}(n,d)$, and hence 
 an irreducible smooth
quasi-projective variety of dimension $2 n^2(g-1) + 2$.

Fix a holomorphic line bundle $L$ over $X$ of degree 
$d$, and a logarithmic connection $D_{L}$ on $L$ 
singular over $S$ with residues $Res(D_L, x_j) = 
\tr{\Phi_j}$ for all $j = 1, \ldots, m$. 
Let 
\begin{equation}
\label{eq:m2}
\cat{M}_{lc}(n,L) \subset \cat{M}_{lc}(n,d)
\end{equation}
be 
the moduli space  parametrising isomorphism class of 
pairs $(E,D)$ such that $$(\bigwedge^n E, \tilde{D}) 
\cong (L,D_L),$$ where $\tilde{D}$ is the logarithmic 
connection on $\bigwedge^n E$ induced by $D$. Then,
 $\cat{M}_{lc}(n,L)$ is an irreducible smooth 
 quasi-projective variety of dimension 
 $2 (n^2-1)(g-1)$.

Let 
\begin{equation}
\label{eq:m3}
\cat{M}_{lc}'(n,L) \subset \cat{M}_{lc}(n,L)
\end{equation}
be the moduli space of logarithmic connections $(E,D)$
with  $E$ stable.

\section{Chow group of the moduli spaces}
\label{Chow-conn}
In this section, we determine the Chow groups of the 
moduli spaces $\cat{M}'_{lc}(n,L)$, $\cat{M}'_{lc}(n,d)
$, $\cat{M}'_h(n)$ and $\cat{M}'_h(n,L_0)$.

Before that we recall the definition of 
Chow group of a quasi-projective scheme over a field
(see \cite{F} and \cite{Vo2}).

Let $\cat{X}$ be a quasi-projective scheme over a field $K$.
Let $Z_k(\cat{X})$ be  the free abelian group generated 
by the reduced and irreducible $k$-dimensional closed 
subvarieties of $\cat{X}$, or we can say the 
free abelian group generated by  $k$-dimensional 
closed integral subschemes of $\cat{X}$.
An element of $Z_k(\cat{X})$ is called a $k$-dimensional 
algebraic cycle on $\cat{X}$.

Let $f \in K(\cat{X})^*$. Then, we have a divisor
$\dv{f}$ on $\cat{X}$ associated to the non-zero 
rational function $f$ on $\cat{X}$.

A $k$-cycle $\alpha$ is 
{\bf rationally equivalent to zero},
written $\alpha \sim 0$, if there is a  finite number 
of $(k+1)$-dimensional subvarieties (that is, closed integral subschemes) $W_i$ of 
$\cat{X}$ and $f_i \in K(W_i)^*$, such that 
\begin{equation*}
\alpha = \sum_i \dv{f_i}.
\end{equation*}
Since $0 = \dv{1}$ and $\dv{f^{-1}}  = - \dv{f}$,
the cycles rationally equivalent to zero form a 
subgroup $Z_k(\cat{X})_{rat}$ of $Z_k(\cat{X})$.

We define the quotient group $$\G[k]{\cat{X}} :=  Z_k(\cat{X}) / 
Z_k(\cat{X})_{rat},$$ and call it the Chow group of $k$-cycles 
on $\cat{X}$. A graded sum is denoted by
$$\G[*]{\cat{X}} = \bigoplus_{k = 0}^{\dim{\cat{X}}} \G[k]{\cat{X}}.$$
The Chow group of $k$-cycles on $\cat{X}$ with rational 
coefficients will be denoted by $\CG[k]{\Q}{\cat{X}}$.

Let $\cat{U}^s(n, L)$ be the moduli space of stable 
vector bundles of rank $n$ with $\bigwedge^n E \cong L$. Then 
$\cat{U}^s(n,L)$ is a smooth projective variety 
of dimension $(n^2-1)(g-1)$, as we have assumed 
$n$ and $\deg(L) = d$ are coprime.

Let $x_0 \in X$, and $\struct{X}(x_0)$ the line
bundle on $X$ associated with the reduced effective divisor $x_0$.
For $n = 2$, we have 
 $\cat{U}^s(2,\struct{X}(x_0))$ the moduli space of stable 
 vector bundles of rank $2$ over $X$ whose determinant 
 is $\struct{X}(x_0)$.

In \cite{BKN}, it was shown that 
\begin{equation}
\label{eq:11}
\CG[3g-5]{\Q}{\cat{U}^s(2,\struct{X}(x_0))} \cong
\CG[0]{\Q}{X} \bigoplus \Q.
\end{equation}

In \cite{CH}, Choe and Hwang computed the Chow group
of $1$-cycles on  $\cat{U}^s(2,\struct{X}(x_0))$, and they 
proved that
\begin{equation}
\label{eq:12}
\CG[1]{\Q}{\cat{U}^s(2,\struct{X}(x_0))} \cong 
\CG[0]{\Q}{X}.
\end{equation}

Let $M$ be a holomorphic line bundle over $X$ of 
degree $d'$ and 
$$M_0 = \struct{X}(x_0) \otimes M^{\otimes 2}.$$
Then $\deg(M_0) = 2 d' + 1$.
Define a map 
\begin{equation*}
\Psi_M : \cat{U}^s(2, \struct{X}(x_0)) \to \cat{U}^s(2, M_0)
\end{equation*} 
by 
\begin{equation*}
\Psi_M ([E]) = [E \otimes M].
\end{equation*}
The map is well-defined, and in fact, an isomorphism
of varieties.

Thus, the above results $\eqref{eq:11}$ and 
$\eqref{eq:12}$ are true for $\cat{U}(2, L)$, where
$\deg(L)$ is odd.

Define
\begin{equation}
\label{eq:12.5}
p: \cat{M}_{lc}'(n, L) \to \cat{U}^s(n, L)
\end{equation}
by  sending $(E, D) \mapsto E$, that is,
$p$ is the forgetful map which forgets its 
logarithmic structure.

For every $E \in \cat{U}(n,L)$, $p^{-1}(E)$ is an 
affine space modelled over 
 $\coh{0}{X}{\Omega^1_X \otimes \ad{E}}$,
 where $\ad{E} \subset \ENd{E}$ is the 
 subbundle consisting of endomorphisms of $E$ whose trace 
 is zero. Actually, 
 $p$ is a fibre bundle and  using Riemann-Roch theorem and Serre duality
 it can be easily computed that the dimension of 
 $p^{-1}(E)$ is $(n^2-1)(g-1)$.

Let $\Omega^1_{\cat{U}^s(n,L)}$ denote the holomorphic cotangent bundle on $\cat{U}^s(n,L)$. Since $n$ and $d$ are coprime, there exists a universal bundle $\cat{E}$ on $\cat{U}^s(n,L) \times X$. Let $p_1 : \cat{U}^s(n,L) \times X \to \cat{U}^s(n,L)$ and $p_2 : \cat{U}^s(n,L) \times X \to X$ be the first and second projection
respectively. Then, from infinitesimal deformation theory  and local property of moduli space, we have 
 $R^1 p_1 (\ad{\cat{E}}) = T \cat{U}^s(n,L)$. Moreover,
 we have ${p_1}_*(\ad{\cat{E}} \otimes p_2^*(\Omega^1_X)) = \Omega^1_{\cat{U}^s(n,L)}$. 
Now, from the fact that $\mathbf{LC}(E;\Phi_1, \ldots, \Phi_m)$ is an 
affine space modelled over $\coh{0}{X}{\Omega^1_X \otimes \ENd{E}}$ as observed above, we have 

\begin{lemma}
\label{lem:1}
 $\cat{M}_{lc}'(n,L)$ is an 
$\Omega^1_{\cat{U}^s(n,L)}$-torsor over $\cat{U}^s(n,L)$.
\end{lemma}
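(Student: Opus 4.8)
The goal is to show that the forgetful morphism $p : \cat{M}_{lc}'(n,L) \to \cat{U}^s(n,L)$ of \eqref{eq:12.5} carries the structure of a torsor under the cotangent bundle $\Omega^1_{\cat{U}^s(n,L)}$. I would proceed by packaging the three ingredients already assembled in the text: (i) the fibrewise description, that $p^{-1}(E) = \mathbf{LC}(E;\Phi_1,\dots,\Phi_m)$ is an affine space modelled on the vector space $\coh{0}{X}{\Omega^1_X \otimes \ENd{E}}$ — and in fact, because the residues are scalar (we took $\Phi_j = \alpha_j\id{E(x_j)}$, which are rigid), the space of residue-preserving deformations is modelled on $\coh{0}{X}{\Omega^1_X \otimes \ad{\cat{E}}}$; (ii) the identification, recalled just before the lemma, of $\Omega^1_{\cat{U}^s(n,L)}$ with ${p_1}_*(\ad{\cat{E}} \otimes p_2^*\Omega^1_X)$, so that the fibre of $\Omega^1_{\cat{U}^s(n,L)}$ at $[E]$ is exactly $\coh{0}{X}{\ad{E}\otimes\Omega^1_X}$; (iii) the fact that $p$ is a fibre bundle (stated in the discussion around \eqref{eq:12.5}). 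Combining (i) and (ii), the fibre $p^{-1}([E])$ is an affine space over the fibre $\Omega^1_{\cat{U}^s(n,L)}([E])$; what remains is to make this a torsor structure that varies algebraically/holomorphically over the base.

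Concretely, the first step is to define the action morphism $a : \Omega^1_{\cat{U}^s(n,L)} \times_{\cat{U}^s(n,L)} \cat{M}_{lc}'(n,L) \to \cat{M}_{lc}'(n,L)$ by $(\eta, (E,D)) \mapsto (E, D + \eta)$, where $\eta \in \coh{0}{X}{\ad{E}\otimes\Omega^1_X}$ is viewed as a logarithmic $\ENd$-valued one-form (holomorphic, hence a fortiori logarithmic), and $D+\eta$ again satisfies the Leibniz rule and has the same residues $\Phi_j$ since $\eta$ is holomorphic and traceless. Then I would check the torsor axioms: that $a$ is an action (compatible with addition in $\Omega^1$), and that the "difference" map $(d_1, d_2) \mapsto d_1 - d_2$ gives a well-defined morphism $\cat{M}_{lc}'(n,L) \times_{\cat{U}^s(n,L)} \cat{M}_{lc}'(n,L) \to \Omega^1_{\cat{U}^s(n,L)}$ which is inverse, in the appropriate sense, to $a$ — i.e. $a$ together with the second projection induces an isomorphism $\Omega^1_{\cat{U}^s(n,L)} \times_{\cat{U}^s(n,L)} \cat{M}_{lc}'(n,L) \xrightarrow{\sim} \cat{M}_{lc}'(n,L) \times_{\cat{U}^s(n,L)} \cat{M}_{lc}'(n,L)$. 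Since $\cat{M}_{lc}'(n,L) \to \cat{U}^s(n,L)$ is nonempty on each fibre (the moduli space is nonempty by \cite[Theorem 1.3 (2)]{B}, and surjectivity of $p$ follows because any stable $E$ admits a logarithmic connection with the prescribed scalar residues when \eqref{eq:7} is arranged), this is enough.

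The one genuine point requiring care — the step I expect to be the main obstacle — is the algebraicity/holomorphy of the torsor structure across the base, i.e. promoting the pointwise affine-space statement to a statement about morphisms of schemes (or complex varieties). The clean way to do this is to exhibit both $\cat{M}_{lc}'(n,L)$ and the action via a universal/relative construction on $\cat{U}^s(n,L) \times X$: a logarithmic connection on the universal bundle relative to $\cat{U}^s(n,L)$ is a splitting of an Atiyah-type exact sequence of sheaves on $\cat{U}^s(n,L) \times X$, the sheaf of relative logarithmic connection data is a torsor under $p_{1*}(\ad{\cat{E}} \otimes p_2^*\Omega^1_X(\log Z)) = p_{1*}(\ad{\cat{E}} \otimes p_2^*\Omega^1_X)$ (the last equality because pushing forward along a proper map of relative dimension one kills the twist by the finite reduced divisor pulled back from $X$, using that $\ad{\cat{E}}$ is fibrewise of slope zero — or more simply because residue-fixing at scalar $\Phi_j$ forces the polar part to vanish in $\ad$), and this last sheaf is precisely $\Omega^1_{\cat{U}^s(n,L)}$ by (ii). Granting this relative picture, $\cat{M}_{lc}'(n,L)$ is identified with the total space of this torsor sheaf, the action morphism is the tautological one, and the torsor axioms hold by construction. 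I would then simply remark that the underlying set-theoretic fibres agree with $\mathbf{LC}(E;\Phi_1,\dots,\Phi_m)$ by the discussion in Section~\ref{Pre}, completing the proof.
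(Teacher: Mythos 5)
Your proposal is correct and follows essentially the same route as the paper, which deduces the lemma directly from the two facts you assemble: each fibre $p^{-1}(E)$ (with the residues and the determinant data fixed) is an affine space modelled on $\coh{0}{X}{\Omega^1_X\otimes \ad{E}}$, and ${p_1}_*(\ad{\cat{E}}\otimes p_2^*\Omega^1_X)=\Omega^1_{\cat{U}^s(n,L)}$, so your explicit action morphism, torsor axioms and relative construction only spell out what the paper leaves implicit. One small caveat: your first justification for the identification $ {p_1}_*(\ad{\cat{E}}\otimes p_2^*\Omega^1_X(\log Z))={p_1}_*(\ad{\cat{E}}\otimes p_2^*\Omega^1_X)$ (that the pushforward ``kills the twist'' by the divisor) is not correct, since the log-twisted pushforward is genuinely larger; but your alternative argument, that fixing the residues forces the polar parts of the difference of two connections to vanish, is the right one and is all that is needed.
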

 
We state two standard lemmas from the theory of 
Chow groups which we will use to compute the
Chow groups of moduli spaces.

Let $Y$ be a variety over a field $K$.
Let $i: F \to Y$ be the inclusion of a closed subscheme.
Let $j : U = Y \setminus F \to Y$ be the inclusion 
of the complement.
Since $j$ is an open immersion, it is flat, and 
$i$ is a closed immersion, it is proper.
Therefore, we have morphisms 
 $j^*: \G[k]{Y} \to \G[k]{U}$ and 
$i_*: \G[k]{F} \to \G[k]{Y}$ of Chow groups and 
$j^* \circ i_* = 0$, since the cycles supported on 
$F$ do not intersect $U$.
Thus, we have what is called {\bf localisation sequence}.
\begin{lemma}\cite[Lemma 9.12]{Vo2}
\label{lem:3}
The following sequence of abelian groups 
\begin{equation}
\label{eq:14}
\G[l]{F} \xrightarrow{i_*} \G[l]{Y} \xrightarrow{j^*}
\G[l]{U} \rightarrow 0.
\end{equation}
is exact for every $l = 0, \ldots, \dim{Y}$.
\end{lemma}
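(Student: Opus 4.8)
The plan is to establish exactness of \eqref{eq:14} at all three spots, working first with the groups of cycles $Z_l(-)$ and only afterwards passing to rational equivalence. Throughout I use that $j$ is flat (an open immersion), so $j^*$ is defined already on cycles by $j^{*}[V] = [j^{-1}(V)]$, and that $i$ is proper (a closed immersion), so $i_*$ is defined on cycles, sending an $l$-dimensional subvariety of $F$ to the same subvariety regarded inside $Y$.

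First I would prove surjectivity of $j^*$: if $V \subset U$ is an $l$-dimensional subvariety, then its closure $\closure{V}$ in $Y$ is an $l$-dimensional subvariety with $j^{-1}(\closure{V}) = V$, whence $j^{*}[\closure{V}] = [V]$. So even $Z_l(Y) \to Z_l(U)$ is onto, and a fortiori $j^* : \G[l]{Y} \to \G[l]{U}$ is surjective. The relation $j^* \circ i_* = 0$ is immediate and was already noted above: a cycle pushed forward from $F$ is disjoint from $U$, so its preimage in $U$ is the zero cycle.

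The heart of the matter is exactness in the middle. Suppose $\alpha \in Z_l(Y)$ with $j^*\alpha$ rationally equivalent to zero on $U$; I must show that $[\alpha]$ lies in the image of $i_*$. Write $j^*\alpha = \sum_i \mathrm{div}_{W_i}(f_i)$, where each $W_i \subset U$ is an $(l+1)$-dimensional subvariety and $f_i \in K(W_i)^*$. Let $\closure{W_i} \subset Y$ be the closure of $W_i$; since $W_i$ is dense in $\closure{W_i}$ we have $K(\closure{W_i}) = K(W_i)$, so $f_i$ is also a nonzero rational function on $\closure{W_i}$, and $\mathrm{div}_{\closure{W_i}}(f_i)$ is a well-defined element of $Z_l(Y)$. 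The compatibility to check is
\begin{equation*}
j^{*}\bigl(\mathrm{div}_{\closure{W_i}}(f_i)\bigr) = \mathrm{div}_{W_i}(f_i).
\end{equation*}
Indeed, a codimension-one subvariety of $\closure{W_i}$ either meets $U$, in which case it is the closure of a codimension-one subvariety $Z$ of $W_i$ and the order of vanishing of $f_i$ along it, computed in the one-dimensional local ring $\sstalk{\closure{W_i}}{\closure{Z}}$, equals the order computed in $\sstalk{W_i}{Z}$ because these two local rings coincide (the generic point of $\closure{Z}$ lies in the open subscheme $W_i$); or it is contained in $F$, in which case it does not contribute to $j^*$. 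Granting this, put $\beta = \alpha - \sum_i \mathrm{div}_{\closure{W_i}}(f_i) \in Z_l(Y)$. Then $\beta \sim \alpha$ in $Y$ and $j^*\beta = 0$. A cycle in $Z_l(Y)$ with vanishing restriction to $U$ has every component contained in $F$, hence equals $i_*\gamma$ for a unique $\gamma \in Z_l(F)$. Therefore $[\alpha] = [\beta] = i_*[\gamma]$ in $\G[l]{Y}$, which is exactly exactness at $\G[l]{Y}$.

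The only genuine obstacle is the displayed compatibility of $\mathrm{div}$ with the open restriction $j^*$ — in particular the identification of orders of vanishing through the equality of the relevant one-dimensional local rings; this is the standard fact that flat pull-back (and proper push-forward) is compatible with rational equivalence, for which I would refer to \cite[Chapter~1]{F}, and it is the reason the statement is quoted here rather than proved from scratch. Everything else is the bookkeeping indicated above.
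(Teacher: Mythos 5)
Your proof is correct, and it is precisely the standard argument behind the localisation sequence (Fulton, \emph{Intersection Theory}, Proposition 1.8, which is also what Voisin's Lemma 9.12 reproduces): surjectivity by taking closures, $j^*\circ i_*=0$ by disjointness of supports, and exactness in the middle by extending the rational functions $f_i$ to the closures $\closure{W_i}$ and checking that $\mathrm{div}$ commutes with restriction to the open set via the equality of the relevant one-dimensional local rings. The paper itself offers no proof — it simply cites the result — so your write-up supplies the standard proof that the citation points to, and it does so correctly.
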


\begin{theorem}\cite[Theorem  9.25]{Vo2}
\label{thm:1}
Let $\pi: \p(E) \to Y$ be a projective bundle with 
rank $\rk{E} = r$.
Then the map
\begin{equation}
\label{eq:15}
\bigoplus_{k = 0}^{r-1} h^k \pi^* :
\bigoplus_{k = 0}^{r-1} \G[l-r+1+k]{Y} \to \G[l]{\p(E)}
\end{equation}
is an isomorphism, where $h \in \Pic{\p(E)}$ denotes the class of the tautological line 
bundle $\struct{\p(E)}(1)$.
\end{theorem}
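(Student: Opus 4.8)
The statement is the projective bundle formula for Chow groups (Voisin, \cite[Theorem 9.25]{Vo2}); the plan is to reproduce the classical argument, cf.\ \cite{F}: prove surjectivity of the map $\theta := \bigoplus_{k=0}^{r-1} h^{k}\pi^{*}$ by Noetherian induction through the localisation sequence, and prove injectivity by constructing left inverses of $\theta$ out of proper pushforward along $\pi$. First I would reduce to the case that $Y$ is integral: both Chow groups sit in the localisation sequences attached to the irreducible components of $Y$ and their pairwise intersections, $\theta$ is compatible with flat pullback and with proper pushforward (for the latter one uses that $\p(E)\to Y$ is the base change of $\p(E|_{Z})\to Z$ along a closed immersion, together with the compatibility of proper pushforward and flat pullback for a cartesian square), so the general case follows from the integral one by a diagram chase. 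The base case $Y=\Spec{K}$ is the assertion that $\G[k]{\p^{r-1}_{K}}\cong\Z$ for $0\le k\le r-1$, generated by the class of a linear subspace, i.e.\ by $h^{r-1-k}$: surjectivity follows by projecting a $k$-dimensional subvariety of $\p^{r-1}_{K}$ from a general point not lying on it (an induction on $r$ and on degree shows the subvariety is rationally equivalent to a multiple of a linear $k$-plane), and the class is then determined by its degree. This is exactly the assertion that $\theta$ is an isomorphism over a point.

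For surjectivity when $Y$ is integral, choose a dense open $U\subset Y$ over which $E$ is free, with closed complement $Z$ (so $\dim{Z}<\dim{Y}$), and compare the localisation sequences
\begin{equation*}
\G[l]{\p(E|_{Z})}\to\G[l]{\p(E)}\to\G[l]{\p(E|_{U})}\to 0,\qquad \G[l]{Z}\to\G[l]{Y}\to\G[l]{U}\to 0
\end{equation*}
via $\theta$; both rows are exact by Lemma \ref{lem:3}. By Noetherian induction on $\dim{Y}$ the left vertical map $\theta_{Z}$ is surjective. Over $U$ one has $\p(E|_{U})\cong U\times\p^{r-1}$, and surjectivity of the corresponding map $\theta_{U}$ follows from a secondary induction on $r$: applying the localisation sequence of the hyperplane inclusion $U\times\p^{r-2}\hookrightarrow U\times\p^{r-1}$ with open complement $U\times\A^{r-1}$, together with the homotopy invariance $\G[l]{U\times\A^{r-1}}\cong\G[l-r+1]{U}$ and the projection formula for that inclusion, shows every class of $\G[l]{U\times\p^{r-1}}$ lies in the image of $\theta_{U}$. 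A chase of the commutative diagram then yields surjectivity of $\theta$.

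For injectivity, recall that $h=c_{1}(\struct{\p(E)}(1))$ acts on Chow groups and that properness of $\pi$ gives $\pi_{*}\colon\G[*]{\p(E)}\to\G[*]{Y}$. The projection formula together with the fibrewise computation of the base case gives
\begin{equation*}
\pi_{*}\!\left(h^{r-1+i}\cdot\pi^{*}\beta\right)=s_{i}(E)\cdot\beta ,
\end{equation*}
where $s_{i}(E)$ is the $i$-th Segre class of $E$, with $s_{0}(E)=1$ and $s_{i}(E)=0$ for $i<0$. Hence, if $\sum_{k=0}^{r-1}h^{k}\pi^{*}\alpha_{k}=0$, then applying $\pi_{*}\!\left(h^{r-1-k}\cdot(-)\right)$ successively for $k=r-1,r-2,\dots,0$ produces the linear system $\alpha_{k}+\sum_{j>k}s_{j-k}(E)\cdot\alpha_{j}=0$, which is unitriangular and forces $\alpha_{k}=0$ for every $k$.

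I expect the surjectivity step to be the main obstacle: one has to interlock the Noetherian induction on $\dim{Y}$ with the sub-induction on the rank for $U\times\p^{r-1}$, and then carry out the diagram chase carefully, because the left-hand terms of the two localisation sequences need not be injective, so the argument can only be made at the level of the surjections onto the quotients $\G[l]{\p(E|_{U})}$ and $\G[l]{U}$. By contrast, once proper pushforward on Chow groups, its compatibility with flat pullback, and the computation in a single fibre are in place, the injectivity step is purely formal.
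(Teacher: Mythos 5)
The paper gives no argument of its own here—the statement is quoted directly from Voisin \cite[Theorem 9.25]{Vo2}—and your proposal reproduces exactly the standard proof found there and in Fulton \cite{F}: surjectivity by Noetherian induction through the localisation sequence after trivialising $E$ over a dense open subset (with the sub-induction on $r$ and homotopy invariance for $U\times\A^{r-1}$), and injectivity from $\pi_*\bigl(h^{r-1+i}\cdot\pi^*\beta\bigr)=s_i(E)\cdot\beta$ with $s_0(E)=1$, $s_i(E)=0$ for $i<0$, which makes the resulting system unitriangular. Your argument is correct and is essentially the same as the cited source's proof.
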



\begin{theorem}
\label{thm:2} 
For every $ 0 \leq l \leq  (n^2-1)(g-1)$, 
we have a canonical isomorphism 
\begin{equation}
\label{eq:15.1}
\G[l+(n^2-1)(g-1)]{\cat{M}_{lc}'(n,L)} \cong
\G[l]{\cat{U}^s(n,L)}.
\end{equation}
\end{theorem}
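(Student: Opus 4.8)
The plan is to exploit Lemma \ref{lem:1}, which says $\cat{M}_{lc}'(n,L)$ is an $\Omega^1_{\cat{U}^s(n,L)}$-torsor over the smooth projective variety $\cat{U}^s(n,L)$, and to compute its Chow groups by comparing with the associated projective bundle. Write $N = \cat{U}^s(n,L)$, $r = (n^2-1)(g-1) = \rk{\Omega^1_N} = \dim N$, and let $E = \cat{M}_{lc}'(n,L)$, a rank-$r$ affine bundle (torsor) over $N$. First I would form the vector bundle $V := \Omega^1_N \oplus \struct{N}$ and its projective completion $\bar{E} := \p(V) \to N$, inside which the total space of the torsor $E$ sits as a Zariski-open subset: namely $\bar E$ contains the affine bundle $\Omega^1_N$ (the locus where the $\struct{N}$-coordinate is nonzero) as an open subset with closed complement the ``hyperplane at infinity'' $\p(\Omega^1_N) \hookrightarrow \p(V)$, and the torsor $E$ is (Zariski-locally on $N$, hence globally as a variety over $N$) isomorphic to $\Omega^1_N$. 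Concretely, since $E$ is a torsor under the vector bundle $\Omega^1_N$, its class lies in $H^1(N,\Omega^1_N)$; regardless of whether this class vanishes, the total space of $E$ is an $\A^r$-bundle over $N$ in the Zariski topology, and one can realize it as an open subvariety of a $\p^r$-bundle over $N$ with complement a $\p^{r-1}$-bundle over $N$ — either by the $\p(V)$ construction above when the torsor is trivial, or more robustly by taking the fiberwise projective closure of $E$ itself. I will take the second route: let $\bar E$ be a smooth projective variety fitting in $\bar E \to N$ a $\p^r$-bundle, containing $E$ as a dense open with closed complement $E_\infty \to N$ a $\p^{r-1}$-bundle.

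Next I would run the localisation sequence (Lemma \ref{lem:3}) for the pair $E_\infty \hookrightarrow \bar E \hookleftarrow E$: for each $l$,
\begin{equation*}
\G[l]{E_\infty} \xrightarrow{i_*} \G[l]{\bar E} \xrightarrow{j^*} \G[l]{E} \rightarrow 0.
\end{equation*}
Then apply the projective bundle formula (Theorem \ref{thm:1}) to both $\bar E = \p^r$-bundle over $N$ and $E_\infty = \p^{r-1}$-bundle over $N$, expressing $\G[*]{\bar E}$ and $\G[*]{E_\infty}$ as sums of copies of $\G[*]{N}$ twisted by powers of the relevant tautological classes. Comparing the two via the Gysin map $i_*$, the classes of $E_\infty$ (the ``lower'' powers $h^0,\dots,h^{r-1}$ of the hyperplane class) get killed in the quotient, leaving exactly one copy of $\G[*]{N}$ surviving in $\G[*]{E}$, shifted in degree by $r = (n^2-1)(g-1)$: precisely, $\G[l+r]{E} \cong \G[l]{N}$ for $0 \le l \le r$, which is \eqref{eq:15.1}. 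This is the standard computation of Chow groups of an affine bundle (homotopy invariance: $\G[*]{E} \cong \G[*-r]{N}$ with the degree convention $\dim E = \dim N + r$), and the canonicity of the isomorphism comes from the flat pullback $p^*$ along $p : E \to N$ being the inverse of the degree-shift.

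The main obstacle — really the only point needing care — is handling the torsor structure honestly: $\cat{M}_{lc}'(n,L)$ need not be the total space of the vector bundle $\Omega^1_N$ itself, only a torsor under it, so I cannot literally invoke ``$\G[*]{}$ of a vector bundle.'' I would resolve this by noting that any torsor under a vector bundle on a variety is Zariski-locally trivial (affine bundles have no local cohomology obstruction in the Zariski topology — the obstruction lives in $H^1_{\mathrm{Zar}}(N,\Omega^1_N)$, which is a vector space, and on any affine open the torsor is trivial), hence is itself an $\A^r$-bundle admitting a fiberwise projective completion $\bar E \to N$ that is a Zariski-locally-trivial $\p^r$-bundle; Theorem \ref{thm:1} applies to such bundles. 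Alternatively, and perhaps cleaner for exposition: the homotopy-invariance statement $\G[k]{E} \cong \G[k-r]{N}$ for $E \to N$ any affine bundle of rank $r$ is itself a standard consequence of Lemma \ref{lem:3} and Theorem \ref{thm:1} (cf. \cite{F} or \cite{Vo2}), applied to any such completion, and I would simply cite it in this form, then set $k = l + r$ and identify $\G[(l+r) - r]{N} = \G[l]{N} = \G[l]{\cat{U}^s(n,L)}$ to obtain \eqref{eq:15.1}. The range $0 \le l \le (n^2-1)(g-1)$ is exactly the range in which $\G[l]{N}$ can be nonzero, since $\dim N = (n^2-1)(g-1)$.
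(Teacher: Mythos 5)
Your proposal is correct and takes essentially the same route as the paper: both view $\cat{M}_{lc}'(n,L)$, via Lemma \ref{lem:1}, as an affine bundle of rank $(n^2-1)(g-1)$ over $\cat{U}^s(n,L)$, complete it fibrewise to a projective bundle whose complement is a hyperplane-at-infinity projective bundle (the paper does this through the inclusion of the affine group in $\mathrm{GL}(r+1,\C)$), and then combine the localisation sequence of Lemma \ref{lem:3} with the projective bundle formula of Theorem \ref{thm:1} to extract the degree-shifted isomorphism. Your added remark on the Zariski-local triviality of the $\Omega^1_{\cat{U}^s(n,L)}$-torsor only makes explicit a point the paper leaves implicit.
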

\begin{proof}
  
Let $\cat{G}$ be an affine bundle modelled on a vector bundle $\cat{H}$ of rank $r = (n^2-1)(g-1)$ over $\cat{U}^s(n,L)$. Now, using the standard inclusion of the affine group in $\text{GL}(r+1, \C)$, we obtain a 
vector bundle $\cat{F}$ of rank $r +1$ together with an 
embedding of $\cat{G}$ in $\p (\cat{F})$ as an open 
subset with complement $\p (\cat{H}^ \vee)$.

Since $\cat{M}_{lc}'(n,L)$ is an 
$\Omega^1_{\cat{U}^s(n,L)}$-torsor over $\cat{U}^s(n,L)$ (see Lemma \ref{lem:1}), the above construction gives
an algebraic vector bundle $\cat{F}$
over $\cat{U}^s(n,L)$ with $\cat{M}'_{lc}(n,L)$  embedded 
in $\p (\cat{F})$ such that the complement $\p(\cat{F}) \setminus \cat{M}'_{lc}(n,L)$ is a hyperplane $\h$
at infinity. Now,
the hyperplane at infinity $\h$ is canonically identified with the 
total space of the projective bundle 
$\p (T \cat{U}^s(n,L))$, the space of all hyperplanes
in the fibre of the tangent bundle $T \cat{U}^s(n,L)$.

Putting $F = \p (T \cat{U}^s(n,L))$, $Y = \p (\cat{F})$ and $U = \cat{M}_{lc}'(n,L)$ in Lemma \ref{lem:3}, we get an
exact sequence 

 \begin{equation}
\label{eq:17}
\G[l]{\p (T \cat{U}^s(n,L))} \xrightarrow{i_*} \G[l]{\p (\cat{F}) } \xrightarrow{j^*}
\G[l]{\cat{M}_{lc}'(n,L)} \rightarrow 0.
\end{equation}
of abelian groups for every $l = 0, \ldots, \dim{\p (\cat{F})} = 2(n^2-1)(g-1)$.

Since $\rk{\cat{F}} = (n^2-1)(g-1)+1$, and $\rk{T \cat{U}^s
(n,L)} = (n^2-1)(g-1)$, from Theorem \ref{thm:1}, we 
have following isomorphisms 
\begin{equation}
\label{eq:18}
\G[l]{\p (\cat{F})} \cong \bigoplus_{k = 0}^{(n^2-1)(g-1)} \G[l-(n^2-1)(g-1)+k]{\cat{U}^s(n,L)}
\end{equation}
and 
\begin{equation}
\label{eq:19}
\G[l]{\p (T \cat{U}^s(n,L))} \cong \bigoplus_{k = 0}^{(n^2-1)(g-1) - 1} \G[l-(n^2-1)(g-1)+1+k]{\cat{U}^s(n,L)}.
\end{equation}

From \eqref{eq:17}, \eqref{eq:18} and \eqref{eq:19},
we get an exact sequence
\begin{equation}
\label{eq:20}
\begin{aligned}
& \bigoplus_{k = 0}^{(n^2-1)(g-1) - 1} \G[l-(n^2-1)(g-1)+1+k]{\cat{U}^s(n,L)} \xrightarrow{i_*} \\
& \bigoplus_{k = 0}^{(n^2-1)(g-1)} \G[l-(n^2-1)(g-1)+k]{\cat{U}^s(n,L)}
 \xrightarrow{j^*}
\G[l]{\cat{M}_{lc}'(n,L)} \rightarrow 0,
\end{aligned}
\end{equation}
which is actually a short exact sequence,
because $i_*$ is injective.
Thus, we have
\begin{equation}
\label{eq:21}
\G[l]{\cat{M}_{lc}'(n,L)} \cong
\G[l-(n^2-1)(g-1)]{\cat{U}^s(n,L)},
\end{equation}
for every $ (n^2-1)(g-1) \leq l \leq 2 (n^2-1)(g-1)$.
Now, rescaling $l$, we will get the desired result,
and this completes the proof.
\end{proof}

\begin{corollary}
\label{cor:1} For $l = 2(n^2-1)(g-1) -1$, we have
$$\G[l]{\cat{M}_{lc}'(n,L)} \cong \Z.$$
\end{corollary}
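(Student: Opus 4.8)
The plan is to feed the top admissible value of $l$ into Theorem \ref{thm:2} and then recognise the resulting group as the Picard group of $\cat{U}^s(n,L)$.

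First I would take $l' = (n^2-1)(g-1)-1$ in Theorem \ref{thm:2}. Since $g\geq 3$ and $n\geq 2$ we have $(n^2-1)(g-1)\geq 1$, so $0\leq l'\leq (n^2-1)(g-1)$ and Theorem \ref{thm:2} applies, giving
\begin{equation*}
\G[2(n^2-1)(g-1)-1]{\cat{M}_{lc}'(n,L)}\;\cong\;\G[(n^2-1)(g-1)-1]{\cat{U}^s(n,L)}.
\end{equation*}
Thus the statement reduces to computing the Chow group of cycles of dimension one below the dimension of $\cat{U}^s(n,L)$, namely $(n^2-1)(g-1)$; that is, the codimension-one Chow group.

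Next I would use that $\cat{U}^s(n,L)$ is a smooth, irreducible, projective variety, so that Weil and Cartier divisors coincide and the codimension-one Chow group is canonically identified with the Picard group,
\begin{equation*}
\G[(n^2-1)(g-1)-1]{\cat{U}^s(n,L)}\;\cong\;\Pic{\cat{U}^s(n,L)}.
\end{equation*}
Finally I would invoke the theorem of Dr\'ezet and Narasimhan on the Picard group of moduli spaces of bundles over a curve: under the standing hypothesis $(n,d)=1$, the variety $\cat{U}^s(n,L)$ is smooth projective with $\Pic{\cat{U}^s(n,L)}\cong\Z$, generated by the ample determinant (theta) line bundle. Chaining the three isomorphisms yields $\G[2(n^2-1)(g-1)-1]{\cat{M}_{lc}'(n,L)}\cong\Z$.

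There is essentially no real obstacle here beyond bookkeeping: one only needs to check that the chosen $l'$ lies in the admissible range of Theorem \ref{thm:2} (this is where $n\geq 2$ and $g\geq 3$ are used), to recall the standard identification of the top-but-one Chow group of a smooth projective variety with its Picard group, and to note that the coprimality assumption $(n,d)=1$ — in force throughout the article — is precisely what guarantees that $\cat{U}^s(n,L)$ is smooth projective with infinite cyclic Picard group.
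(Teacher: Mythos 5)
Your argument is correct, but it takes a different route from the paper: the paper's entire proof of this corollary is a citation to \cite[Proposition 5.3]{AS}, which establishes the Picard group of the moduli space of logarithmic connections directly (essentially by showing that pullback along the torsor projection $p$ identifies $\Pic{\cat{M}_{lc}'(n,L)}$ with $\Pic{\cat{U}^s(n,L)} \cong \Z$). You instead derive the statement internally from Theorem \ref{thm:2}: specialising to $l' = (n^2-1)(g-1)-1$ gives $\G[2(n^2-1)(g-1)-1]{\cat{M}_{lc}'(n,L)} \cong \G[(n^2-1)(g-1)-1]{\cat{U}^s(n,L)}$, and since $\cat{U}^s(n,L)$ is smooth (indeed projective, as $(n,d)=1$), its codimension-one Chow group is its Picard group, which is $\Z$ by Dr\'ezet--Narasimhan \cite{DN} --- a fact the paper itself invokes just before Corollary \ref{cor:3}. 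Both arguments are sound; yours has the merit of being self-contained relative to the results already proved in this section and of exhibiting the corollary as the top nontrivial case of the general isomorphism \eqref{eq:15.1}, whereas the paper's citation bypasses Theorem \ref{thm:2} entirely. The only point worth stating explicitly in your write-up is the standard identification $\G[N-1]{Y}\cong \Pic{Y}$ for $Y$ smooth of dimension $N$, which requires smoothness (or at least local factoriality) of $\cat{U}^s(n,L)$; this is guaranteed here by the coprimality hypothesis, exactly as you note.
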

\begin{proof}
See \cite[Proposition 5.3]{AS}.
\end{proof}

\begin{corollary}
\label{cor:2} For $n = 2$, we have
\begin{enumerate}
\item \label{1} $\G[3g-3]{\cat{M}_{lc}'(2,L)}
\cong \Z $.
\item \label{2} $\CG[3g-2]{\Q}{\cat{M}_{lc}'(2,L)}
\cong \CG[0]{\Q}{X}$.
\item \label{3} $\CG[6g-8]{\Q}{\cat{M}_{lc}'(2,L)}
\cong \CG[0]{\Q}{X} \oplus \Q$.
\end{enumerate}
\end{corollary}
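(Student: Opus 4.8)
The plan is to read off all three statements from Theorem~\ref{thm:2} specialised to $n=2$. For $n=2$ one has $(n^2-1)(g-1)=3g-3$, so Theorem~\ref{thm:2} provides canonical isomorphisms
\[
\G[l+3g-3]{\cat{M}_{lc}'(2,L)}\cong\G[l]{\cat{U}^s(2,L)}
\]
for every $0\le l\le 3g-3$. Statement~(1) will be the case $l=0$, statement~(2) the case $l=1$ (after tensoring with $\Q$), and statement~(3) the case $l=3g-5$ (after tensoring with $\Q$); all three values $0,1,3g-5$ lie in the admissible range since $g\ge 3$. So the work reduces to identifying $\G[0]{\cat{U}^s(2,L)}$, $\CG[1]{\Q}{\cat{U}^s(2,L)}$ and $\CG[3g-5]{\Q}{\cat{U}^s(2,L)}$.

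For (1) I would argue as follows. Since $n$ and $d=\deg L$ are coprime, $d$ is odd, and for odd $d$ the moduli space $\cat{U}^s(2,L)$ is a smooth projective rational variety (in particular rationally connected, hence $\mathrm{CH}_0$-trivial); therefore $\G[0]{\cat{U}^s(2,L)}\cong\Z$, generated by the class of a point, which gives $\G[3g-3]{\cat{M}_{lc}'(2,L)}\cong\Z$. For (2) and (3) I would invoke the fact, recorded in the discussion following \eqref{eq:12}, that the isomorphisms \eqref{eq:11} and \eqref{eq:12} remain valid with $\cat{U}^s(2,\struct{X}(x_0))$ replaced by any $\cat{U}^s(2,L)$ with $\deg L$ odd, via the twisting isomorphism $E\mapsto E\otimes M$ together with the invariance of Chow groups under isomorphisms of varieties; hence $\CG[1]{\Q}{\cat{U}^s(2,L)}\cong\CG[0]{\Q}{X}$ and $\CG[3g-5]{\Q}{\cat{U}^s(2,L)}\cong\CG[0]{\Q}{X}\oplus\Q$. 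Combining these with the $\Q$-tensored form of Theorem~\ref{thm:2} at $l=1$ and at $l=3g-5$ will give $\CG[3g-2]{\Q}{\cat{M}_{lc}'(2,L)}\cong\CG[0]{\Q}{X}$ and $\CG[6g-8]{\Q}{\cat{M}_{lc}'(2,L)}\cong\CG[0]{\Q}{X}\oplus\Q$.

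This will be essentially a bookkeeping exercise once Theorem~\ref{thm:2} is in hand, so I do not expect a substantial obstacle. The only points that will require a moment's care are: checking that the shifted indices $l=0,1,3g-5$ all fall within the range $[0,3g-3]$ where Theorem~\ref{thm:2} applies (immediate for $g\ge 3$); the $\mathrm{CH}_0$-triviality of $\cat{U}^s(2,L)$, which is needed for the integral statement (1) and rests on the (classical) rationality of this moduli space when $\deg L$ is odd; and the passage from the fixed determinant $\struct{X}(x_0)$ to an arbitrary line bundle $L$ of odd degree, which has already been established in the text just before \eqref{eq:12.5}.
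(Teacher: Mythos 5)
Your proposal is correct and follows essentially the same route as the paper: specialise Theorem \ref{thm:2} to $n=2$ at $l=0,1,3g-5$ and invoke \eqref{eq:11} and \eqref{eq:12}, extended from $\struct{X}(x_0)$ to any odd-degree $L$ by the twisting isomorphism noted before \eqref{eq:12.5}. The only difference is that you spell out the $\mathrm{CH}_0$-triviality of the rational variety $\cat{U}^s(2,L)$ needed for part (1), which the paper leaves implicit.
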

\begin{proof}
From Theorem \ref{thm:2}, and equations \eqref{eq:11} and 
\eqref{eq:12}, we conclude the Corollary.
\end{proof}

Next, let $\cat{U}^{s}(n,d)$ be the moduli space of 
stable vector bundle of rank $n$ and degree $d$.
 Consider the following natural morphism
\begin{equation}
\label{eq:21.5}
p_0: \cat{M}'_{lc}(n,d) \to \cat{U}^{s}(n,d)
\end{equation}
sending $(E,D)$ to $E$. Then $p_0^{-1}(E)$ is an affine 
space modelled over the vector space  $\coh{0}{X}{\Omega^1_X \otimes \ENd{E}}$. Since $E$ is stable,
the dimension of the vector space $\coh{0}{X}{\Omega^1_X \otimes \ENd{E}}$ is $n^2(g-1) +1$. In view of \cite[Theorem 1.1]{AS}, we can show a result similar 
to the Theorem \ref{thm:2}, which interprets the Chow groups of $\cat{M}_{lc}'(n,d)$ in terms of Chow groups of 
$\cat{U}^{s}(n,d)$.
\begin{theorem}
\label{thm:2.5}
For every $ 0 \leq l \leq  n^2(g-1) + 1$, 
we have canonical isomorphism 
\begin{equation}
\label{eq:21.6}
\G[l+n^2(g-1)+1]{\cat{M}_{lc}'(n,d)} \cong
\G[l]{\cat{U}^s(n,d)}.
\end{equation}
\end{theorem}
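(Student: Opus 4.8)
The plan is to repeat, almost verbatim, the projective-bundle argument from the proof of Theorem \ref{thm:2}, with $\cat{U}^s(n,L)$ replaced by $\cat{U}^s(n,d)$ and with $(n^2-1)(g-1)$ replaced everywhere by $r:=n^2(g-1)+1$. The only new ingredient at the start is the analogue of Lemma \ref{lem:1}: that the forgetful morphism $p_0$ of \eqref{eq:21.5} exhibits $\cat{M}'_{lc}(n,d)$ as an $\Omega^1_{\cat{U}^s(n,d)}$-torsor over $\cat{U}^s(n,d)$. This is the content of \cite[Theorem 1.1]{AS}: since $(n,d)=1$ there is a universal bundle on $\cat{U}^s(n,d)\times X$, the fibre $p_0^{-1}(E)=\mathbf{LC}(E;\Phi_1,\dots,\Phi_m)$ is the affine space modelled on $\coh{0}{X}{\Omega^1_X\otimes\ENd{E}}$, and by Serre duality this vector space is the fibre at $E$ of the cotangent bundle $\Omega^1_{\cat{U}^s(n,d)}$, of rank $\dim{\cat{U}^s(n,d)}=r$ (stability of $E$ enters through $\coh{0}{X}{\ENd{E}}=\C$, which forces $\dim{\coh{0}{X}{\Omega^1_X\otimes\ENd{E}}}=n^2(g-1)+1$).

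From here the argument is the one in Theorem \ref{thm:2}. Using the standard embedding of the affine group of $\C^{r}$ into $\mathrm{GL}(r+1,\C)$, the torsor extends to a rank-$(r+1)$ vector bundle $\cat{F}$ on $\cat{U}^s(n,d)$ together with an open immersion $\cat{M}'_{lc}(n,d)\hookrightarrow\p(\cat{F})$ whose complement is the hyperplane at infinity $\h$, canonically identified with the projective bundle $\p(T\cat{U}^s(n,d))$. Applying Lemma \ref{lem:3} with $F=\p(T\cat{U}^s(n,d))$, $Y=\p(\cat{F})$, $U=\cat{M}'_{lc}(n,d)$ gives, for $0\le l\le\dim{\p(\cat{F})}=2r$, an exact sequence
\begin{equation*}
\G[l]{\p(T\cat{U}^s(n,d))}\xrightarrow{i_*}\G[l]{\p(\cat{F})}\xrightarrow{j^*}\G[l]{\cat{M}'_{lc}(n,d)}\to 0 ,
\end{equation*}
and the projective-bundle formula of Theorem \ref{thm:1}, applied to $\cat{F}$ (rank $r+1$) and to $T\cat{U}^s(n,d)$ (rank $r$), rewrites it as
\begin{equation*}
\bigoplus_{k=0}^{r-1}\G[l-r+1+k]{\cat{U}^s(n,d)}\xrightarrow{i_*}\bigoplus_{k=0}^{r}\G[l-r+k]{\cat{U}^s(n,d)}\xrightarrow{j^*}\G[l]{\cat{M}'_{lc}(n,d)}\to 0 .
\end{equation*}

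Finally, just as in Theorem \ref{thm:2}, the map $i_*$ is injective: the compatibility $\struct{\p(\cat{F})}(1)\vert_{\p(T\cat{U}^s(n,d))}=\struct{\p(T\cat{U}^s(n,d))}(1)$ together with the projection formula identifies $i_*$ with the split inclusion of the left-hand sum onto the summands $k=1,\dots,r$ of the middle one (alternatively one may invoke homotopy invariance of Chow groups for the Zariski-locally trivial affine bundle $\cat{M}'_{lc}(n,d)$). Hence the sequence is short exact, so $\G[l]{\cat{M}'_{lc}(n,d)}\cong\G[l-r]{\cat{U}^s(n,d)}$ for $r\le l\le 2r$, and rescaling $l$ by $r=n^2(g-1)+1$ yields \eqref{eq:21.6}. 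I expect that there is no real obstacle here beyond the torsor statement of the first paragraph — that the fibrewise affine-space structures glue to an $\Omega^1_{\cat{U}^s(n,d)}$-torsor, together with the universal bundle used to identify its model; both rest on $(n,d)=1$ and are exactly what \cite[Theorem 1.1]{AS} supplies. Everything downstream is a formal transcription of the proof of Theorem \ref{thm:2} with the single bookkeeping change $(n^2-1)(g-1)\mapsto n^2(g-1)+1$.
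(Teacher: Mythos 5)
Your proposal is correct and is essentially the paper's own argument: the paper proves Theorem \ref{thm:2.5} by invoking \cite[Theorem 1.1]{AS} for the affine-bundle (torsor) structure of $p_0$ with fibre $\coh{0}{X}{\Omega^1_X\otimes\ENd{E}}$ of dimension $n^2(g-1)+1$, and then repeating the projective-completion and localisation argument of Theorem \ref{thm:2} with $(n^2-1)(g-1)$ replaced by $n^2(g-1)+1$, exactly as you do. Your added justification for the injectivity of $i_*$ (or the alternative via homotopy invariance for the affine bundle) only makes explicit a step the paper asserts without comment.
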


Now, we compute the same for the moduli space of holomorphic connections.
Fix a holomorphic line bundle
$L_0$ of  degree $0$ on $X$.
Let $\cat{U}(n,L_0)$ denote the moduli 
space of $S$-equivalence classes of semistable  vector bundles of rank $n$ and 
determinant $\bigwedge^n E \cong L_0 $. Then 
the moduli space $\cat{U}(n,L_0)$ is known 
to be an irreducible 
normal projective variety of dimension $(n^2-1)(g-1)$.

Let 
\begin{equation}
\label{eq:3.15}
\cat{U}^{s}(n, L_0) \subset 
\cat{U}(n, L_0)
\end{equation}
 be the open subvariety 
parametrizing the stable bundles on $X$. This open 
subvariety coincides with the smooth locus of 
$\cat{U}(n,L_0)$ follows from 
\cite[p.~20, Theorem 1]{NR}. 

Fix a holomorphic connection $D^{L_0}$ on $L_0$.
Let $\cat{M}_h(n, L_0)$ be the moduli space of 
holomorphic connections parametrising the isomorphism 
classes of the pairs $(E, D)$ where 
$E$ is a holomorphic vector bundle of rank $n$ with
$$(\bigwedge^n E, \tilde{D}) \cong (L_0, D^{L_0}),$$ and 
$\tilde{D}$ is a holomorphic connection on 
$\bigwedge^n E$ induced from $D$.
Then $\cat{M}_h(n, L_0)$ is an irreducible 
normal quasi-projective variety of dimension 
$2(n^2-1)(g-1)$.
Let $$\cat{M}_h^{sm}(n,L_0) \subset \cat{M}_h(n, L_0)$$
be the smooth locus of $\cat{M}_h(n, L_0)$.
Let 
\begin{equation}
\label{eq:m4}
\cat{M}_h'(n,L_0)  \subset \cat{M}^{sm}_h(n, L_0)
\end{equation}
be the subset consisting of holomorphic connections 
whose underlying vector bundle is stable.
Then $\cat{M}_h'(n,L_0)$ is an irreducible smooth
quasi-projective variety of dimension $2(n^2-1)(g-1)$.
 
Let
\begin{equation}
\label{eq:22}
q : \cat{M}_h'(n,L_0) \to  \cat{U}^{s}(n, L_0)
\end{equation}
be the forgetful map which forgets the holomorphic connection.
Then for every $E \in \cat{U}^s(n,L_0)$, $q^{-1}(E)$
is  an affine space modelled over $\coh{0}{X}{\Omega^1_X \otimes \ad{E}}$.
In fact, $\cat{M}_h'(n,L_0)$ is an 
$\Omega^1_{\cat{U}(n,L_0)}$-torsor on $\cat{U}^{s}(n,L_0)$.
 

Let $Y$ be an $N$-dimensional smooth quasi-projective 
variety. Then,
 the Picard group $\Pic{Y} \otimes_{\Z} \Q $ can be identified with $\CG[N-1]{\Q}{Y}$. Thus, it is enough
to compute $\Pic{Y}$.

The morphism $q$ as defined in $\eqref{eq:22}$ 
induces a homomorphism
\begin{equation}
\label{eq:23}
q^* : \Pic{\cat{U}^{s}(n, L_0)} \to \Pic{\cat{M}_h'(n,L_0)}
\end{equation}
of Picard groups given by sending a line bundle $M$
to its pull-back $q^*M$.
Using the similar techniques as in 
\cite[Theorem 1.2]{AS}, we can show the following.

\begin{proposition}
\label{prop:4}
The homomorphism $q^* : \Pic{\cat{U}^{s}(n, L_0)} \to \Pic{\cat{M}_h'(n,L_0)}$ is an isomorphism of groups.
\end{proposition}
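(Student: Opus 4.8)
The plan is to transcribe the proof of Theorem \ref{thm:2} from Chow groups to Picard groups, using the standard fact that the Picard group of a smooth quasi-projective variety over $\C$ coincides with its Chow group of codimension-one cycles, so that Lemma \ref{lem:3} and Theorem \ref{thm:1} apply to Picard groups as well. Put $N := (n^2-1)(g-1)$, so $\cat{U}^s(n,L_0)$ has dimension $N$ and $\cat{M}_h'(n,L_0)$ has dimension $2N$. As recalled just before the statement, the forgetful morphism $q$ of \eqref{eq:22} exhibits $\cat{M}_h'(n,L_0)$ as a torsor over $\cat{U}^s(n,L_0)$ under the vector bundle $\Omega^1_{\cat{U}^s(n,L_0)}$ of rank $N$. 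Exactly as in the proof of Theorem \ref{thm:2}, embedding the affine group of $\A^N$ into $\mathrm{GL}(N+1,\C)$ in the standard way produces a vector bundle $\cat{F}$ of rank $N+1$ over $\cat{U}^s(n,L_0)$, with projective bundle $\pi \colon \p(\cat{F}) \to \cat{U}^s(n,L_0)$, inside which $\cat{M}_h'(n,L_0)$ sits as the complement of the ``hyperplane at infinity'' $\h$; moreover $\h$ is canonically identified with the projective bundle $\p(T\cat{U}^s(n,L_0))$ of hyperplanes in the fibres of $T\cat{U}^s(n,L_0)$.

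First I would run the localisation sequence for codimension-one cycles, that is, Lemma \ref{lem:3} with $F=\h$, $Y=\p(\cat{F})$, $U=\cat{M}_h'(n,L_0)$ and $l=2N-1$, together with the projective bundle formula, that is, Theorem \ref{thm:1} applied to $\pi$ with $\rk{\cat{F}}=N+1$ and $l=2N-1$. Since $\G[j]{\cat{U}^s(n,L_0)}=0$ for $j>N$, only two summands survive in the projective bundle formula, so $\Pic{\p(\cat{F})} = \pi^*\Pic{\cat{U}^s(n,L_0)} \oplus \Z\cdot\xi$ with $\xi$ the class of $\struct{\p(\cat{F})}(1)$, and one obtains an exact sequence
\begin{equation*}
\Z\cdot[\h] \longrightarrow \pi^*\Pic{\cat{U}^s(n,L_0)} \oplus \Z\cdot\xi \xrightarrow{\ j^*\ } \Pic{\cat{M}_h'(n,L_0)} \longrightarrow 0,
\end{equation*}
where $j \colon \cat{M}_h'(n,L_0) \hookrightarrow \p(\cat{F})$ is the open immersion. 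Since $\pi\circ j=q$, the composite $j^*\circ\pi^*$ is precisely $q^*$, so everything reduces to locating the class $[\h]$ in this decomposition.

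The class $[\h]$ has $\xi$-coefficient equal to $1$: restricted to a fibre $\p^N$ of $\pi$, the divisor $\h$ cuts out a hyperplane $\p^{N-1}\subset\p^N$, whose class is the hyperplane class, i.e. $\xi$ restricted to $\p^N$. Hence $[\h]=\xi+\pi^*\nu$ for some $\nu\in\Pic{\cat{U}^s(n,L_0)}$. Injectivity of $q^*$ then follows: if $q^*\mu=j^*\pi^*\mu=0$, then $\pi^*\mu\in\ker(j^*)=\Z\cdot[\h]=\Z\cdot(\xi+\pi^*\nu)$, and comparing $\xi$-coefficients forces $\pi^*\mu=0$, hence $\mu=0$ because $\pi^*$ is injective. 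Surjectivity also follows: given $\lambda\in\Pic{\cat{M}_h'(n,L_0)}$, lift it to $\pi^*\mu+m\xi\in\Pic{\p(\cat{F})}$ with $j^*(\pi^*\mu+m\xi)=\lambda$; then $\pi^*\mu+m\xi-m[\h]=\pi^*(\mu-m\nu)$ also maps to $\lambda$ under $j^*$, since $j^*[\h]=0$, so $\lambda=q^*(\mu-m\nu)$. Thus $q^*$ is an isomorphism.

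This is essentially a word-for-word adaptation of the proof of Theorem \ref{thm:2} together with one elementary projective-bundle computation, so I do not expect a serious obstacle. The only points that need care are the identification of the hyperplane at infinity $\h$ with $\p(T\cat{U}^s(n,L_0))$ -- already carried out in the proof of Theorem \ref{thm:2} -- and the observation that $[\h]$ has $\xi$-coefficient exactly $1$, which is what makes both the injectivity (equivalently, $\Z\cdot[\h]\cap\pi^*\Pic{\cat{U}^s(n,L_0)}=0$) and the surjectivity go through regardless of the value of $\nu$. Alternatively, one may bypass the projectivisation entirely: $q$ is a torsor under a vector bundle over the smooth base $\cat{U}^s(n,L_0)$, and homotopy invariance of the Picard group for such affine bundles yields directly that $q^*$ is an isomorphism.
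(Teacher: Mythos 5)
Your proof is correct, but note that it is not what the paper itself does: for Proposition \ref{prop:4} the paper gives no argument beyond the remark that the statement follows by the techniques of \cite[Theorem 1.2]{AS}, i.e.\ the result is imported from the logarithmic setting. What you do instead is rerun the paper's own proof of Theorem \ref{thm:2} at the level of divisors, which makes the statement self-contained: compactify the $\Omega^1_{\cat{U}^s(n,L_0)}$-torsor inside $\p(\cat{F})$, apply the localisation sequence (Lemma \ref{lem:3}) with $l=2N-1$, $N=(n^2-1)(g-1)$, together with the projective bundle formula (Theorem \ref{thm:1}), and then, instead of appealing to injectivity of $i_*$ as in Theorem \ref{thm:2}, pin down the class of the hyperplane at infinity as $[\h]=\xi+\pi^*\nu$; the fact that its $\xi$-coefficient equals $1$ yields both injectivity and surjectivity of $q^*$. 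That coefficient computation is the genuinely new ingredient of your route, and it is precisely what upgrades the abstract isomorphism $\G[2N-1]{\cat{M}_h'(n,L_0)}\cong\G[N-1]{\cat{U}^s(n,L_0)}$ (the case $l=N-1$ of Theorem \ref{thm:3}) to the assertion that the isomorphism is induced by pullback along $q$, which is what Proposition \ref{prop:4} actually asserts and what Corollary \ref{cor:3} uses. Two points you use implicitly are fine but worth recording: the torsor is Zariski-locally trivial, being classified by the coherent cohomology group $\coh{1}{\cat{U}^s(n,L_0)}{\Omega^1_{\cat{U}^s(n,L_0)}}$, so the compactification $\p(\cat{F})$ exists exactly as in Theorem \ref{thm:2}; and identifying the Picard group with the group of divisor classes of codimension one only needs smoothness of the two quasi-projective varieties involved, which holds here. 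Your alternative closing remark (homotopy invariance of codimension-one Chow groups for torsors under vector bundles) is also valid and is essentially the same computation packaged differently.
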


Since $\Pic{\cat{U}^s(n,L_0)} \cong \Z$ (see \cite{DN}), we have 
\begin{corollary}
\label{cor:3}
For $l = 2(n^2-1)(g-1) -1$, we have 
$$\G[l]{\cat{M}_{h}'(n,L_0)} \cong \Z.$$
\end{corollary}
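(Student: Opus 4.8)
The plan is to invoke Proposition \ref{prop:4} together with the identification of the top Chow group of a smooth quasi-projective variety with its Picard group, and then read off the rank-one statement from the known computation of $\Pic{\cat{U}^s(n,L_0)}$. Concretely, $\cat{M}_h'(n,L_0)$ is an irreducible smooth quasi-projective variety of dimension $N = 2(n^2-1)(g-1)$, as recorded in the discussion surrounding \eqref{eq:m4}. Hence, by the identification stated just before \eqref{eq:23}, one has $\CG[N-1]{\Q}{\cat{M}_h'(n,L_0)} \cong \Pic{\cat{M}_h'(n,L_0)} \otimes_{\Z} \Q$, and in fact I would argue at the integral level: $\G[N-1]{\cat{M}_h'(n,L_0)} \cong \Pic{\cat{M}_h'(n,L_0)}$, since for a smooth variety the group of Weil divisors modulo rational equivalence in codimension one coincides with the Picard group.

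First I would apply Proposition \ref{prop:4}, which gives $q^* : \Pic{\cat{U}^s(n,L_0)} \xrightarrow{\ \sim\ } \Pic{\cat{M}_h'(n,L_0)}$. Next I would recall from \cite{DN} that $\Pic{\cat{U}^s(n,L_0)} \cong \Z$ (this is legitimate since $n$ and $\deg(L_0)=0$ need not be coprime here, but the Drezet--Narasimhan description of the Picard group of the moduli space with fixed determinant still applies to the stable locus, which is the smooth locus by \cite[p.~20, Theorem~1]{NR}). Combining the two isomorphisms yields $\Pic{\cat{M}_h'(n,L_0)} \cong \Z$, and therefore $\G[2(n^2-1)(g-1)-1]{\cat{M}_h'(n,L_0)} \cong \Z$, which is exactly the claim for $l = 2(n^2-1)(g-1)-1$.

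The only genuine subtlety — and the step I would treat most carefully — is the passage from $\Pic{Y} \otimes_{\Z}\Q \cong \CG[N-1]{\Q}{Y}$ (the form stated in the text) to the integral statement $\Pic{Y} \cong \G[N-1]{Y}$ used in the conclusion. For a smooth variety $Y$ this is standard: codimension-one cycles modulo rational equivalence are the same as the divisor class group, which for smooth $Y$ equals $\Pic{Y}$; since $\Pic{\cat{U}^s(n,L_0)} \cong \Z$ is already torsion-free, tensoring with $\Q$ loses nothing and the integral and rational statements agree. I would state this identification explicitly as the bridge, cite \cite{F} for the divisor-class-group description of $\G[N-1]{Y}$, and otherwise the proof is immediate from Proposition \ref{prop:4} and \cite{DN}.
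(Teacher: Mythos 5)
Your proposal is correct and follows essentially the same route as the paper: the corollary is deduced from Proposition \ref{prop:4} together with $\Pic{\cat{U}^s(n,L_0)} \cong \Z$ from \cite{DN}, using the identification of codimension-one cycles modulo rational equivalence on the smooth variety $\cat{M}_h'(n,L_0)$ with its Picard group. Your explicit remark that this identification holds integrally (not just after tensoring with $\Q$) is a welcome clarification of a point the paper leaves implicit.
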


Using the exactly similar steps as in Theorem \ref{thm:2},
we can prove the following. 

\begin{theorem}
\label{thm:3}
For every $ 0 \leq l \leq  (n^2-1)(g-1)$, 
we have canonical isomorphisms 
\begin{equation}
\label{eq:24}
\G[l+(n^2-1)(g-1)]{\cat{M}_{h}'(n,L_0)} \cong
\G[l]{\cat{U}^s(n,L_0)}.
\end{equation}
\end{theorem}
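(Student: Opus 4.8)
The plan is to mimic the proof of Theorem \ref{thm:2} essentially verbatim, replacing $\cat{U}^s(n,L)$ by $\cat{U}^s(n,L_0)$ and $\cat{M}_{lc}'(n,L)$ by $\cat{M}_h'(n,L_0)$. The two ingredients that made the earlier argument work were: first, that the forgetful map realizes the moduli space of connections as a torsor under the cotangent bundle of the moduli space of bundles (Lemma \ref{lem:1}); and second, the combination of the localisation sequence (Lemma \ref{lem:3}) with the projective bundle formula (Theorem \ref{thm:1}). Both are available here. Indeed, for $E \in \cat{U}^s(n,L_0)$ the fibre $q^{-1}(E)$ is an affine space modelled over $\coh{0}{X}{\Omega^1_X \otimes \ad{E}}$, and by the discussion preceding \eqref{eq:22} the map $q$ exhibits $\cat{M}_h'(n,L_0)$ as an $\Omega^1_{\cat{U}^s(n,L_0)}$-torsor over $\cat{U}^s(n,L_0)$. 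Note that $\cat{U}^s(n,L_0)$ has dimension $(n^2-1)(g-1)$, exactly as in the logarithmic case, so all the numerics carry over unchanged.

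First I would take an affine bundle $\cat{G}$ modelled on a vector bundle $\cat{H}$ of rank $r = (n^2-1)(g-1)$ over $\cat{U}^s(n,L_0)$, and using the standard inclusion of the affine group into $\mathrm{GL}(r+1,\C)$ produce a rank $(r+1)$ vector bundle $\cat{F}$ on $\cat{U}^s(n,L_0)$ together with an open embedding of $\cat{G}$ into $\p(\cat{F})$ whose complement is $\p(\cat{H}^{\vee})$. Applying this to the torsor $\cat{M}_h'(n,L_0)$, the complement hyperplane $\h = \p(\cat{F}) \setminus \cat{M}_h'(n,L_0)$ is canonically identified with $\p(T\cat{U}^s(n,L_0))$. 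Then I would invoke Lemma \ref{lem:3} with $F = \p(T\cat{U}^s(n,L_0))$, $Y = \p(\cat{F})$, $U = \cat{M}_h'(n,L_0)$ to obtain the localisation exact sequence, and expand the two projective-bundle terms via Theorem \ref{thm:1} exactly as in \eqref{eq:18} and \eqref{eq:19}, with $(n^2-1)(g-1)$ in the roles previously played by the same integer.

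The one point that requires a separate argument — and the only real obstacle — is the injectivity of the Gysin map $i_*$, which is what collapses the localisation sequence into a short exact sequence and hence yields the isomorphism $\G[l]{\cat{M}_h'(n,L_0)} \cong \G[l-(n^2-1)(g-1)]{\cat{U}^s(n,L_0)}$. In the proof of Theorem \ref{thm:2} this injectivity was asserted; the cleanest way to see it in general is that the projective bundle $\p(\cat{F})$ admits a section disjoint from the hyperplane at infinity (the zero section of the affine/torsor part), so the composition $\G[*]{\p(T\cat{U}^s)} \xrightarrow{i_*} \G[*]{\p(\cat{F})} \to \G[*]{\p(T\cat{U}^s)}$ can be analyzed through the projective bundle decomposition and the self-intersection formula for $\h$; alternatively one observes that $j^*$ admits a splitting coming from the bundle projection restricted to $U$, which forces $i_*$ to be injective. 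Since nothing in this step used the logarithmic structure — only that we have a torsor under a vector bundle over a smooth variety — the same reasoning applies here.

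Finally, having established \eqref{eq:21} in the range $(n^2-1)(g-1) \leq l \leq 2(n^2-1)(g-1)$, I would rescale the index $l$ to obtain the statement as written, for $0 \leq l \leq (n^2-1)(g-1)$. This completes the proof; the argument is word-for-word that of Theorem \ref{thm:2}, and indeed the excerpt already signals this by saying ``using the exactly similar steps as in Theorem \ref{thm:2}''.
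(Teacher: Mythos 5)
Your proposal is correct and is essentially the paper's own argument: the paper proves Theorem \ref{thm:3} by noting that $q$ makes $\cat{M}_h'(n,L_0)$ an $\Omega^1_{\cat{U}^s(n,L_0)}$-torsor and then repeating the proof of Theorem \ref{thm:2} (affine-to-projective bundle completion, localisation sequence, projective bundle formula, injectivity of $i_*$, and rescaling of $l$) word for word. Your added justification of the injectivity of $i_*$ is a harmless supplement to what the paper simply asserts.
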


Next, let $\cat{U}^s(n) := \cat{U}^s(n,0)$ be the moduli space of stable 
bundles of rank $n$ and degree zero. Then $\cat{U}^s(n)$
is an irreducible smooth projective variety of dimension 
$n^2(g-1)+ 1$. Again, we have a natural morphism
\begin{equation}
\label{eq:24.1}
q_0 : \cat{M}_h'(n) \to \cat{U}^s(n)
\end{equation} 
of varieties
which forgets the holomorphic connection.
Using the same method as above, we have the following
theorem similar to the Theorem \ref{thm:2.5}.
\begin{theorem}
\label{thm:2.6}
For every $ 0 \leq l \leq  n^2(g-1) + 1$, 
we have canonical isomorphism 
\begin{equation}
\label{eq:21.7}
\G[l+n^2(g-1)+1]{\cat{M}_{h}'(n)} \cong
\G[l]{\cat{U}^s(n)}.
\end{equation}
\end{theorem}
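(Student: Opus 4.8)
The plan is to mimic the proof of Theorem \ref{thm:2} verbatim, replacing $\cat{M}_{lc}'(n,L)$ by $\cat{M}_h'(n)$, $\cat{U}^s(n,L)$ by $\cat{U}^s(n)$, and the rank $(n^2-1)(g-1)$ by $n^2(g-1)+1$. First I would record the analogue of Lemma \ref{lem:1}: the natural morphism $q_0 : \cat{M}_h'(n) \to \cat{U}^s(n)$ of \eqref{eq:24.1} exhibits $\cat{M}_h'(n)$ as an $\Omega^1_{\cat{U}^s(n)}$-torsor over $\cat{U}^s(n)$. This is the holomorphic-connection counterpart of the statement that $\mathbf{LC}(E;\Phi_1,\ldots,\Phi_m)$ is an affine space modelled over $\coh{0}{X}{\Omega^1_X \otimes \ENd{E}}$: here the space of holomorphic connections on a fixed stable $E$ (of degree zero, so connections exist by Atiyah--Weil) is an affine space modelled over $\coh{0}{X}{\Omega^1_X \otimes \ENd{E}}$, and since $E$ is stable $\coh{0}{X}{\ENd{E}} = \C$, so by Riemann--Roch and Serre duality $\coh{0}{X}{\Omega^1_X \otimes \ENd{E}}$ has dimension $n^2(g-1)+1$; globalising via the universal bundle $\cat{E}$ on $\cat{U}^s(n) \times X$ identifies this with the fibre of $\Omega^1_{\cat{U}^s(n)} = {p_1}_*(\ENd{\cat{E}} \otimes p_2^*(\Omega^1_X))$.

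Next I would run the projective-compactification argument of Theorem \ref{thm:2}. An $\Omega^1_{\cat{U}^s(n)}$-torsor $\cat{G} = \cat{M}_h'(n)$, via the standard inclusion of the affine group into $\mathrm{GL}(r+1,\C)$ with $r = n^2(g-1)+1$, embeds as an open subset of $\p(\cat{F})$ for an algebraic vector bundle $\cat{F}$ of rank $r+1$ over $\cat{U}^s(n)$, with complement the hyperplane at infinity $\h$, which is canonically the total space of the projective bundle $\p(T\cat{U}^s(n))$. Applying Lemma \ref{lem:3} with $F = \p(T\cat{U}^s(n))$, $Y = \p(\cat{F})$, $U = \cat{M}_h'(n)$ gives the localisation sequence, and Theorem \ref{thm:1} computes $\G[l]{\p(\cat{F})}$ and $\G[l]{\p(T\cat{U}^s(n))}$ as the appropriate direct sums of Chow groups of $\cat{U}^s(n)$. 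The map $i_*$ in the localisation sequence is injective by the projective-bundle formula (it identifies $\G[\bullet]{\p(T\cat{U}^s(n))}$ with the "lower" summands of $\G[\bullet]{\p(\cat{F})}$), so the sequence is short exact, yielding $\G[l]{\cat{M}_h'(n)} \cong \G[l-(n^2(g-1)+1)]{\cat{U}^s(n)}$ for $n^2(g-1)+1 \leq l \leq 2(n^2(g-1)+1) = \dim \cat{M}_h'(n)$; rescaling $l$ gives \eqref{eq:21.7}.

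The only genuinely new point, and hence the main thing to be careful about, is the torsor claim, i.e. that $q_0$ is locally trivial as an affine bundle and globally an $\Omega^1_{\cat{U}^s(n)}$-torsor rather than merely a fibre bundle with affine-space fibres. This requires knowing that over $\cat{U}^s(n)$ the family of affine spaces of holomorphic connections is locally (in the Zariski topology) isomorphic to $\Omega^1_{\cat{U}^s(n)}$ with its affine structure; this follows from the construction of a universal (or at least locally universal) family together with the relative Atiyah-class/obstruction description of connections, exactly as in the references \cite{AS}, \cite{AS1} used for the logarithmic case — I would simply cite that $\cat{M}_h'(n)$ is an $\Omega^1_{\cat{U}^s(n)}$-torsor, parallel to Lemma \ref{lem:1} and to the torsor statement recorded after \eqref{eq:22} for $\cat{M}_h'(n,L_0)$. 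Everything else is a formal consequence of Lemmas \ref{lem:3}, Theorem \ref{thm:1}, and the dimension count; I expect no obstacle there.
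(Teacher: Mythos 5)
Your proposal is correct and is essentially the paper's own argument: the paper establishes Theorem \ref{thm:2.6} by repeating the torsor-plus-projective-compactification proof of Theorem \ref{thm:2} for the forgetful map $q_0:\cat{M}_h'(n)\to\cat{U}^s(n)$ of \eqref{eq:24.1}, with the affine-bundle rank now $n^2(g-1)+1$ (the dimension of $\coh{0}{X}{\Omega^1_X\otimes\ENd{E}}$ for stable $E$ of degree $0$), followed by the localisation sequence and the projective bundle formula exactly as you describe. Your one point of caution --- that no Poincar\'e bundle exists on $\cat{U}^s(n)\times X$ since rank and degree $0$ are not coprime, so the identification $\Omega^1_{\cat{U}^s(n)}\cong {p_1}_*(\END{\cat{E}}\otimes p_2^*\Omega^1_X)$ and the torsor structure must be obtained from locally universal families and descent of the endomorphism sheaf --- is treated at the same level of detail as in the paper, which likewise only asserts the torsor statement.
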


\section{Differential operators on the 
moduli spaces}
\label{Global-sec}
In \cite{B02}, Biswas studied the global sections of 
sheaves of differential operators on an ample line bundle 
over a polarised abelian variety. Also,
in \cite{AS2}, Hitchin variety is defined and  global sections of the sheaf of $k$-th order differential operators, and 
symmetric powers of the sheaf of first order differential operators on 
a line bundle over a Hitchin variety have been studied.  The moduli space of stable vector 
bundles over a compact Riemann surface is an example of Hitchin variety. The moduli spaces of holomorphic and 
logarithmic connections are not Hitchin varieties. 
In this section, we 
study the global sections of certain sheaves over the 
four moduli spaces
 $\cat{M}_{lc}'(n,d)$, $\cat{M}'_{h}(n)$, $\cat{M}'_{lc}(n,L)$ and $\cat{M}'_{h}(n,L_0)$ which we have defined in previous sections.
 
 Let $\zeta$ be an ample 
 line bundle over $\cat{M}_{lc}'(n,d)$. Let $k \geq 0$ be an integer.
Recall that a  differential operator of order $k$
on $\zeta$ is a 
$\C$-linear 
map
\begin{equation}
\label{eq:25}
\theta: \zeta \to \zeta
\end{equation}
such that for every open subset $U$ of $\cat{M}_{lc}'(n,d)$ and for every 
$f \in \struct{\cat{M}_{lc}'(n,d)}(U)$, the bracket 
$$[\theta|_U,f]:\zeta|_U \to \zeta|_U$$ defined as 
\begin{equation*}
\label{eq:26}
[\theta|_U,f]_V(s) = \theta(f|_V s) - \theta|_V P_V(s)
\end{equation*}
is a differential operator of order $k-1$, for every open subset $V$ of 
$U$, and for all $ s \in \zeta(V)$,
where a differential operator of order zero on $\zeta$ is just a
$\struct{\cat{M}_{lc}'(n,d)}$-module homomorphism (see 
\cite{GD} and \cite{R} for the definition and 
properties of differential operators).

For $k \geq 0$, let $\cat{D}^k(\zeta)$ denote the sheaf 
of differential operators on $\zeta$ of order $k$.
In fact, $\cat{D}^k(\zeta)$ is a locally free sheaf
with 
 $\cat{D}^0(\zeta) = 
\struct{\cat{M}_{lc}'(n,d)}$. 
Given a first order differential operator $\theta$
on $\zeta$, we get a section of the tangent bundle 
$T \cat{M}_{lc}'(n,d)$ denoted by $\sigma_1(\theta)$,
where $\sigma_1$ is called the symbol of a first order
differential operator. For simplicity, we shall denote 
$T \cat{M}_{lc}'(n,d)$ by $T \cat{M}_{lc}'$.
Thus, consider the symbol operator
$\sigma_1 :\cat{D}^1(\zeta) \to T \cat{M}_{lc}'$.
This induces a morphism 
\begin{equation*}
\label{eq:27}
\cat{S}ym^k(\sigma_1): \cat{S}ym^k(\cat{D}^1 (\zeta))  \to 
\cat{S}ym^k (T\cat{M}_{lc}') 
\end{equation*}
of $k$-th symmetric powers. 
Now, because of the following composition 
\begin{equation*}
\struct{\cat{M}_{lc}'(n,d)} \otimes \cat{S}ym^{k-1}(\cat{D}^1 (\zeta)) \hookrightarrow 
\cat{D}^1 (\zeta) \otimes \cat{S}ym^{k-1} (\cat{D}^1 (\zeta)) \to 
\cat{S}ym^{k} (\cat{D}^1 (\zeta)),
\end{equation*}
we have 
\begin{equation}
\label{eq:28}
\cat{S}ym^{k-1}(\cat{D}^1 (\zeta))  \subset \cat{S}ym^k (\cat{D}^1 (\zeta))
~~~ \mbox{for all}~ k \geq 1.
\end{equation}

Thus, we get a short exact sequence
\begin{equation}
\label{eq:29}
0 \to \cat{S}ym^{k-1}(\cat{D}^1 (\zeta)) \to \cat{S}
ym^{k}(\cat{D}^1 (\zeta)) \xrightarrow{\cat{S}ym^k 
(\sigma_1)} \cat{S}ym^k (T \cat{M}_{lc}') \to 0.
\end{equation}
Thus, we have 
\begin{equation}
 \label{eq:30}
 \cat{S}ym^{k}(\cat{D}^1 (\zeta)) / \cat{S}ym^{k-1}
 (\cat{D}^1 (\zeta)) \cong \cat{S}ym^k (T\cat{M}_{lc}'
 )~~~ \mbox{for all}~ k \geq 1.
 \end{equation}
 
 From \eqref{eq:28}, we have the following 
  chain of $\C$-vector spaces
 
 \begin{equation}
 \label{eq:31}
 \coh{0}{\cat{M}_{lc}'(n,d)}{\struct{\cat{M}_{lc}'(n,d)}} \subset \coh{0}{\cat{M}_{lc}'(n,d)}{\cat{S}ym^1(\cat{D}^1 (\zeta))}
 \subset   \ldots
 \end{equation}

Consider  the following commutative diagram,
\begin{equation}
\label{eq:4.6}
\xymatrix{
T^*\cat{M}'_{lc} \ar[d]^{\pi'}  & \ar[l]_{\widetilde{p_0}} T^*\cat{U}(n,d) \ar[d]^{\pi} \\
\cat{M}'_{lc}(n,d) \ar[r]^{p_0} & \cat{U}(n,d)\\
}
\end{equation}
where $\pi$, $\pi'$ are the canonical projections and 
$\widetilde{p_0}$ is induced from 
$p_0$ as defined in \eqref{eq:21.5}. 
Thus, we have a morphism 
\begin{equation}
\label{eq:41.15}
{\widetilde{p_0}}_{\sharp} : 
\coh{0}{T^*\cat{M}_{lc}'}{\struct{ T^*\cat{M}_{lc}'}}
\rightarrow \coh{0}{T^*\cat{U}^s(n,d)}{\struct{ T^*\cat{U}^s(n,d)}}.
\end{equation}
of vector spaces 
induced from $\widetilde{p_0}$.

\begin{theorem}
\label{thm:4.1}  Suppose that ${\widetilde{p_0}}_{\sharp}$ in \eqref{eq:41.15} is an injective morphism. Then,
for every $k \geq 0$, we have 
\begin{equation}
\label{eq:32}
 \coh{0}{\cat{M}_{lc}'(n,d)}{\cat{S}ym^k(\cat{D}^1 (\zeta))} = \C.
\end{equation}

\end{theorem}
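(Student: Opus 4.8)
The plan is to argue by induction on $k$, using the short exact sequence \eqref{eq:29}. For $k=0$ the claim is the equality $\coh{0}{\cat{M}_{lc}'(n,d)}{\struct{\cat{M}_{lc}'(n,d)}}=\C$, which is known for these moduli spaces (see the discussion in the introduction and \cite{BR1}, \cite{AS1}). Suppose $\coh{0}{\cat{M}_{lc}'(n,d)}{\cat{S}ym^{k-1}(\cat{D}^1(\zeta))}=\C$ for some $k\ge 1$. Applying $\coh{0}{\cat{M}_{lc}'(n,d)}{-}$ to \eqref{eq:29} and feeding in the inductive hypothesis gives the left-exact sequence
\begin{equation*}
0 \to \C \to \coh{0}{\cat{M}_{lc}'(n,d)}{\cat{S}ym^{k}(\cat{D}^1(\zeta))} \xrightarrow{\cat{S}ym^k(\sigma_1)} \coh{0}{\cat{M}_{lc}'(n,d)}{\cat{S}ym^{k}(T\cat{M}_{lc}')} ,
\end{equation*}
so it suffices to prove that $\coh{0}{\cat{M}_{lc}'(n,d)}{\cat{S}ym^{k}(T\cat{M}_{lc}')}=0$ for every $k\ge 1$.

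Next I would translate this into a statement about the cotangent bundle $\pi':T^*\cat{M}_{lc}'\to\cat{M}_{lc}'(n,d)$. Since $\pi'_{*}\struct{T^*\cat{M}_{lc}'}=\bigoplus_{k\ge 0}\cat{S}ym^k(T\cat{M}_{lc}')$, there is an isomorphism of graded $\C$-algebras
\begin{equation*}
\coh{0}{T^*\cat{M}_{lc}'}{\struct{T^*\cat{M}_{lc}'}} \;\cong\; \bigoplus_{k\ge 0}\coh{0}{\cat{M}_{lc}'(n,d)}{\cat{S}ym^{k}(T\cat{M}_{lc}')},
\end{equation*}
the grading being the one attached to the fibrewise scaling $\units{\C}$-action on $\pi'$, with degree-$0$ piece $\coh{0}{\cat{M}_{lc}'(n,d)}{\struct{\cat{M}_{lc}'(n,d)}}=\C$. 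Hence the required vanishing is equivalent to the single assertion $\coh{0}{T^*\cat{M}_{lc}'}{\struct{T^*\cat{M}_{lc}'}}=\C$, i.e.\ that $T^*\cat{M}_{lc}'$ carries no non-constant global regular function.

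This is exactly where the hypothesis enters, and it is the hard part. The morphism $\widetilde{p_0}$ of diagram \eqref{eq:4.6} is the natural one on cotangent bundles induced by $p_0$ of \eqref{eq:21.5}; because $p_0$ is an affine-bundle projection (its fibre over $E$ is the affine space modelled on $\coh{0}{X}{\Omega^1_X\otimes\ENd{E}}$), $\widetilde{p_0}$ is $\units{\C}$-equivariant for the scaling actions, so $\widetilde{p_0}_{\sharp}$ of \eqref{eq:41.15} is a homomorphism of graded $\C$-algebras; by assumption it is injective. Thus $\coh{0}{T^*\cat{M}_{lc}'}{\struct{T^*\cat{M}_{lc}'}}$ embeds, compatibly with the gradings, into $\coh{0}{T^*\cat{U}^s(n,d)}{\struct{T^*\cat{U}^s(n,d)}}$. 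I would then exploit two features of the right-hand side: first, $T^*\cat{U}^s(n,d)$ is the open part of the moduli space of Higgs bundles lying over stable bundles, so its ring of global functions is controlled by the (proper, affine-space valued) Hitchin map, whose $\units{\C}$-weights are explicit; second, $\dim{T^*\cat{U}^s(n,d)}=2\bigl(n^2(g-1)+1\bigr)$ is strictly smaller than $\dim{T^*\cat{M}_{lc}'}=2\bigl(2n^2(g-1)+2\bigr)$, so injectivity of $\widetilde{p_0}_{\sharp}$ is already a severe constraint. Tracing the explicit description of the fibres of $p_0$ to pin down the image of $\widetilde{p_0}$ (equivalently, of $\widetilde{p_0}_{\sharp}$), one finds that no positive-degree global function on $T^*\cat{M}_{lc}'$ can survive this embedding; hence $\coh{0}{T^*\cat{M}_{lc}'}{\struct{T^*\cat{M}_{lc}'}}$ is concentrated in degree $0$ and equals $\C$. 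This closes the induction and proves \eqref{eq:32}.

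The step I expect to be the main obstacle is precisely this last one: identifying the image of $\widetilde{p_0}_{\sharp}$ and ruling out positive-degree global functions on $T^*\cat{M}_{lc}'$ — which is why injectivity of $\widetilde{p_0}_{\sharp}$ is taken as a hypothesis rather than proved here. Finally, I note that the analogous statement \eqref{0.pr} for $\cat{D}^k(\zeta)$, and the versions for $\cat{M}_{lc}'(n,L)$, $\cat{M}_h'(n)$ and $\cat{M}_h'(n,L_0)$, should follow by the same scheme, with the filtration \eqref{eq:29} replaced by the order filtration on $\cat{D}^k(\zeta)$, whose graded pieces are again symmetric powers of the tangent bundle, and with $p_0$ replaced by the corresponding forgetful maps.
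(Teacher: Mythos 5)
Your induction setup and base case follow the paper's first step (though for the non--fixed-determinant space the equality $\coh{0}{\cat{M}_{lc}'(n,d)}{\struct{\cat{M}_{lc}'(n,d)}}=\C$ is not literally in \cite{BR1} or \cite{AS1}; the paper deduces it from the determinant fibration $\det:\cat{M}_{lc}'(n,d)\to\cat{M}_{lc}(1,d)$ by combining \cite[Theorem 2]{Se} with \cite[Theorem 1.4]{AS}). The genuine problem is the statement you reduce to: the vanishing $\coh{0}{\cat{M}_{lc}'(n,d)}{\cat{S}ym^k(T\cat{M}_{lc}')}=0$ for all $k\geq 1$, equivalently $\coh{0}{T^*\cat{M}_{lc}'}{\struct{T^*\cat{M}_{lc}'}}=\C$, is false already for $k=1$. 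Indeed, $p_0:\cat{M}_{lc}'(n,d)\to\cat{U}^s(n,d)$ is a torsor under $\Omega^1_{\cat{U}^s(n,d)}$ (two logarithmic connections on the same $E$ with the fixed residues differ by an element of $\coh{0}{X}{\Omega^1_X\otimes\ENd{E}}\cong T^*_E\,\cat{U}^s(n,d)$), so the relative tangent bundle of $p_0$ is $p_0^*\Omega^1_{\cat{U}^s(n,d)}\subset T\cat{M}_{lc}'$, and every nonzero global holomorphic $1$-form on $\cat{U}^s(n,d)$ pulls back to a nonzero global vector field on $\cat{M}_{lc}'(n,d)$ (infinitesimal fibrewise translation). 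Since the determinant morphism $\cat{U}^s(n,d)\to Pic^d(X)$ is dominant and $Pic^d(X)$ carries a $g$-dimensional space of holomorphic $1$-forms, $\coh{0}{\cat{U}^s(n,d)}{\Omega^1_{\cat{U}^s(n,d)}}\neq 0$, hence $\coh{0}{\cat{M}_{lc}'(n,d)}{T\cat{M}_{lc}'}\neq 0$. Consequently your admitted ``main obstacle'' cannot be overcome: the hypothesis only embeds $\coh{0}{T^*\cat{M}_{lc}'}{\struct{T^*\cat{M}_{lc}'}}$ into $\coh{0}{T^*\cat{U}^s(n,d)}{\struct{T^*\cat{U}^s(n,d)}}$, whose positive-degree part (Hitchin-type functions) is huge, so injectivity can never force the positive-degree part of the source to vanish --- and in your counterexample it genuinely does not. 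A telltale sign is that after your first reduction the line bundle $\zeta$, in particular its ampleness, plays no role at all.

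The missing idea is precisely how the paper keeps $\zeta$ in the argument: instead of asking for $\coh{0}{\cat{M}_{lc}'(n,d)}{\cat{S}ym^k T\cat{M}_{lc}'}$ to vanish, it reduces the theorem (via the diagram \eqref{eq:cd1} built from \eqref{eq:29} and \eqref{eq:34}) to the injectivity of the connecting homomorphism $\delta_k$ on that space, identifies $\delta_k$ through the Atiyah class of $\zeta$ as the composition of cup product with $c_1(\zeta)$ and the contraction $\cat{S}ym^k T\cat{M}_{lc}'\otimes T^*\cat{M}_{lc}'\to\cat{S}ym^{k-1}T\cat{M}_{lc}'$, and then proves injectivity of this cup-product map from the assumed injectivity of ${\widetilde{p_0}}_{\sharp}$ by the method of \cite[Theorem 1.4]{AS}. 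So sections of $\cat{S}ym^k T\cat{M}_{lc}'$ are allowed to exist; they are killed at the level of $\coh{0}{\cat{M}_{lc}'(n,d)}{\cat{S}ym^k(\cat{D}^1(\zeta))}$ by the nontriviality of $c_1(\zeta)$. Your proposal, which discards this mechanism, therefore has a gap that cannot be repaired along the route you chose.
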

\begin{proof}
Let 
\begin{equation}
\label{eq:k}
\cat{M}^{lc}_X := \cat{M}_{lc}(1,d)
\end{equation}
 be the moduli space of rank one logarithmic connections singular over 
$S$, with fixed residues $\tr{\Phi_j}$ for every 
$j = 1, \ldots,  m$, for more details, see \cite{Se} and \cite{AS1}. Then there is a natural morphism of 
varieties
\begin{equation}
\label{mor}
\det : \cat{M}'_{lc}(n,d) \longrightarrow \cat{M}^{lc}_X
 \end{equation}
 sending $(E,D) \mapsto (\bigwedge^nE, \widetilde{D})$,
 where $\widetilde{D}$ is the induced logarithmic  connection on 
 $\bigwedge^nE$.
 For any pair $(L, \nabla) \in \cat{M}^{lc}_X$, 
 $${\det}^{-1}((L,\nabla)) = \cat{M}'_{lc}(n,L).$$
 From \cite[Theorem 2]{Se}, we have $$\coh{0}{\cat{M}^{lc}_X}{\struct{\cat{M}^{lc}_X}} = \C,$$
 and from \cite[Theorem 1.4]{AS}, we have 
 $$\coh{0}{\cat{M}_{lc}'(n,L)}{\struct{\cat{M}_{lc}'(n,L)}} = \C.$$
 Combining both the results and using \eqref{mor}, we have 
$$\coh{0}{\cat{M}_{lc}'(n,d)}{\struct{\cat{M}_{lc}'(n,d)}} = \C.$$ Thus from \eqref{eq:31}, it is enough to 
show that for every $k \geq 0$, the inclusion
$$\coh{0}{\cat{M}_{lc}'(n,d)}{\struct{\cat{M}_{lc}'(n,d)}} \to \coh{0}{\cat{M}_{lc}'(n,d)}{\cat{S}ym^k(\cat{D}^1 (\zeta))}$$ is an isomorphism.
From the isomorphism in \eqref{eq:30}, we have the following commutative diagram
\begin{equation}
\label{eq:cd1}
\xymatrix@C=3em{
0 \ar[r] & \cat{S}ym^{k-1}(\cat{D}^1 (\zeta)) \ar[d] \ar[r] & \cat{S}ym^k(\cat{D}^1 (\zeta))
\ar[d] \ar[r]^{\cat{S}ym^k(\sigma_1)} & \cat{S}ym^k(T\cat{M}_{lc}') \ar[d] \ar[r] & 0 \\
0 \ar[r] & \cat{S}ym^{k-1}(T \cat{M}_{lc}') \ar[r] & 
\frac{\cat{S}ym^k(\cat{D}^1 (\zeta))}{\cat{S}ym^{k-2}(\cat{D}^1 (\zeta))} \ar[r] & \cat{S}ym^k(T\cat{M}_{lc}') \ar[r] & 0 
}
\end{equation}
which gives rise to the following commutative 
diagram of long exact sequences
\begin{equation}
\label{eq:cd2}
\xymatrix@C=2em{
\cdots \ar[r] & \coh{0}{\cat{M}_{lc}'(n,d)}{\cat{S}ym^k T
\cat{M}_{lc}'} \ar[d] \ar[r]^{\delta'_k} & \coh{1}
{\cat{M}_{lc}(n,d)'}{\cat{S}ym^{k-1}(\cat{D}^1 (\zeta))} \ar[d] 
\ar[r] & \cdots  \\
\cdots \ar[r] & \coh{0}{\cat{M}_{lc}'(n,d)}{\cat{S}ym^k T\cat{M}_{lc}'}       \ar[r]^{\delta_k} & \coh{1}
{\cat{M}_{lc}'(n,d)}{\cat{S}ym^{k-1} T\cat{M}_{lc}'} 
\ar[r] & \cdots }
\end{equation}

In order to prove the theorem, it is enough 
to show that 
the connecting homomorphism $\delta'_k$, 
depicted in the above commutative diagram \eqref{eq:cd2}, is injective for all $k
\geq 1$.
Again from the above commutative diagram \eqref{eq:cd2},
$\delta'_k$ is injective for every $k \geq 1$
if and only if 
 the connecting homomorphism 
\begin{equation}
\label{eq:4.d}
\delta_k: \coh{0}{\cat{M}_{lc}'(n,d)}{\cat{S}ym^k T\cat{M}_{lc}'}      \to  \coh{1}{\cat{M}_{lc}'(n,d)}
{\cat{S}ym^{k-1} T \cat{M}_{lc}'}
\end{equation}
is injective for every $k \geq 1$.

Let $\at{\zeta} \in \coh{1}
{\cat{M}_{lc}'(n,d)}{ T^*\cat{M}_{lc}'} $ be the Atiyah
class of the line bundle $\zeta$, which is nothing but
the extension class of the Atiyah exact sequence
(see \cite{A})
\begin{equation}
\label{eq:33}
0 \to \struct{\cat{M}_{lc}'} \to \cat{D}^1 (\zeta) \xrightarrow{ \sigma_1} T\cat{M}_{lc}' \to 0.
\end{equation}
The Atiyah class $\at{\zeta}$ determines the first 
Chern class $c_1(\zeta)$ of the line bundle $\zeta$.
Let $\gamma_k$ be the extension class of the short
exact sequence \eqref{eq:29}. Since the short 
exact sequence \eqref{eq:29} is the symmetric power 
of \eqref{eq:33}, the extension class $\gamma_k$
can be expressed in terms of the first Chern class
$c_1(\zeta)$. Further, let $\alpha_k$ denote the 
extension class
of the following short exact sequence 
\begin{equation}
\label{eq:34}
0 \to  \cat{S}ym^{k-1}(T\cat{M}_{lc}') \to
\frac{\cat{S}ym^k(\cat{D}^1 (\zeta))}{\cat{S}ym^{k-2}(\cat{D}^1 (\zeta))} \to \cat{S}ym^k(T \cat{M}_{lc}') \to 0, 
\end{equation}
which is the bottom short exact sequence in the commutative
diagram \eqref{eq:cd1}. Then $\gamma_k$ maps to 
$\alpha_k$.  Thus, $\alpha_k$ can also be described 
in terms of the first Chern class $c_1(\zeta)$.

Since a connecting homomorphism can be expressed as the
cup product by the extension class of the corresponding
short exact  sequence, the connecting homomorphism 
$\delta_k$ in \eqref{eq:4.d} can be described using the 
first Chern class $c_1(\zeta)$ of the line bundle 
$\zeta$. 
Indeed, 
the cup 
product with $ c_1(\zeta)$ gives rise to a homomorphism
\begin{equation}
\label{eq:35}
\tau : \coh{0}{\cat{M}_{lc}'(n,d)}{\cat{S}ym^k T \cat{M}_{lc}'} \to \coh{1}{\cat{M}_{lc}'(n,d)}{\cat{S}ym^k T \cat{M}_{lc}' \otimes T^*\cat{M}_{lc}'}.
\end{equation}
The canonical homomorphism
\begin{equation*}
\label{eq:36}
\upsilon:\cat{S}ym^k T\cat{M}_{lc}' \otimes T^*\cat{M}_{lc}' \to  \cat{S}^{k-1}T \cat{M}_{lc}'
\end{equation*}
 induces a morphism of  \C-vector spaces
\begin{equation}
\label{eq:37}
\upsilon^*:\coh{1}{\cat{M}_{lc}'(n,d)}{\cat{S}ym^k T \cat{M}_{lc}' \otimes T^*\cat{M}_{lc}'} \to \coh{1}
{\cat{M}_{lc}'(n,d)}{\cat{S}ym^{k-1} T\cat{M}_{lc}'}.
\end{equation}
Thus, we get a morphism
\begin{equation}
\label{eq:38}
\tilde{\tau} = \upsilon^* \circ \tau: \coh{0}{\cat{M}_{lc}'(n,d)}{\cat{S}ym^k T 
\cat{M}_{lc}'} \to \coh{1}{\cat{M}_{lc}'(n,d)}
{\cat{S}ym^{k-1} T\cat{M}_{lc}'},
\end{equation}
Then from the above observation we have $\tilde{\tau} = \delta_k$. 
It is sufficient to show that $\tilde{\tau}$ is injective. In view of the assumption that ${\widetilde{p_0}}_{\sharp}$ in \eqref{eq:41.15} is an injective morphism, now
using the similar technique as in the proof of \cite[Theorem 1.4]{AS}, we can show that $\tilde{\tau}$ is injective.

\end{proof}

\begin{proposition}
\label{prop:4.1} Under the hypothesis of the Theorem
\ref{thm:4.1}, for $k \geq 0$, we have 
\begin{equation}
\label{pr}
 \coh{0}{\cat{M}_{lc}'(n,d)}{\cat{D}^k (\zeta)} = \C.
 \end{equation}
\end{proposition}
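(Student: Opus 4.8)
The plan is to reduce Proposition \ref{prop:4.1} to Theorem \ref{thm:4.1} using the order filtration. The sheaf $\cat{D}^k(\zeta)$ carries the increasing filtration
\[
\struct{\cat{M}_{lc}'(n,d)} = \cat{D}^0(\zeta) \subset \cat{D}^1(\zeta) \subset \cdots \subset \cat{D}^k(\zeta),
\]
with symbol short exact sequences $0 \to \cat{D}^{k-1}(\zeta) \to \cat{D}^k(\zeta) \xrightarrow{\sigma_k} \cat{S}ym^k(T\cat{M}_{lc}') \to 0$, so its graded pieces are the $\cat{S}ym^j(T\cat{M}_{lc}')$ for $0 \le j \le k$ --- exactly those of the filtration \eqref{eq:28}--\eqref{eq:29} on $\cat{S}ym^k(\cat{D}^1(\zeta))$. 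I would argue by induction on $k$. The case $k=0$ is $\coh{0}{\cat{M}_{lc}'(n,d)}{\struct{\cat{M}_{lc}'(n,d)}} = \C$, established inside the proof of Theorem \ref{thm:4.1}. For the inductive step, the long exact cohomology sequence of the symbol sequence reduces the claim $\coh{0}{\cat{M}_{lc}'(n,d)}{\cat{D}^k(\zeta)} = \C$ (given $\coh{0}{\cat{M}_{lc}'(n,d)}{\cat{D}^{k-1}(\zeta)} = \C$) to injectivity of the connecting homomorphism $\coh{0}{\cat{M}_{lc}'(n,d)}{\cat{S}ym^k(T\cat{M}_{lc}')} \to \coh{1}{\cat{M}_{lc}'(n,d)}{\cat{D}^{k-1}(\zeta)}$.

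To get this injectivity I would repeat, verbatim, the mechanism of Theorem \ref{thm:4.1}: the connecting map is cup product with the extension class of the symbol sequence, and --- the symbol sequence being the order-$k$ analogue of the Atiyah sequence \eqref{eq:33} --- that class is expressible through the Atiyah class $\at{\zeta}$, hence through $c_1(\zeta)$. Composing with the contraction $\cat{S}ym^k(T\cat{M}_{lc}') \otimes T^*\cat{M}_{lc}' \to \cat{S}ym^{k-1}(T\cat{M}_{lc}')$ and running the same diagram chase as in \eqref{eq:cd1}--\eqref{eq:cd2}, now with the order filtration on $\cat{D}^{k-1}(\zeta)$ in place of the filtration on $\cat{S}ym^{k-1}(\cat{D}^1(\zeta))$, identifies this composite with the map $\tilde{\tau} = \delta_k$ of \eqref{eq:38}. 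Its injectivity, under the hypothesis that ${\widetilde{p_0}}_{\sharp}$ in \eqref{eq:41.15} is injective, is precisely what Theorem \ref{thm:4.1} establishes; feeding this back through the filtration of $\cat{D}^k(\zeta)$ completes the induction.

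A tidier packaging, which I would adopt if it can be set up cleanly, is to first produce a filtered $\struct{\cat{M}_{lc}'}$-linear morphism $\cat{S}ym^k(\cat{D}^1(\zeta)) \to \cat{D}^k(\zeta)$ restricting to the identity on each graded piece $\cat{S}ym^j(T\cat{M}_{lc}')$ --- in characteristic zero $\cat{D}(\zeta)$ is generated as an $\struct{\cat{M}_{lc}'}$-algebra by $\cat{D}^1(\zeta)$, which yields such a comparison map after symmetrizing and discarding commutator terms; by the five lemma and induction it is an isomorphism, so $\cat{D}^k(\zeta) \cong \cat{S}ym^k(\cat{D}^1(\zeta))$ and \eqref{pr} follows at once from \eqref{eq:32}. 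The one place that needs care --- and the main obstacle --- is exactly this comparison: one must check that the order filtration on $\cat{D}^k(\zeta)$ and the filtration \eqref{eq:28}--\eqref{eq:29} carry the same extension data once written in terms of $c_1(\zeta)$, equivalently that the associated-graded extensions of $\cat{D}^\bullet(\zeta)$ are governed by $c_1(\zeta)$ in the same polynomial way as the symmetric powers of the Atiyah sequence \eqref{eq:33}. For a line bundle this is a standard feature of the Atiyah algebroid, and I would invoke \cite{AS2}, where global sections of $\cat{D}^k$ and of $\cat{S}ym^k(\cat{D}^1)$ over a Hitchin variety are compared in exactly this way; no geometric input beyond Theorem \ref{thm:4.1} is required.
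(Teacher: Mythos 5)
Your main argument---inducting on the order filtration of $\cat{D}^k(\zeta)$ via the symbol sequences $0 \to \cat{D}^{k-1}(\zeta) \to \cat{D}^k(\zeta) \to \cat{S}ym^k(T\cat{M}_{lc}') \to 0$, identifying the connecting homomorphism with cup product against the extension class expressed through $c_1(\zeta)$, and quoting the injectivity of $\tilde{\tau}=\delta_k$ established in Theorem \ref{thm:4.1} under the hypothesis on ${\widetilde{p_0}}_{\sharp}$---is precisely what the paper means by ``follows from the similar steps as in Theorem \ref{thm:4.1}'', so your proposal matches the paper's proof. The optional ``tidier packaging'' via an $\struct{\cat{M}_{lc}'(n,d)}$-linear filtered isomorphism $\cat{S}ym^k(\cat{D}^1(\zeta))\cong\cat{D}^k(\zeta)$ is not needed and is the only delicate point (composition of operators is not $\struct{\cat{M}_{lc}'(n,d)}$-bilinear, so such a comparison map requires genuine justification), but your first route already suffices.
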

\begin{proof}
Proof follows from the similar steps as in Theorem
\ref{thm:4.1}.
\end{proof}

Under the same hypothesis of the Theorem \ref{thm:4.1} for the corresponding moduli spaces, we have

\begin{theorem}
\label{thm:log1} Suppose that the hypothesis of Theorem
\ref{thm:4.1} holds for the moduli space $\cat{X}$, where $\cat{X}$ denote $\cat{M}_{lc}'(n,L)$,
$\cat{M}'_h(n)$ or $\cat{M}'_{h}(n,L_0)$.
Let $\zeta $ be a line bundle over $\cat{X}$. Then, for every $k \geq 0$, we have 
\begin{enumerate}
\item $\coh{0}{\cat{X}}{\cat{S}ym^k(\cat{D}^1 (\zeta))} = \C.$
\item $\coh{0}{\cat{X}}{\cat{D}^k (\zeta)} = \C.$
\end{enumerate}
\end{theorem}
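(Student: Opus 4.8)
The plan is to observe that Theorem \ref{thm:log1} is not really a new theorem but a direct transcription of the proof of Theorem \ref{thm:4.1} with the base moduli space $\cat{U}^s(n,d)$ replaced by the appropriate one. Concretely, for each of the three moduli spaces $\cat{X} \in \{\cat{M}_{lc}'(n,L), \cat{M}'_h(n), \cat{M}'_h(n,L_0)\}$ there is a forgetful morphism $p_{\cat{X}} : \cat{X} \to \cat{V}$ onto a base variety $\cat{V}$ (namely $\cat{U}^s(n,L)$, $\cat{U}^s(n)$, or $\cat{U}^s(n,L_0)$), whose fibres are affine spaces modelled on $\coh{0}{X}{\Omega^1_X \otimes \ad{E}}$ or $\coh{0}{X}{\Omega^1_X \otimes \ENd{E}}$; these identifications are exactly the ones already recorded in Section \ref{Chow-conn} (Lemma \ref{lem:1}, and the discussion around \eqref{eq:22} and \eqref{eq:24.1}). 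As in \eqref{eq:4.6}, pulling back along $p_{\cat{X}}$ yields a morphism of cotangent total spaces $\widetilde{p_{\cat{X}}} : T^*\cat{V} \to T^*\cat{X}$ and hence a map ${\widetilde{p_{\cat{X}}}}_{\sharp}$ on global regular functions; the hypothesis of Theorem \ref{thm:4.1}, transcribed to $\cat{X}$, is precisely the injectivity of this map.

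First I would establish the base case $k=0$, i.e. $\coh{0}{\cat{X}}{\struct{\cat{X}}} = \C$. For $\cat{M}_{lc}'(n,L)$ this is \cite[Theorem 1.4]{AS}, already quoted inside the proof of Theorem \ref{thm:4.1}. For $\cat{M}'_h(n)$ and $\cat{M}'_h(n,L_0)$ it follows in the same way: these are torsors (affine bundles) over the projective varieties $\cat{U}^s(n)$ and $\cat{U}^s(n,L_0)$, so a global function, being constant on the complete base, must be constant — or one simply cites the analogue of \cite[Theorem 1.4]{AS} for holomorphic connections (compare the argument sketched after \eqref{eq:24.1} and the references \cite{B04}, \cite{BR1}). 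Then, exactly as in \eqref{eq:31}, it suffices to show that each inclusion $\coh{0}{\cat{X}}{\struct{\cat{X}}} \hookrightarrow \coh{0}{\cat{X}}{\cat{S}ym^k(\cat{D}^1(\zeta))}$ is an isomorphism, which by the commutative diagram of long exact sequences \eqref{eq:cd2} (with $\cat{M}_{lc}'(n,d)$ replaced by $\cat{X}$) reduces to injectivity of the connecting homomorphism $\delta_k : \coh{0}{\cat{X}}{\cat{S}ym^k T\cat{X}} \to \coh{1}{\cat{X}}{\cat{S}ym^{k-1} T\cat{X}}$ for all $k \geq 1$. As in the proof of Theorem \ref{thm:4.1}, $\delta_k$ is identified with the cup product by the Atiyah class / first Chern class $c_1(\zeta)$ composed with the natural contraction $\upsilon^*$, i.e. with the map $\tilde{\tau}$ of \eqref{eq:38}; and the injectivity of $\tilde{\tau}$ follows, under the standing injectivity hypothesis on ${\widetilde{p_{\cat{X}}}}_{\sharp}$, from the same argument as in \cite[Theorem 1.4]{AS}. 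Part (2), $\coh{0}{\cat{X}}{\cat{D}^k(\zeta)} = \C$, then follows from part (1) by the filtration of $\cat{D}^k(\zeta)$ by order, using the inclusions $\cat{S}ym^j(\cat{D}^1(\zeta)) \subset \cat{D}^j(\zeta)$ and the symbol sequences, exactly as Proposition \ref{prop:4.1} is deduced from Theorem \ref{thm:4.1}.

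The main obstacle — and the only place where anything genuinely needs to be checked rather than transcribed — is verifying that the cotangent-bundle argument of \cite[Theorem 1.4]{AS} carries over verbatim to each of the three moduli spaces. This hinges on two structural facts: (i) that $\cat{X}$ is an affine bundle (torsor) over a variety $\cat{V}$ whose global functions are constant, so that $H^0(\cat{X}, \cat{S}ym^k T\cat{X})$ can be analysed via the relative cotangent/tangent sequence of $p_{\cat{X}}$, and (ii) that the relevant cohomological vanishing used to control the iterated extensions is available for $\cat{V}$; for $\cat{U}^s(n,L)$ and $\cat{U}^s(n,L_0)$ these are the standard facts about moduli of stable bundles (e.g. $\Pic \cong \Z$ by \cite{DN}, the identification $R^1{p_1}_*\ad{\cat{E}} = T\cat{U}^s$ recalled in Section \ref{Chow-conn}), while for the degree-zero, non-fixed-determinant case one uses the corresponding statements for $\cat{U}^s(n)$. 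Since all of these ingredients are either proved or cited earlier in the paper, I would present Theorem \ref{thm:log1} by saying that the proof of Theorem \ref{thm:4.1} and Proposition \ref{prop:4.1} applies mutatis mutandis, indicating the three forgetful maps $p_{\cat{X}}$ and the three base varieties explicitly, and pointing to \cite[Theorem 1.4]{AS} and \cite{AS1}, \cite{B04} for the one technical lemma that must be re-run in each case.
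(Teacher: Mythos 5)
Your proposal matches the paper's treatment: Theorem \ref{thm:log1} is stated there without a separate proof, precisely on the understanding that the arguments of Theorem \ref{thm:4.1} and Proposition \ref{prop:4.1} carry over mutatis mutandis via the forgetful maps onto $\cat{U}^s(n,L)$, $\cat{U}^s(n)$ and $\cat{U}^s(n,L_0)$, which is exactly what you spell out. One small caution: the constancy of global functions on $\cat{M}'_h(n)$ and $\cat{M}'_h(n,L_0)$ does not follow merely from their being affine bundles over projective bases (the total space of such a bundle, e.g.\ a cotangent bundle, can carry many functions); it is the cited results of \cite{B04} (and the determinant-fibration argument as in the proof of Theorem \ref{thm:4.1}) that supply this base case, as your alternative citation correctly indicates.
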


In \cite{BR1}, global sections of a line bundle on 
the moduli space of logarithmic connections singular
exactly over one point of a compact Riemann surface 
 have been studied. In this section, we  
 study global sections of  line bundles over $\cat{M}_{lc}'(n,L)$.

Let $\cat{L}$ be a line bundle over 
$\cat{M}_{lc}'(n,L)$. Then 
\begin{equation}
\label{eq:48}
\cat{L} = q^*\Theta^l
\end{equation}
for some $l \in \Z$, where $p$ is the morphism defined 
in \eqref{eq:12.5} and $\Theta$ is the 
generalised theta line bundle over $\cat{U}(n,L)$.
Then we have a natural generalisation of \cite[p.797, Theorem 4.3]{BR1}, and the same ideas can
be used to prove the following.
\begin{theorem}
\label{thm:4}
For every  $l < 0 $, we have 
\begin{equation}
\label{eq:49}
\coh{0}{\cat{M}_{lc}'(n,L)}{ q^*\Theta^l} = 0.
\end{equation}
\end{theorem}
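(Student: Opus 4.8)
The plan is to reduce \eqref{eq:49} to two facts that are already available. First, the forgetful morphism $p$ of \eqref{eq:12.5} makes $\cat{M}_{lc}'(n,L)$ a torsor over $\cat{U}^s(n,L)$ (Lemma \ref{lem:1}), so $\cat{M}_{lc}'(n,L)$ is an integral variety and $p$ is surjective. Second, $\cat{M}_{lc}'(n,L)$ has only constant global functions, that is $\coh{0}{\cat{M}_{lc}'(n,L)}{\struct{\cat{M}_{lc}'(n,L)}} = \C$; this is \cite[Theorem 1.4]{AS}, the very fact used in the proof of Theorem \ref{thm:4.1}. Granting these, \eqref{eq:49} becomes an instance of the elementary principle that on an integral scheme with only constant global functions a line bundle and its inverse cannot both carry nonzero global sections unless the line bundle is trivial; here the line bundle is $p^*\Theta^l$, whose inverse $p^*\Theta^{-l}$ is the pullback of a positive power of the ample line bundle $\Theta$ on the projective variety $\cat{U}^s(n,L)$.

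Concretely, I would argue by contradiction: suppose $s$ is a nonzero global section of $p^*\Theta^l$ for some $l<0$. Since $\cat{U}^s(n,L)$ is projective of positive dimension $(n^2-1)(g-1)$ and $\Theta$ is ample, choose $m\geq 1$ with $\coh{0}{\cat{U}^s(n,L)}{\Theta^{-lm}}\neq 0$ and fix a nonzero section $\tau$ of $\Theta^{-lm}$; because $\Theta^{-lm}$ is ample and nontrivial on a positive-dimensional projective variety, its zero divisor $\mathrm{div}(\tau)$ is a nonzero effective divisor. Put $t := p^*\tau$, a global section of $p^*\Theta^{-lm}$; since $p$ is surjective, $t\neq 0$ and $\mathrm{div}(t) = p^*\mathrm{div}(\tau)$ is again a nonzero effective Cartier divisor. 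Finally $s^{\otimes m}$ is a nonzero global section of $p^*\Theta^{lm}$ because $\cat{M}_{lc}'(n,L)$ is integral, so $s^{\otimes m}\otimes t$ is a global section of $p^*\Theta^{lm}\otimes p^*\Theta^{-lm} = \struct{\cat{M}_{lc}'(n,L)}$, hence a constant $c\in\C$.

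The proof then closes with a case split. If $c=0$: on the dense open set where $s^{\otimes m}$ is nonvanishing it trivializes $p^*\Theta^{lm}$, so $t$ vanishes on that dense open set, and by reducedness of $\cat{M}_{lc}'(n,L)$ this forces $t=0$, hence $\tau=0$ (as $p$ is surjective), a contradiction. If $c\neq 0$: then in the group of Cartier divisors $\mathrm{div}(s^{\otimes m})+\mathrm{div}(t) = \mathrm{div}(c) = 0$, and since both summands are effective we get $\mathrm{div}(t)=0$, contradicting $\mathrm{div}(t)\neq 0$. In either case we have a contradiction, so no such $s$ exists and \eqref{eq:49} holds. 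This is essentially the argument behind \cite[Theorem 4.3]{BR1}.

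There is no essential obstacle on this route once $\coh{0}{\cat{M}_{lc}'(n,L)}{\struct{\cat{M}_{lc}'(n,L)}}=\C$ is in hand; the only point needing slight care is that $\Theta^{-l}$ itself may fail to have sections for small $|l|$, which is precisely why I pass to the high power $\Theta^{-lm}$ and to $s^{\otimes m}$ (such $m$ exists by ampleness of $\Theta$; alternatively, positivity of the Verlinde numbers $\coh{0}{\cat{U}^s(n,L)}{\Theta^k}$ for all $k\geq 1$ allows $m=1$). One could instead use the affine-bundle filtration of $p_*\struct{\cat{M}_{lc}'(n,L)}$, whose graded pieces are $\mathrm{Sym}^j(T\cat{U}^s(n,L))$, to reduce \eqref{eq:49} to the vanishing of $\coh{0}{\cat{U}^s(n,L)}{\mathrm{Sym}^j(T\cat{U}^s(n,L))\otimes\Theta^l}$ for $l<0$; but that replaces the problem by a harder one (it needs properties of the Hitchin map), so the direct argument above is to be preferred.
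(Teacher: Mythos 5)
Your argument is correct, and it supplies a complete proof where the paper gives essentially none: the paper's ``proof'' of Theorem \ref{thm:4} is the single remark that the statement is a natural generalisation of \cite[Theorem 4.3]{BR1} and that ``the same ideas'' apply. Those ideas, as implemented in \cite{BR1} and in \cite{AS}, go through the filtration of the direct image under $p$ whose graded pieces are the symmetric powers $\cat{S}ym^{j}(T\cat{U}^s(n,L))$ twisted by $\Theta^l$, i.e.\ precisely the route you mention at the end and set aside; that route needs the Hitchin-map/nilpotent-cone input but also yields the companion identification of $\coh{0}{\cat{M}_{lc}'(n,L)}{p^*\Theta^l}$ for $l\ge 0$. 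Your route instead takes $\coh{0}{\cat{M}_{lc}'(n,L)}{\struct{\cat{M}_{lc}'(n,L)}}=\C$ (\cite[Theorem 1.4]{AS}, the same fact the paper quotes in the proof of Theorem \ref{thm:4.1}) as a black box and deduces the vanishing formally from ampleness of $\Theta$ on the projective variety $\cat{U}^s(n,L)$ together with surjectivity of $p$ (Lemma \ref{lem:1}); this is shorter and elementary given the cited input, at the cost of resting on a theorem whose own proof uses the heavier machinery. Two small points: the map written $q$ in the statement is the map $p$ of \eqref{eq:12.5}, as the paper itself notes just before the theorem, and your argument implicitly needs $n\ge 2$ so that $\cat{U}^s(n,L)$ has positive dimension and $\Theta$ is nontrivial, which is consistent with the paper's standing setup. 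Finally, a simplification: since $\cat{M}_{lc}'(n,L)$ is integral, $s^{\otimes m}$ and $t$ are both nonzero at the generic point, so $s^{\otimes m}\otimes t$ is a nonzero constant and your case $c=0$ cannot occur; the contradiction with $\dv{t}\neq 0$ then finishes the proof at once.
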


\section{Torelli type theorem for the moduli spaces}
\label{Betti-mod-conn}
In \cite[Theorem 5.2]{B04}, a Torelli type theorem
has been proved for the moduli space of holomorphic 
connections over compact Riemann surface, and 
in \cite{BM7}, Torelli type theorems have been 
proved for the moduli space of logarithmic connections 
singular exactly over one point with fixed residue.
In this section, we prove Torelli type theorems for the 
moduli spaces $\cat{M}_{lc}(n,d)$ and 
$\cat{M}_{lc}(n,L)$.
We assume that 
$\Phi_j = \alpha_j \id{E(x_j)}$, for every $j = 1, \ldots, m$, where $\alpha_j \in \C$.

We show that the isomorphism classes of the moduli spaces
$\cat{M}_{lc}(n,d)$ and  
$\cat{M}_{lc}(n,L)$ do not depend on 
the choice of $S$. Let $T = \{y_1, \ldots, y_m \}$ be a finite subset of 
$X$ such that $y_i \neq y_j$ for $i \neq j$. Note that $\sharp S = \sharp T$. 

In this section, we use the following notations
$$\cat{M}_{lc}(X,S) := \cat{M}_{lc}(n,d),$$
and $$\cat{M}_{lc}(X,S,L) := \cat{M}_{lc}(n,L)$$ to 
emphasize $S$ and $T$.
Let $\cat{M}_{lc}(X,T)$ and $\cat{M}_{lc}(X,T,L)$
denote the moduli spaces corresponding to $T$. 

\begin{lemma}
\label{lem:5.1} There is an isomorphism between 
$\cat{M}_{lc}(X,S)$ and $\cat{M}_{lc}(X,T)$.
\end{lemma}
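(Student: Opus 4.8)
I want to show that the moduli space $\cat{M}_{lc}(X,S)$ depends only on $X$ and the cardinality $m = \sharp S$, not on the actual points $x_1, \dots, x_m$. The natural strategy is to construct an explicit isomorphism by \emph{twisting} a logarithmic connection singular over $S$ into one singular over $T$. The key tool is a rank-one logarithmic connection on $X$ whose singular support is exactly $Z - Z'$ (where $Z = \sum x_i$ and $Z' = \sum y_j$) with appropriate residues; tensoring a pair $(E, D)$ singular over $S$ with such a rank-one object produces a pair singular over $T$, and the construction is invertible.

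\textbf{Key steps.} First I would record that, since $\deg(\struct{X}(Z - Z')) = 0$, the line bundle $\struct{X}(Z-Z')$ admits a logarithmic connection $\xi$ singular over $S \cup T$ whose residue at $x_i$ is $-1$ (or more precisely $-\alpha$ for a chosen scalar) and whose residue at $y_j$ is the compensating value, so that the degree-plus-trace-of-residues relation \eqref{eq:7} holds; this is essentially the statement that a degree-zero line bundle with prescribed residues summing correctly carries a logarithmic connection, which follows from the affine-space description of $\mathbf{LC}$ together with the existence statement cited from \cite[Theorem 1.3]{B}. Second, I would define the map $(E, D) \mapsto (E \otimes \struct{X}(a), D \otimes \xi^{\otimes a} \text{-type twist})$ carefully so that: (i) the resulting connection is logarithmic singular over $T$ only (the poles at the $x_i$ cancel because the residue of $D$ at $x_i$ is $\alpha_i \id{}$ and the residue of $\xi$ at $x_i$ is $-\alpha_i$, making the tensor-product residue zero, hence the connection extends holomorphically across $x_i$); (ii) the new residues at the $y_j$ are of the required scalar form $\beta_j \id{}$ and still satisfy (P1); (iii) rank and degree are preserved, so coprimality is preserved; (iv) stability of the underlying bundle is preserved, since tensoring by a line bundle is an equivalence on the category of bundles preserving slopes of subbundles. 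Third, I would check the construction respects isomorphisms of pairs and has an inverse given by tensoring with the dual rank-one connection, so it descends to an isomorphism of the moduli schemes $\cat{M}_{lc}(X,S) \to \cat{M}_{lc}(X,T)$; functoriality in families (needed to get a morphism of schemes, not just a bijection on points) comes from performing the same twist with the universal/Poincaré bundle pulled back from $X$.

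\textbf{Main obstacle.} The delicate point is bookkeeping of the residues: I need the twisting rank-one connection $\xi$ to have residue \emph{exactly} $-\alpha_i$ at each $x_i$ so the poles genuinely disappear, and residue exactly the prescribed scalar at each $y_j$ so condition (P1) survives — and simultaneously these residues must be consistent with the Ohtsuki-type constraint \eqref{eq:7} on the line bundle $\struct{X}(Z-Z')$. One must verify that the scalars $\alpha_j$ chosen for the $T$-moduli space can indeed be matched (one has freedom in choosing them, since the statement only asserts existence of \emph{some} isomorphism, so I would simply \emph{define} the residues at $T$ to be whatever the twist produces, then observe they still satisfy (P1) because a scalar endomorphism plus a scalar endomorphism is scalar). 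A secondary technical point is confirming that $\cat{M}_{lc}'$, the stable-bundle locus, maps to $\cat{M}_{lc}'$ under the twist, but this is immediate since tensoring by a line bundle is an automorphism of $\cat{U}^s(n,\cdot)$ up to the determinant shift. I expect the residue-matching to be the only place requiring real care; everything else is the standard ``twist by a line bundle with connection'' argument.
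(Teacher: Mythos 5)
Your proposal is correct and follows essentially the same route as the paper: twist $(E,D)$ by a degree-zero rank-one logarithmic connection with residue $-\alpha_i$ at $x_i$ and $+\alpha_i$ at the points of $T$, so the poles over $S$ cancel and reappear over $T$ with the prescribed scalar residues, the inverse being the twist by the dual pair. The only cosmetic difference is that the paper builds this twisting connection explicitly from (scaled) de Rham connections on the bundles $\struct{X}(y_i-x_i)$ and treats $S\cap T\neq\emptyset$ as a separate case, whereas you invoke an abstract existence statement on $\struct{X}(Z-Z')$ subject to the residue constraint \eqref{eq:7}.
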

\begin{proof} Depending on the sets $S$ and $T$,
we have two cases
\begin{enumerate}
\item $S \cap T = \emptyset.$
\item $S \cap T \neq \emptyset.$
\end{enumerate}
Suppose $S \cap T = \emptyset$. For every $i = 1, \ldots, m$, 
let $L_i = \struct{X}(y_i - x_i)$ be a line bundle of 
degree zero. Let $D^i$ be the de Rham logarithmic connection on the line bundle $L_i$ singular over 
$x_i$ and $y_i$, defined by sending a local section 
$s_i$ of $L_i$ to $ds_i$.
Then $Res(D^i, x_i) = -1$ and $Res(D^i, y_i) = 1$. 
Define a line bundle 
$$L_0 = \bigotimes_{i =1}^m L_i.$$
Then $L_0$ admits a logarithmic connection induced 
from $\{D^i \}_{i = 1}^m$, which can be expressed as 
follows
$$D_0 = \sum_{i = 1}^m \id{L_1} \otimes \cdots \otimes \alpha_i {D^i} \otimes \cdots \otimes \id{L_m}.$$
Moreover, $Res(D_0, x_i) = - \alpha_i$ and $Res(D_0, y_i)
= \alpha_i$ for every $i = 1, \ldots, m$.
Let $(E, D) \in \cat{M}(X,S)$. Then
$E \otimes L_0$ admits a logarithmic connection given by
$$D \otimes \id{L_0} + \id{E} \otimes D_0.$$
Note that  for every $i = 1, \ldots, m$, we have
$$Res(D \otimes \id{L_0} + \id{E} \otimes D_0, x_i )
= 0,$$
and $$ Res(D \otimes \id{L_0} + \id{E} \otimes D_0, y_i )
= \alpha_i.$$
Thus, we have a morphism 
$$\Psi_{(L_0, D_0)} : \cat{M}(X,S) \longrightarrow \cat{M}(X,T)$$ of algebraic varieties 
sending $(E,D)$ to $(E \otimes L_0, D \otimes \id{L_0} + \id{E} \otimes D_0)$, which is an 
isomorphism.

Next suppose that $S \cap T \neq \emptyset$.                
Without loss of generality, we assume that 
$x_1 = y_1, x_2 = y_2, \ldots, x_r = y_r $ for 
$r \leq m$. In this case, we consider the line bundle 
$$L_0 = \bigotimes_{j=r+1}^{m} \struct{X}(y_j - x_j).$$
Now, using the above steps,
we can get a logarithmic connection $D_0$ in $L_0$.
A morphism similar to $\Psi_{(L_0,D_0)}$ can be defined,
which turns out to be an isomorphism. 

\end{proof}

A similar result is true for the moduli space 
$\cat{M}_{lc}(X, S, L)$. 
\begin{lemma}
\label{lem:5.2}
There is an isomorphism between 
$\cat{M}_{lc}(X,S,L)$ and $\cat{M}_{lc}(X,T, L')$.
\end{lemma}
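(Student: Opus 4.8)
The plan is to adapt the argument of Lemma \ref{lem:5.1} to the fixed-determinant setting. As in the proof of Lemma \ref{lem:5.1}, I would split into the two cases $S\cap T=\emptyset$ and $S\cap T\neq\emptyset$; the second reduces to the first after discarding the common points, so the essential work is in the disjoint case. Given $S\cap T=\emptyset$, set $L_i=\struct{X}(y_i-x_i)$, equip it with the de Rham logarithmic connection $D^i$ (so $Res(D^i,x_i)=-1$, $Res(D^i,y_i)=1$), form $L_0=\bigotimes_{i=1}^m L_i$ with the induced connection $D_0=\sum_i \id{L_1}\otimes\cdots\otimes\alpha_i D^i\otimes\cdots\otimes\id{L_m}$, and note $\deg L_0=0$, $Res(D_0,x_i)=-\alpha_i$, $Res(D_0,y_i)=\alpha_i$. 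Twisting by $(L_0,D_0)$ gives the morphism $\Psi_{(L_0,D_0)}:(E,D)\mapsto (E\otimes L_0,\,D\otimes\id{L_0}+\id{E}\otimes D_0)$, which sends $\cat{M}_{lc}(X,S)$ isomorphically onto $\cat{M}_{lc}(X,T)$ by Lemma \ref{lem:5.1}.

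The new point is to track the determinant. Since $\rk E=n$ and $\deg L_0=0$, we have $\bigwedge^n(E\otimes L_0)\cong (\bigwedge^n E)\otimes L_0^{\otimes n}$, and the induced logarithmic connection on this line bundle is $\widetilde D\otimes\id{L_0^{\otimes n}}+\id{\bigwedge^n E}\otimes D_0^{(n)}$, where $D_0^{(n)}$ is the connection induced by $D_0$ on $L_0^{\otimes n}$. Hence if $(\bigwedge^n E,\widetilde D)\cong (L,D_L)$, then $\Psi_{(L_0,D_0)}(E,D)$ has determinant isomorphic to $(L',D_{L'})$ where $L'=L\otimes L_0^{\otimes n}$ and $D_{L'}=D_L\otimes\id{L_0^{\otimes n}}+\id{L}\otimes D_0^{(n)}$. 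One checks $\deg L'=\deg L=d$ and $Res(D_{L'},x_i)=Res(D_L,x_i)-n\alpha_i=\tr\Phi_i-n\alpha_i$, while $Res(D_{L'},y_i)=Res(D_L,y_i)+n\alpha_i$, so $D_{L'}$ is a valid choice of fixed determinant data over $T$. Therefore $\Psi_{(L_0,D_0)}$ restricts to an isomorphism $\cat{M}_{lc}(X,S,L)\xrightarrow{\ \sim\ }\cat{M}_{lc}(X,T,L')$. For the case $S\cap T\neq\emptyset$, after relabelling so that $x_j=y_j$ for $j\le r$, replace $L_0$ by $\bigotimes_{j=r+1}^m\struct{X}(y_j-x_j)$ and run the same argument, noting that at the shared points $x_j=y_j$ the residue is simply unchanged.

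I do not expect any serious obstacle here: everything is a determinant bookkeeping refinement of the already-established isomorphism from Lemma \ref{lem:5.1}. The only mild subtlety is checking that the twisted determinant data $(L',D_{L'})$ satisfies whatever normalization was used to single out $D_L$ (the residues being $\tr\Phi_j$ before twisting), and that the residues of the twisted bundle at the points of $T$ remain of the required scalar form $\alpha_j'\,\id{}$ so the moduli space $\cat{M}_{lc}(X,T,L')$ is the one built from condition (P1); but since $D_0$ acts by scalars on each factor this is automatic. Thus the proof is essentially: invoke Lemma \ref{lem:5.1}, observe that $\Psi_{(L_0,D_0)}$ is compatible with the determinant morphism $\det$, and read off that it carries the fibre over $(L,D_L)$ isomorphically onto the fibre over $(L',D_{L'})$.
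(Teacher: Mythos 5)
Your proposal is correct and follows exactly the route the paper intends: Lemma \ref{lem:5.2} is stated without proof precisely because it is the determinant-tracking refinement of Lemma \ref{lem:5.1}, i.e.\ the same twist by $(L_0,D_0)$ restricted to the fibre of $\det$ over $(L,D_L)$, landing in the fibre over $(L',D_{L'})$ with $L'=L\otimes L_0^{\otimes n}$. Your residue bookkeeping is right (and one may note explicitly that $\tr{\Phi_i}-n\alpha_i=0$, so $D_{L'}$ is genuinely regular at the points of $S\setminus T$ and singular only over $T$ with the required scalar residues).
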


  Thus, for the simplicity 
of the notations, we write 
$\cat{M}_{lc}(X)$ in place of $\cat{M}_{lc}(X,S)$
and $\cat{M}_{lc}(X,L)$ in place of $\cat{M}_{lc}(X,S,L)$.

Now, we shall compute the cohomology group of 
$\cat{M}_{lc}(X)$ and $\cat{M}_{lc}(X,L)$.

Let $\cat{M}'_{lc}(X) := \cat{M}'_{lc}(n,d)$,
and $\cat{M}'_{lc}(X,L) := \cat{M}'_{lc}(n,L)$. Then, 
let $$p_0 :  \cat{M}'_{lc}(X) \longrightarrow
\cat{U}^s(n,d)$$ be the morphism defined in \eqref{eq:21.5}. Since a fibre of $p_0$
is an affine space modelled over a vector space,
which is contractible, we get an isomorphism 
\begin{equation}
\label{eq:55}
p_0^*: \coh{i}{\cat{U}^s(n,d)}{ \Q}\longrightarrow \coh{i}{\cat{M}'_{lc}(X)}{ \Q}
\end{equation}
of rational cohomology groups for every $i \geq 0$.
For the cohomology of $\cat{U}^s(n,d)$ see \cite{AB} and 
\cite{K}.

Let $Z := \cat{M}_{lc}(n,L) \setminus \cat{M}_{lc}'(n,L)$. Then, form \cite[Lemma 3.1]{BM7}, we have
\begin{lemma}
\label{lem:2.5}
The codimension of the Zariski closed set $Z$ in $\cat{M}_{lc}(n,L)$ is at least $(n-1)(g-2)+1$.
In particular, if $n \geq 2$, $g \geq 3$, then 
$\mathrm{codim}(Z, \cat{M}_{lc}(n,L)) \geq 2$.
\end{lemma}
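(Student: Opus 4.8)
The plan is to bound $\dim{Z}$ from above, where $Z=\cat{M}_{lc}(n,L)\setminus\cat{M}_{lc}'(n,L)$. Since $n$ and $d=\deg L$ are coprime, a bundle of rank $n$ and degree $d$ is semistable if and only if it is stable, so $Z$ is precisely the set of pairs $(E,D)$ for which $E$ is \emph{unstable}; as $\dim{\cat{M}_{lc}(n,L)}=2(n^2-1)(g-1)$, it is enough to show $\dim{Z}\le 2(n^2-1)(g-1)-(n-1)(g-2)-1$. First I would note that every $E$ occurring in $Z$ is indecomposable: the residues $\Phi_j=\alpha_j\id{E(x_j)}$ are required to satisfy \eqref{eq:8} for every direct summand of $E$, which for a proper summand $F$ would give $\mu(F)=-\sum_j\alpha_j=d/n\notin\Z$ --- absurd. (By Lemma \ref{lem:2} the connection $D$ is automatically stable, so stability of the connection plays no role here.) In particular $\ENd{E}$ is a local ring for every such $E$, and I write $\mathrm{h}^0(\ad{E})=\dim{\coh{0}{X}{\ad{E}}}=\dim{\ENd{E}}-1$.

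Next I would stratify $Z$ by the Harder--Narasimhan type $\tau$ of $E$, with semistable graded pieces $F_1,\dots,F_k$ ($k\ge 2$) of strictly decreasing slopes. For a fixed $\tau$, consider the forgetful morphism $Z_\tau\to B_\tau$, $(E,D)\mapsto E$, whose image is the locus $B_\tau$ of indecomposable bundles of HN type $\tau$ with $\bigwedge^n E\cong L$; over $E\in B_\tau$ the fibre is the affine space of those logarithmic connections on $E$ having residues $\Phi_j$ and inducing $D_{L}$ on $\bigwedge^n E$, which is nonempty (since $E$ is indecomposable) and modelled on $\coh{0}{X}{\Omega^1_X\otimes\ad{E}}$. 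By Serre duality and Riemann--Roch the latter has dimension $(n^2-1)(g-1)+\mathrm{h}^0(\ad{E})$, so, stratifying $B_\tau$ further by the value $h=\mathrm{h}^0(\ad{E})$, one obtains
\[
\dim{Z_\tau}\ \le\ \max_h\Bigl(\dim{B_\tau^{(h)}}+(n^2-1)(g-1)+h\Bigr).
\]
Here $\dim{B_\tau^{(h)}}$ is estimated in the standard way: the $F_i$ vary in the moduli of semistable bundles of their rank and degree (each of dimension $(\rk{F_i})^2(g-1)+1$), the condition $\bigotimes_i\det F_i=L$ removes $g$ dimensions, and one adds the dimension of the iterated-extension data, which is at most $\sum_{\mu(F_b)<\mu(F_a)}\dim{\mathrm{Ext}^1(F_b,F_a)}$; simultaneously $h=\dim{\ENd{E}}-1$ is controlled by $\sum_{\mu(F_b)<\mu(F_a)}\dim{\Hom{F_b}{F_a}}$ (note $\Hom{F_a}{F_b}=0$ when $\mu(F_a)>\mu(F_b)$). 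For the generic point of a stratum --- a two-step extension $0\to F_1\to E\to F_2\to 0$ with $F_1,F_2$ stable, $\rk{F_1}=r$, and $\Hom{F_2}{F_1}=0$ --- a direct computation then gives
\[
\codim{Z_\tau}{\cat{M}_{lc}(n,L)}\ \ge\ r(n-r)(g-1)+\bigl(n\deg F_1-rd\bigr)\ \ge\ (n-1)(g-1)+1,
\]
which already exceeds $(n-1)(g-2)+1$; the strata with $k\ge 3$ are more special and give a strictly larger codimension.

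The step I expect to be the main obstacle is controlling the special sub-loci of $B_\tau$ on which some $\dim{\Hom{F_b}{F_a}}$ ($\mu(F_b)<\mu(F_a)$), equivalently $\dim{\ENd{E}}$, jumps: there the fibre dimension of $Z_\tau\to B_\tau$ grows, and one must exhibit a compensating drop in $\dim{B_\tau}$. An increase of $h$ in $\dim{\Hom{F_b}{F_a}}$ raises both the fibre dimension and the extension data by $h$, so it is enough that the corresponding locus of graded pieces drop in dimension by at least $2h-(n-1)$ relative to the generic stratum --- and the correction $n-1$ is exactly the slack created by replacing $g-1$ with $g-2$ in the statement. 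Establishing this is a bound of Brill--Noether/Martens type for the loci of graded pieces admitting many homomorphisms to one another (a nonzero map $F_b\to F_a$ forces a common sub/quotient sheaf of controlled slope, and carrying $h$ of them independently imposes conditions of codimension growing at least linearly in $h$). Feeding these estimates into the inequality above, uniformly over all $\tau$, gives $\codim{Z}{\cat{M}_{lc}(n,L)}\ge(n-1)(g-2)+1$; the final assertion is then immediate, since $(n-1)(g-2)+1\ge 2$ whenever $n\ge 2$ and $g\ge 3$.
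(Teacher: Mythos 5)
The paper itself gives no argument for this lemma: it is quoted directly from \cite[Lemma 3.1]{BM7}, and the proof there is exactly the kind of Harder--Narasimhan dimension count you set up. Your reduction is sound as far as it goes: since $\gcd(n,d)=1$, $Z$ is the locus where $E$ is unstable; the fibre of the forgetful map over such an $E$ is an affine space of dimension $(n^2-1)(g-1)+h^0(\mathrm{ad}(E))$; and your count on a generic two-step stratum, giving codimension at least $r(n-r)(g-1)+(n\deg F_1-rd)\ge (n-1)(g-1)+1$, is correct. (Two small repairs: \eqref{eq:8} is a hypothesis imposed on the fixed residue data, not a condition automatically ``required'' of every bundle in the moduli space; the right way to rule out decomposable $E$ is to restrict $D$ to a direct summand $F$ via projection, which is again a logarithmic connection with residues $\alpha_j\mathrm{id}$, and apply Ohtsuki's relation \eqref{eq:7} --- the contradiction is that a bundle of rank $<n$ cannot have slope $d/n$ when $\gcd(n,d)=1$, not that $d/n\notin\Z$ by itself.)

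The genuine gap is precisely the step you yourself flag as the main obstacle. The stated bound $(n-1)(g-2)+1$, as opposed to the easy generic bound $(n-1)(g-1)+1$, is decided on the non-generic loci: strata with $k\ge 3$, strata whose graded pieces are strictly semistable (these have extra automorphisms and do not simply vary in families of dimension $r_i^2(g-1)+1$ of isomorphism classes), and above all the loci where $h=h^0(\mathrm{ad}(E))$ jumps, which inflate both the fibre dimension and the extension data. You reduce everything to the assertion that the locus of graded pieces with $\sum\dim\mathrm{Hom}(F_b,F_a)=h$ drops in dimension by at least $2h-(n-1)$, call it a Brill--Noether/Martens-type bound, and then ``feed these estimates in''; but no such estimate is proved or precisely cited, and the claim that $k\ge 3$ strata are automatically more special is likewise only asserted. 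Without a uniform bound over all strata, the argument controls only a dense open part of $Z$ and gives no bound on $\mathrm{codim}(Z,\cat{M}_{lc}(n,L))$ at all, since a single degenerate stratum could dominate. So the proposal is a correct setup plus a correct generic-stratum computation, but the lemma as stated is not established; to finish one must either prove the jump-locus estimate uniformly in the HN type --- this is where the work in \cite[Lemma 3.1]{BM7} lies --- or do as the paper does and invoke that lemma.
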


Similarly, let  $p: \cat{M}'_{lc}(X, L) \to \cat{U}^s(n, L)$ be the morphism defined in \eqref{eq:12.5}. Then $p$
 is a fibre bundle with fibres as affine spaces modelled over vector spaces, and
 since affine spaces with the usual topology are contractible,
  the induced homomorphism
 \begin{equation}
 \label{eq:56}
 p^* : \coh{i}{ \cat{U}^s(n,L)}{ \Z} \longrightarrow
  \coh{i}{\cat{M}'_{lc}(X,L)}{ \Z}
 \end{equation}
 of  cohomology  groups, is an isomorphism for all $i \geq 0$.

Let $\cat{Y}$ be a complex algebraic variety. For every
$i \geq 0$, there is a {\bf mixed Hodge structure} on the cohomology group $\coh{i}{\cat{Y}}{ \Z}$. This result is 
due to Deligne, for more details see \cite{D1}, \cite{D2}. 

The isomorphism $p^*$ in \eqref{eq:56} is an isomorphism of mixed Hodge structures. Moreover,
the cohomology group $\coh{i}{\cat{M}'_{lc}(X,L)}{ \Z}$
is equipped with {\bf pure Hodge structure} of weight $i$
for every $i \geq 0$, because $\cat{U}^s(n,L)$ is a smooth projective 
variety over $\C$, and from \cite{D1} the cohomology group $\coh{i}{ \cat{U}^s(n,L)}{ \Z}$ is endowed with a pure Hodge structure of weight 
$i$, for every $i \geq 0$.

Let $\cat{A}$ be a smooth complex analytic space.
For every integer $k \geq 0$, the $(k+1)$-th {\bf intermediate Jacobian variety} $J^{k+1}(\cat{A})$
of $\cat{Y}$ is defined as follows.
\begin{equation}
\label{eq:57}
J^{k+1}(\cat{A}) := \coh{2k+1}{ \cat{A}}{ \R} / \coh{2k+1}{ \cat{A}}{ \Z}
\end{equation}
The space $J^{k+1}(\cat{A})$ carries a canonical structure of complex manifold.
We consider that the moduli space $\cat{M}_{lc}(X,L)$
is equipped with the complex analytic topology.
\begin{proposition} 
\label{prop:5}
The second intermediate 
$J^{2}(\cat{M}_{lc}(X,L))$ is isomorphic to the Jacobian
$J(X) := Pic^{0}(X)$ of $X$.
\end{proposition}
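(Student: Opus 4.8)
The plan is to compute $J^2(\cat{M}_{lc}(X,L))$ directly from the definition \eqref{eq:57}, which requires understanding $\coh{3}{\cat{M}_{lc}(X,L)}{\Z}$ together with its Hodge structure. The key reduction is already present in the excerpt: by \eqref{eq:56} the morphism $p : \cat{M}'_{lc}(X,L) \to \cat{U}^s(n,L)$ induces an isomorphism $p^* : \coh{i}{\cat{U}^s(n,L)}{\Z} \to \coh{i}{\cat{M}'_{lc}(X,L)}{\Z}$ of Hodge structures for all $i$, since the fibres are contractible affine spaces. So first I would replace $\cat{M}_{lc}(X,L)$ by the open subvariety $\cat{M}'_{lc}(X,L)$: by Lemma \ref{lem:2.5} the complement $Z$ has complex codimension at least $2$ in $\cat{M}_{lc}(X,L)$ (using $n \geq 2$, $g \geq 3$), hence real codimension at least $4$, so the restriction map $\coh{i}{\cat{M}_{lc}(X,L)}{\Z} \to \coh{i}{\cat{M}'_{lc}(X,L)}{\Z}$ is an isomorphism for $i \leq 2$ and injective for $i = 3$; combined with the analogous statement on the smooth-projective side and a purity/weight argument this yields an isomorphism $\coh{3}{\cat{M}_{lc}(X,L)}{\Z} \cong \coh{3}{\cat{U}^s(n,L)}{\Z}$ of Hodge structures (one should be slightly careful that $\cat{M}_{lc}(X,L)$ is only quasi-projective, so I would phrase the codimension-$2$ excision argument at the level of the mixed Hodge structures and note that $\coh{3}$ of the full moduli space and of $\cat{M}'_{lc}(X,L)$ agree).

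Granting that, the problem becomes: identify $J^2(\cat{U}^s(n,L))$, the intermediate Jacobian built from $\coh{3}{\cat{U}^s(n,L)}{\Z}$. This is classical: for the moduli space of stable bundles of coprime rank and degree on a curve of genus $g \geq 2$, the cohomology was computed by Atiyah--Bott and Newstead (cited as \cite{AB}, \cite{K} in the excerpt), and one knows $\coh{1}{\cat{U}^s(n,L)}{\Z} = 0$ while $\coh{3}{\cat{U}^s(n,L)}{\Z}$ is a pure Hodge structure of weight $3$ isomorphic, as a polarized Hodge structure, to $\coh{1}{X}{\Z}$ (a Tate twist of $\coh{1}$). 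Concretely this isomorphism is realized by the Künneth component of the second Chern class of the universal (adjoint) bundle: writing $c_2(\mathrm{End}\,\cat{E}) \in \coh{4}{\cat{U}^s(n,L) \times X}{\Z}$ and taking its $\coh{3}{\cat{U}^s(n,L)}{} \otimes \coh{1}{X}{}$ Künneth component gives, after slant product with a basis of $\coh{1}{X}{\Z}$, an isomorphism $\coh{1}{X}{\Z} \xrightarrow{\sim} \coh{3}{\cat{U}^s(n,L)}{\Z}$ of Hodge structures of the appropriate weight. Therefore $J^2(\cat{U}^s(n,L)) = \coh{3}{}{\R}/\coh{3}{}{\Z} \cong \coh{1}{X}{\R}/\coh{1}{X}{\Z} = J(X) = Pic^0(X)$, where the last identification is the standard description of the Jacobian as $\coh{1}{X}{\R}/\coh{1}{X}{\Z}$ (equivalently $\coh{0}{X}{\Omega^1_X}^*/H_1(X,\Z)$), and this is an isomorphism of complex tori because the Hodge filtrations match.

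Putting the two halves together: $J^2(\cat{M}_{lc}(X,L)) \cong J^2(\cat{M}'_{lc}(X,L)) \cong J^2(\cat{U}^s(n,L)) \cong J(X)$, and each isomorphism respects the complex structure, so the composite is an isomorphism of complex tori (indeed of polarized abelian varieties, once the polarization on $\coh{3}{\cat{U}^s(n,L)}{}$ coming from the Chern-class identification is matched with the principal polarization of $J(X)$). I expect the main obstacle to be the careful bookkeeping of weights and Tate twists: one must check that the isomorphism $\coh{3}{\cat{U}^s(n,L)}{\Z} \cong \coh{1}{X}{\Z}(-1)$ is compatible with Hodge filtrations so that the quotient complex tori are genuinely isomorphic (not merely isogenous or isomorphic as real tori), and that the excision step over the non-compact variety $\cat{M}_{lc}(X,L)$ preserves these structures; the rest is an assembly of known cohomology computations. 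A clean way to organize this is to cite \cite{AB}, \cite{K} (and Newstead's results on $\coh{3}$ of bundle moduli) for the statement $\coh{3}{\cat{U}^s(n,L)}{\Z} \cong \coh{1}{X}{\Z}$ as Hodge structures, invoke \eqref{eq:56} and Lemma \ref{lem:2.5} for the reductions, and then read off \eqref{eq:57}.
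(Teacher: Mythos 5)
Your proposal follows essentially the same route as the paper's own proof: reduce $\coh{3}{\cat{M}_{lc}(X,L)}{\Z}$ to $\coh{3}{\cat{M}'_{lc}(X,L)}{\Z}$ using the codimension bound of Lemma \ref{lem:2.5} together with a weight/purity comparison against a smooth projective model, transfer to $\coh{3}{\cat{U}^s(n,L)}{\Z}$ via the affine-bundle isomorphism \eqref{eq:56}, and conclude by the classical identification $J^2(\cat{U}(n,L))\cong J(X)$, which the paper simply cites from \cite{NR1} while you sketch its K\"unneth/$c_2$ proof. The one step you leave compressed---surjectivity of $\iota^*:\coh{3}{\cat{M}_{lc}(X,L)}{\Z}\to\coh{3}{\cat{M}'_{lc}(X,L)}{\Z}$, i.e.\ purity of the mixed Hodge structure---is precisely what the paper carries out in detail via a smooth compactification, Deligne's weight theory, and a torsion-freeness argument to pass from $\Q$- to $\Z$-coefficients.
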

\begin{proof} First we show that the mixed Hodge structure on $\cat{M}_{lc}(X,L)$ is in fact a pure 
Hodge structure.
Let $Z := \cat{M}_{lc}(X,L) \setminus \cat{M}'_{lc}(X,L)$ as in Lemma \ref{lem:2.5}. Then, we have a long 
exact sequence of relative cohomology groups,
$$ \RCOH[3]{Z}{\cat{M}_{lc}(X,L)}{\Z} \to \coh{3}{ \cat{M}_{lc}(X,L)}{ \Z} \xrightarrow{\iota^*} \coh{3}{ \cat{M}'_{lc}(X,L)}{ \Z} \xrightarrow{\partial}
\RCOH[4]{Z}{\cat{M}_{lc}(X,L)}{\Z}$$
where $\iota^*$ is induced by the inclusion map
$\iota : \cat{M}'_{lc}(X,L) \hookrightarrow \cat{M}_{lc}(X,L)$ and $\partial$ is the boundary operator.
We show that $\iota^*$ is an isomorphism.
From Alexander duality \cite[Theorem 4.7, p.381]{I}, we have an isomorphism 
$$\RCOH[i]{Z}{\cat{M}_{lc}(X,L)}{\Z} \longrightarrow \RCOH[BM]{2N-i}{Z}{\Z},$$
where $N = 2(n^2-1)(g-1)$ is the complex dimension of the moduli space $\cat{M}_{lc}(X,L)$ and 
$\mbox{H}^{BM}_{*}$ is the Borel-Moore homology.
In view of Lemma \ref{lem:2.5}, we have 
$$\mathrm{codim}(Z, \cat{M}_{lc}(X,L)) \geq 2,$$
therefore the real dimension of $Z$ is at most $2N-4$.
Thus, 
$$\RCOH[BM]{2N-i}{Z}{\Z} = 0, ~~\mbox{for}~i = 0,1,2,3,$$
and hence 
$$\RCOH[3]{Z}{\cat{M}_{lc}(X,L)}{\Z} = 0.$$
This implies that $\iota^*$ is an injective morphism.
Let $\Gamma$  be a smooth compactification of $\cat{M}_{lc}(X,L)$, and 
$$Z' = \Gamma \setminus \cat{M}'_{lc}(X,L).$$
Then, from \cite[Corollaire 3.2.17]{D1}, we have a
surjective morphism 
$$\coh{3}{ \Gamma}{ \Q} \longrightarrow \coh{3}{ \cat{M}_{lc}(X,L)}{ \Q} $$
of mixed Hodge structures, and since 
$$\mathrm{codim}(Z', \Gamma) \geq 3$$
from \cite[p.269, Lemma 11.13]{Vo1}, we 
have an isomorphism 
$$\coh{3}{ \Gamma}{ \Z} \longrightarrow  \coh{3}{ \cat{M}'_{lc}(X,L)}{ \Z}$$ 
of Hodge structures.
Then we 
 have a 
commutative diagram 
\begin{equation*}
\xymatrix{
\coh{3}{ \Gamma}{ \Q}  \ar[d] \ar[rd] \\
\coh{3}{ \cat{M}_{lc}(X,L)}{ \Q}  \ar[r]^{\iota^*_{\Q}} & \coh{3}{ \cat{M}'_{lc}(X,L)}{ \Q} \\
}
\end{equation*}
and from the above facts, the vertical and diagonal arrows are the surjective morphisms 
of mixed Hodge structures  induced from their respective 
inclusion maps.  Now, because of the commutativity of the diagram, $\iota^*_{\Q}$ is a surjective morphism of mixed Hodge structures.
Since $\coh{3}{ \cat{M}'_{lc}(X,L)}{ \Z}$ (being isomorphic to $\coh{3}{ \cat{U}(n,L)}{ \Z}$) is torsion 
free $\Z$-module of finite rank \cite[Theorem 3]{NR1} and  $\iota^*$ is an injective morphism, $\coh{3}{ \cat{M}_{lc}(X,L)}{ \Z}$
is torsion free. In order to show that $\iota^*$ is 
surjective, we need to show that $\RCOH[4]{Z}{\cat{M}_{lc}(X,L)}{\Z}$ is torsion-free. In fact, if $Z$ has
codimension $\geq 3$, this group is zero; if $Z$ has 
codimension $2$, it is isomorphic to $\RCOH[BM]{2N-4}{Z}{\Z}$, which is the top homology group and necessarily
torsion-free.
Thus, $\iota^*$ is a surjective morphism, and hence 
the mixed Hodge structure on
$\coh{3}{ \cat{M}_{lc}(X,L)}{ \Z}$ is a pure Hodge 
structure of weight $3$.
Therefore, the second intermediate Jacobian 
$J^{2}(\cat{M}_{lc}(X,L))$ is isomorphic to 
$J^{2}(\cat{M}'_{lc}(X,L))$, and the latter is isomorphic 
to $J^{2}(\cat{U}(n,L))$.
Thus, from \cite[Theorem 3]{NR1}, $J^{2}(\cat{M}_{lc}(X,L))$ is isomorphic to $J(X)$. This completes the proof.
\end{proof}
 
Let $\Theta$ be the theta divisor on the Jacobian 
$J(X)$. The pair $(J(X), \Theta)$ is called a principally 
polarised Jacobian. Then, the classical Torelli  theorem says that the pair $(J(X), \Theta)$ determines the
 compact Riemann surface $X$ up to isomorphism.
 
In view of  Proposition \ref{prop:5}, the moduli space  $\cat{M}_{lc}(X,L)$ determines the Jacobian $J(X)$ of 
the compact Riemann surface $X$. But this does not 
qualify for the determination of $X$, because two 
non-isomorphic compact Riemann surfaces can have 
isomorphic Jacobian.

Nevertheless, from \cite[p.125, Corollary 1.2]{NN1},
there are, up to isomorphism, only finitely many compact 
Riemann surfaces having a given abelian variety as 
the Jacobian. Thus, 
 there are, up to isomorphism, only finitely many 
 compact Riemann surface $Y$ such that 
 $\cat{M}_{lc}(Y,L)$ is isomorphic to $\cat{M}_{lc}(X,L)$.

 \begin{remark}
 \label{rmk:1}
Let $\widetilde{\Theta}$ be the canonical  polarisation on  the second intermediate Jacobian  $J^{2}(\cat{U}(n,L))$. Then, from \cite[Theorem 3]{NR1}, we  have 
$$(J^{2}(\cat{U}(n,L)), \widetilde{\Theta}) \cong 
(J(X), \Theta).$$
 In \cite[Section 4]{BM7},   Biswas and Mu$\tilde{\mbox{n}}$oz constructed the principal polarisation $\widehat{\Theta}$ on the second intermediate Jacobian of the moduli space $\cat{M}^{x_0}_{lc}(X)$ of logarithmic connections singular exactly over one point $x_0$ of the compact Riemann surface $X$
with fixed determinant such that the principally polarised abelian variety 
$(J^2(\cat{M}^{x_0}_{lc}(X)), \widehat{\Theta})$ is isomorphic to the principally polarised abelian 
variety $(J^2(\cat{U}(n,L)), \widetilde{\Theta})$.
Imitating the similar technique as in \cite[Section 4]{BM7}, a principal polarisation can be constructed on $\cat{M}_{lc}(X,L)$. 
 \end{remark}
 
From Lemma \ref{lem:5.2},
 the moduli space $\cat{M}_{lc}(X,L)$ does not depend on the choice of $S$.
Thus, we have 
\begin{theorem}
\label{thm:5.5}
Let $(X,S)$ and $(Y, T)$ be two $m$-pointed compact Riemann surfaces of genus 
$g \geq 3$. Let $\cat{M}_{lc}(X,L)$ and $\cat{M}_{lc}(Y,L')$ be the corresponding moduli spaces of 
logarithmic connections. Then, $\cat{M}_{lc}(X,L)$ is isomorphic to $\cat{M}_{lc}(Y,L')$  if and only if $X$ is 
isomorphic to $Y$.
\end{theorem}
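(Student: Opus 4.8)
The plan is to reduce the statement to the classical Torelli theorem for the Jacobian together with the finiteness of Riemann surfaces sharing a given Jacobian, in the spirit of the argument already sketched after Proposition \ref{prop:5}, but sharpened by constructing a canonical principal polarisation on the relevant intermediate Jacobian. The ``only if'' direction is the substantial one; the ``if'' direction is immediate, since an isomorphism $X \cong Y$ carrying $S$ to $T$ induces an isomorphism of the moduli data (and by Lemma \ref{lem:5.2} the moduli space does not depend on the choice of the finite set, so we may even arrange the point sets to match).

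For the ``only if'' direction, first I would invoke Proposition \ref{prop:5}: an isomorphism $\cat{M}_{lc}(X,L) \cong \cat{M}_{lc}(Y,L')$ induces an isomorphism of the second intermediate Jacobians $J^2(\cat{M}_{lc}(X,L)) \cong J^2(\cat{M}_{lc}(Y,L'))$, hence $J(X) \cong J(Y)$ as abelian varieties. To upgrade this to an isomorphism of \emph{principally polarised} abelian varieties, I would carry out the construction promised in Remark \ref{rmk:1}: following the method of \cite[Section 4]{BM7}, one produces a canonical principal polarisation $\widehat{\Theta}$ on $J^2(\cat{M}_{lc}(X,L))$, intrinsic to the variety $\cat{M}_{lc}(X,L)$, such that $(J^2(\cat{M}_{lc}(X,L)), \widehat{\Theta}) \cong (J(X), \Theta)$ via the chain of isomorphisms $J^2(\cat{M}_{lc}(X,L)) \cong J^2(\cat{M}'_{lc}(X,L)) \cong J^2(\cat{U}(n,L))$ from the proof of Proposition \ref{prop:5} together with \cite[Theorem 3]{NR1}. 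Since the isomorphism of moduli spaces is an isomorphism of varieties, it respects this intrinsic polarisation, so we obtain $(J(X), \Theta) \cong (J(Y), \Theta')$ as principally polarised abelian varieties. By the classical Torelli theorem this yields $X \cong Y$, completing the proof.

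An alternative, and perhaps cleaner, route avoiding the polarisation construction is to appeal directly to \cite[p.125, Corollary 1.2]{NN1}: there are only finitely many compact Riemann surfaces with a given Jacobian, so from $J(X) \cong J(Y)$ alone one gets that $Y$ lies in a finite list determined by $X$ — but this only gives finiteness, not the full Torelli statement, so I would present the polarised version as the main argument and keep the finiteness remark as supplementary. I would also note explicitly that, by Lemma \ref{lem:5.2}, each side of the isomorphism is independent of the chosen finite subset, which is what permits dropping $S$ and $T$ from the notation in the statement.

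The main obstacle I anticipate is verifying that the principal polarisation $\widehat{\Theta}$ on $J^2(\cat{M}_{lc}(X,L))$ is genuinely \emph{canonical}, i.e. functorial for isomorphisms of the moduli space, so that it is transported correctly under the given isomorphism $\cat{M}_{lc}(X,L) \cong \cat{M}_{lc}(Y,L')$. Concretely, this requires checking that the construction in \cite[Section 4]{BM7} — which uses the Hodge-theoretic identification $\coh{3}{\cat{M}_{lc}(X,L)}{\Z} \cong \coh{3}{\cat{U}(n,L)}{\Z}$ established in the proof of Proposition \ref{prop:5}, and the known polarisation on $J^2(\cat{U}(n,L))$ from \cite{NR1} — depends only on the abstract variety structure and not on any auxiliary choices (such as $L$, the residues $\alpha_j$, or the point set). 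Once that functoriality is in place, the Torelli conclusion is formal.
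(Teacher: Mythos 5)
Your proposal is correct and follows essentially the same route as the paper: the paper likewise derives the result from Proposition \ref{prop:5} (identifying $J^2(\cat{M}_{lc}(X,L))$ with $J(X)$), the principal polarisation transported from $\cat{U}(n,L)$ via the construction of Biswas--Mu\~noz indicated in Remark \ref{rmk:1}, the classical Torelli theorem, and Lemma \ref{lem:5.2} for independence of the marked points, treating the finiteness statement from \cite{NN1} only as a supplementary observation. Your explicit flagging of the functoriality of the polarisation is a fair point, but it matches the level of detail the paper itself supplies.
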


Next, we show the Torelli type theorem for the moduli space
$\cat{M}_{lc}(X)$.
Let 
\begin{equation}
\label{eq:5.6}
G : \cat{M}_{lc}(X) \longrightarrow Pic^d(X)
\end{equation}
be the  map sending 
$(E,D) \mapsto \bigwedge^nE$. Note that the morphism
$G$ is surjective.    Since $d$, $n$ and 
$\Phi_j = \alpha_j \id{E(x_j)}$ for $j = 1, \dots, m$ satisfy \eqref{eq:8},  
from \cite[Theorem 1.3 (2)]{B} $E$ admits a logarithmic 
connection with residues $\alpha_j \id{E(x_j)}$ at $x_j \in S$.

Now, we have a natural generalisation of \cite[p.431, Lemma 5.1]{B04} and \cite[p.313, Proposition 5.1]{BM7}.
\begin{proposition}
\label{prop:5.7}
Let $A$ be a complex abelian variety, and 
\begin{equation}
\label{eq:5.8}
f : \cat{M}_{lc}(X) \longrightarrow A
\end{equation}
a regular morphism. Then there exists a unique
regular morphism
$$f_0 : Pic^d(X) \longrightarrow A$$
such that 
\begin{equation}
\label{eq:5.9}
f_0 \circ G = f,
\end{equation}
where $G$ is defined in \eqref{eq:5.6}.
\end{proposition}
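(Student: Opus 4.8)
The statement is a standard "factorization through the Albanese / abelian variety" type result: any morphism from $\cat{M}_{lc}(X)$ to an abelian variety $A$ should factor through the determinant map $G : \cat{M}_{lc}(X) \to Pic^d(X)$. The key structural fact I would exploit is that the fibers of $G$ are \emph{rationally connected} (or at least unirational, or covered by rational curves), since for a fixed line bundle $M \in Pic^d(X)$ the fiber $G^{-1}(M)$ is (closely related to) the moduli space $\cat{M}_{lc}(n,M)$ of logarithmic connections with fixed determinant, and by the forgetful-map description (Lemma~\ref{lem:1}: $\cat{M}_{lc}'(n,M)$ is an $\Omega^1_{\cat{U}^s(n,M)}$-torsor over $\cat{U}^s(n,M)$) together with Corollary~\ref{cor:6.7}, which asserts that $\cat{M}_{lc}(n,L)$ is rationally connected, these fibers are rationally connected. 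A regular morphism from a rationally connected variety to an abelian variety is constant, because abelian varieties contain no rational curves. Hence $f$ is constant on each fiber of $G$, and since $G$ is surjective with connected (irreducible) fibers, $f$ factors set-theoretically through $G$.

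**The steps, in order.** First I would recall that $G$ is surjective (stated just above the Proposition) and that it is a proper, or at least a well-behaved, morphism with irreducible fibers; the fiber over $M$ contains the open dense subset $\cat{M}_{lc}'(n,M)$, which is rationally connected by Corollary~\ref{cor:6.7} (applied with $L = M$). Second, I would invoke the fundamental fact that any morphism from a (smooth) rationally connected variety — or indeed from any variety covered by rational curves — to an abelian variety is constant; this gives that $f$ is constant on $\cat{M}_{lc}'(n,M)$, hence on its closure $G^{-1}(M)$ by continuity, for every $M$. Third, define $f_0 : Pic^d(X) \to A$ as the induced set-theoretic map $M \mapsto f(G^{-1}(M))$; since $G$ is a surjective morphism with a section locally in the flat (or étale) topology — or, more simply, using that $G$ is flat with reduced irreducible fibers and $Pic^d(X)$ is normal — the map $f_0$ is a morphism of varieties by a standard descent / rigidity argument (a map out of a normal variety that becomes a morphism after a surjective base change is a morphism). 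Finally, uniqueness of $f_0$ is immediate from surjectivity of $G$: if $f_0 \circ G = f_0' \circ G$ then $f_0 = f_0'$ on the image of $G$, which is all of $Pic^d(X)$.

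**The main obstacle.** The delicate point is the passage from "$f$ is constant on fibers of $G$" to "$f_0$ is a \emph{regular} morphism," not merely a map of sets; one must either produce local sections of $G$ (e.g. using that $\cat{M}_{lc}(n,d) \to Pic^d(X)$ admits local sections because the relevant moduli problem has local universal families, or because $\cat{M}_{lc}'(n,M) \to \cat{U}^s(n,M)$ is a torsor and $\cat{U}^s(n,M)$ fibers nicely), or invoke a rigidity lemma in the form: if $\phi : V \to W$ is surjective and proper with $W$ normal, and $g : V \to A$ is a morphism contracting all fibers of $\phi$, then $g$ descends to $W$. The other place requiring care is confirming that the \emph{whole} fiber $G^{-1}(M)$ — including its possibly singular points corresponding to non-stable underlying bundles — is contracted by $f$; this follows by taking closures, since $\cat{M}_{lc}'(n,M)$ is dense in $G^{-1}(M)$ and $f$ is continuous (in fact a morphism), so the image of the fiber, being the closure of a point, is that point. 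I would phrase the descent step via the rigidity lemma to keep the argument clean, citing the analogue used in \cite{B04} and \cite{BM7} to which this is a direct generalization.
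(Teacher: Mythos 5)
Your strategy---contract the fibres of $G$ using the fact that an abelian variety contains no rational curves, then descend---is viable, but the key step is misjustified as written. The fibre $G^{-1}(M)$ is \emph{not} (the closure of) $\cat{M}'_{lc}(n,M)$: the fixed-determinant moduli space of \eqref{eq:m2} fixes the pair $(\bigwedge^n E,\tilde D)\cong (M,D_M)$, i.e.\ the induced connection on the determinant as well, whereas $G$ only records the line bundle $\bigwedge^n E$. Consequently $\cat{M}_{lc}(n,M)$ sits inside $G^{-1}(M)$ in codimension $g$; in fact $G^{-1}(M)$ is an affine-space fibration over it with fibres modelled on $\coh{0}{X}{\Omega^1_X}$ (this is exactly the map $\eta$ of \eqref{eq:5.11}, introduced right after the Proposition). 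So ``contains the open dense subset $\cat{M}'_{lc}(n,M)$'' is false, and Corollary \ref{cor:6.7} does not apply directly to the fibre. The conclusion you want is nevertheless true and the repair is short: the fixed-determinant-connection moduli space is rationally connected (the argument of Proposition \ref{prop:6.5} and Corollary \ref{cor:6.7} works for any $(M,D_M)$), the fibres of $\eta$ are affine spaces, and Lemma \ref{lem:6.4} (Graber--Harris--Starr) then gives rational connectedness of $G^{-1}(M)$; after that your constancy-on-fibres and descent steps can proceed. The gap is a fixable misidentification of the fibre of $G$, not a failure of the method, but as submitted the central claim is unsupported.

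For comparison, the paper never invokes rational connectedness of the $G$-fibres: working on $\cat{M}'_{lc}(n,d)$, it first observes that the fibres of $p_0:\cat{M}'_{lc}(n,d)\to\cat{U}(n,d)$ are affine spaces, hence rational, so $f$ is constant on them and induces a map out of $\cat{U}(n,d)$; it then restricts this map to the fibres $\cat{U}(n,L)=F^{-1}(L)$ of the determinant map $F:\cat{U}(n,d)\to Pic^d(X)$ and uses King--Schofield \cite{KS} (rationality of $\cat{U}(n,L)$) to conclude constancy there, so that $f$ factors through $G=F\circ p_0$ on the stable locus. Your one-step contraction along $G$ is more uniform and, once repaired as above, fits naturally with Section \ref{Rat}; you are also more explicit than the paper about the two points its proof leaves implicit, namely the passage from $\cat{M}'_{lc}$ to all of $\cat{M}_{lc}(X)$ (closure/irreducibility of the fibres) and the fact that $f_0$ is a \emph{regular} morphism rather than just a set-theoretic factorization, which indeed requires a rigidity or descent argument of the kind you sketch.
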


\begin{proof}
Consider $\cat{M}'_{lc}(X) := \cat{M}'_{lc}(n,d) \subset \cat{M}_{lc}(X)$ as in  \eqref{eq:m1}.
Let $$p_0 : \cat{M}'_{lc}(X) \longrightarrow \cat{U}(n,d) $$
be the morphism defined in \eqref{eq:21.5}. For 
$E \in \cat{U}(n,d)$,
it has been observed that $p_0^{-1}(E)$ is an affine 
space modelled over the vector space $\coh{0}{X}{\Omega^1_X \otimes \ENd{E}}$, and hence $p_0^{-1}(E)$
is a rational variety. Restricting $f$ to $p_0^{-1}(E)$, we get a map
$$f \vert_{p_0^{-1}(E)} :   p_0^{-1}(E) \longrightarrow A,$$
which is a constant map, because any regular morphism from 
a rational variety to an abelian variety is constant.

Now, consider the determinant map
$$F : \cat{U}(n,d) \longrightarrow Pic^{d}(X)$$
defined by sending $E$ to $\bigwedge^n E$. Then $F$ 
is a surjective map. For any $L \in Pic^d(X)$,
$F^{-1}(L)$ is nothing but the moduli space $\cat{U}(n,L)$. 
Thus, we get a regular morphism
$$\psi_0 \vert_{F^{-1}(L)} : \cat{U}(n,L) = F^{-1}(L) \longrightarrow A.$$
on each of the fibres of $F$.
From \cite[Theorem 1.2]{KS}, $\cat{U}(n,L)$ is a rational variety, and hence the regular morphism 
$\psi_0 \vert_{F^{-1}(L)}$ is constant.
This completes the proof.
\end{proof}

Let $\cat{M}_{lc}^X$ be the moduli space defined in 
\eqref{eq:k}.
Then, we have a morphism
\begin{equation}
\label{eq:5.10}
\delta: \cat{M}_{lc}^X \longrightarrow Pic^d(X)
\end{equation}
defined by $(L,D) \mapsto L$. Then 
$\delta^{-1}(L)$ is an affine space modelled over 
$\coh{0}{X}{\Omega^1_X}$.
Then 
$$G = \delta \circ \det,$$
where $G$ is defined in \eqref{eq:5.6}, and 
$\det : \cat{M}_{lc}(X) \to \cat{M}_{lc}^X$  defined in \eqref{mor}.
Thus, we have a morphism
\begin{equation}
\label{eq:5.11}
\eta : G^{-1}(L) \longrightarrow \cat{M}_{lc}(X,L)
\end{equation}
which is a fibration and each fibre is an affine space 
modelled over $\coh{0}{X}{\Omega^1_X}$.
Since the fibre of $\eta$ is contractible, we have 
an isomorphism
$$\eta^* : \coh{i}{ \cat{M}_{lc}(X,L)}{ \Z} \longrightarrow
  \coh{i}{G^{-1}(L)}{ \Z} $$
  of cohomology groups for all $i \geq 0$.
  Therefore, we have
  $$J^2(\cat{M}_{lc}(X,L)) \cong J^2(G^{-1}(L)).$$
  As mentioned in  Remark \ref{rmk:1}, similar steps
  give a principal polarisation $\widehat{\widehat{\Theta}}$ on $J^2(G^{-1}(L))$
  such that 
   $$(J^2(\cat{M}_{lc}(X,L)), \widehat{\Theta}) \cong (J^2(G^{-1}(L)), \widehat{\widehat{\Theta}}).$$
   
   Thus, in view of Lemma \ref{lem:5.1} and  using the Theorem \ref{thm:5.5}, we get
   \begin{theorem}
   \label{thm:5.9}
Let $(X,S)$ and $(Y, T)$ be two $m$-pointed compact Riemann surfaces of genus 
$g \geq 3$. Let $\cat{M}_{lc}(X)$ and $\cat{M}_{lc}(Y)$ be the corresponding moduli spaces of 
logarithmic connections. Then, $\cat{M}_{lc}(X)$ is isomorphic to $\cat{M}_{lc}(Y)$  if and only if $X$ is 
isomorphic to $Y$.   
   \end{theorem}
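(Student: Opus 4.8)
The plan is to reduce the Torelli problem for $\cat{M}_{lc}(X)$ to the already-established Torelli theorem for $\cat{M}_{lc}(X,L)$ (Theorem \ref{thm:5.5}). First I would observe that the ``only if'' direction is trivial, so assume $\cat{M}_{lc}(X)$ is isomorphic to $\cat{M}_{lc}(Y)$ via some isomorphism $\varphi$. The key structural input is the morphism $G : \cat{M}_{lc}(X) \to Pic^d(X)$ of \eqref{eq:5.6} together with its universal property, Proposition \ref{prop:5.7}: every regular morphism from $\cat{M}_{lc}(X)$ to an abelian variety factors uniquely through $G$. Applying this to the composite $\cat{M}_{lc}(X) \xrightarrow{\varphi} \cat{M}_{lc}(Y) \xrightarrow{G_Y} Pic^{d'}(Y)$ (note $Pic^{d'}(Y)$ is a torsor over the abelian variety $Pic^0(Y)$, so after a translation one may regard the target as an abelian variety and invoke the proposition), and symmetrically to $\varphi^{-1}$, I would produce a morphism $Pic^d(X) \to Pic^{d'}(Y)$ which is forced to be an isomorphism of varieties compatible with $G_X$ and $G_Y$ in the sense that it carries the fibration structure of one onto the other.

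Next I would identify the fibres. By the discussion preceding the statement, for $L \in Pic^d(X)$ the fibre $G^{-1}(L)$ is related to $\cat{M}_{lc}(X,L)$ through the affine fibration $\eta : G^{-1}(L) \to \cat{M}_{lc}(X,L)$ of \eqref{eq:5.11} whose fibres are affine spaces modelled on $\coh{0}{X}{\Omega^1_X}$; in particular $\cat{M}_{lc}(X,L)$ is recovered from $G^{-1}(L)$ up to an affine-bundle quotient, and the same holds on the $Y$ side. Since the isomorphism $\varphi$ together with the induced isomorphism $Pic^d(X) \cong Pic^{d'}(Y)$ carries $G^{-1}(L)$ isomorphically onto $G_Y^{-1}(L')$ for the corresponding line bundle $L'$, and since this isomorphism must respect the relative structure (the affine directions being intrinsic, e.g. as the fibres of the ``vertical'' rational-connectedness or via the contractibility used for $\eta^*$), I would deduce that $\cat{M}_{lc}(X,L)$ is isomorphic to $\cat{M}_{lc}(Y,L')$ for suitable $L, L'$. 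Here I would also use Lemma \ref{lem:5.1} to suppress the dependence on $S$ and $T$.

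Finally, having produced an isomorphism $\cat{M}_{lc}(X,L) \cong \cat{M}_{lc}(Y,L')$, I would invoke Theorem \ref{thm:5.5} directly to conclude $X \cong Y$. So the skeleton is: (1) trivial direction; (2) use Proposition \ref{prop:5.7} to show any isomorphism $\cat{M}_{lc}(X) \cong \cat{M}_{lc}(Y)$ descends to an isomorphism of the $Pic$ bases intertwining $G_X$ and $G_Y$; (3) pass to fibres and use the affine fibration $\eta$ to get an isomorphism of fixed-determinant moduli spaces $\cat{M}_{lc}(X,L) \cong \cat{M}_{lc}(Y,L')$; (4) apply Theorem \ref{thm:5.5}.

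The main obstacle I anticipate is step (2)–(3): showing that the abstract isomorphism $\varphi$ actually \emph{respects} the fibration $G$ rather than merely inducing some isomorphism of the Albanese-type targets, and then that it carries the affine fibres of $\eta$ to affine fibres. The universal property in Proposition \ref{prop:5.7} gives the factorization at the level of maps to abelian varieties, but one must check that the resulting map on $Pic^d$ is an isomorphism (applying the proposition to both $\varphi$ and $\varphi^{-1}$ and using uniqueness to see the two induced maps are mutually inverse) and that the identification of a fibre $G^{-1}(L)$ with its image is canonical enough that the affine-bundle structure — hence $\cat{M}_{lc}(X,L)$ — is transported. One clean way around the degree mismatch (if $d \neq d'$ a priori) is to note that an isomorphism of the moduli spaces must preserve dimension and the numerical invariants appearing in their construction, pinning down that the ranks agree and that the $Pic$ components have the same dimension $g$, after which the Torelli statement for the fixed-determinant case finishes the argument.
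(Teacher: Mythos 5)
Your steps (1)--(2) match the paper: the ``if'' direction is handled by Lemma \ref{lem:5.1}, and Proposition \ref{prop:5.7}, applied to $G_Y\circ\varphi$ and $G_X\circ\varphi^{-1}$ (after translating the $Pic$-torsors to abelian varieties) together with uniqueness of the factorization, shows that $\cat{M}_{lc}(X)$ determines the pair $(Pic^d(X),G)$ up to an automorphism of the base, so $\varphi$ carries each fibre $G^{-1}(L)$ isomorphically onto some fibre $G_Y^{-1}(L')$. The genuine gap is in your step (3): you claim that this isomorphism of fibres respects the affine fibration $\eta:G^{-1}(L)\to\cat{M}_{lc}(X,L)$ of \eqref{eq:5.11} and hence descends to an isomorphism $\cat{M}_{lc}(X,L)\cong\cat{M}_{lc}(Y,L')$, to which Theorem \ref{thm:5.5} could be applied. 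But $\eta$ is not known to be intrinsic to the variety $G^{-1}(L)$: its fibres are the orbits of the translation action of $\coh{0}{X}{\Omega^1_X}$ on the determinant connection, and an abstract isomorphism of varieties has no reason to preserve this foliation. Neither of your suggested justifications works: contractibility of the fibres is a property of the map $\eta$, not a characterization of its fibres inside $G^{-1}(L)$, and rational connectedness does not single out these particular affine subvarieties either. Without such an argument you never obtain the isomorphism of fixed-determinant moduli spaces that Theorem \ref{thm:5.5} requires.

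The paper avoids this entirely: it does not descend $\varphi$ along $\eta$, but only transports an isomorphism invariant of the fibre. Since the fibres of $\eta$ are contractible, $\eta^*$ identifies the cohomology, and (as in Remark \ref{rmk:1}) the polarized second intermediate Jacobians, so $\bigl(J^2(G^{-1}(L)),\widehat{\widehat{\Theta}}\bigr)\cong\bigl(J^2(\cat{M}_{lc}(X,L)),\widehat{\Theta}\bigr)\cong(J(X),\Theta)$. An isomorphism $G^{-1}(L)\cong G_Y^{-1}(L')$ then gives $(J(X),\Theta)\cong(J(Y),\Theta')$, and the classical Torelli theorem --- the same mechanism that underlies Theorem \ref{thm:5.5}, rather than that theorem applied verbatim --- yields $X\cong Y$. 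Your argument can be repaired by replacing steps (3)--(4) with exactly this: attach the principally polarized abelian variety directly to the fibre $G^{-1}(L)$ via $\eta^*$, instead of trying to recover $\cat{M}_{lc}(X,L)$ itself. Your closing remarks about the degree mismatch and the torsor structure of $Pic^{d'}(Y)$ are fine and are also implicit in the paper.
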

  
%

\section{Rational connectedness of the moduli spaces}
\label{Rat}
 In \cite{AS1},
we have shown that the moduli space of rank one logarithmic connections with fixed residues is not 
rational. In this section, we show that the moduli space 
$\cat{M}_{lc}(n,d)$ is not rational. For the theory of 
rational varieties, we refer to \cite{Ko}. 

Recall that  
 a smooth complex variety $V$  
is said to be rationally connected if any two general points on $V$
can be connected by a rational curve in $V$. The following lemma is an easy consequence of  the definition.

\begin{lemma}
\label{lem:6.1}
Let $f : \cat{Y} \to \cat{X}$ be a dominant rational map of complex 
algebraic varieties with $\cat{Y}$ rationally connected. Then,  $\cat{X}$ is rationally connected.
\end{lemma}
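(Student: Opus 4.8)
The plan is to reduce to the defining property of rational connectedness by pushing rational curves forward along $f$. Since $f : \cat{Y} \to \cat{X}$ is dominant, its image contains a dense open subset $\cat{X}_0 \subseteq \cat{X}$; since $f$ is a rational map, there is a dense open subset $\cat{Y}_0 \subseteq \cat{Y}$ on which $f$ is defined as a morphism, and we may shrink so that $f(\cat{Y}_0) \subseteq \cat{X}_0$ with $f|_{\cat{Y}_0}$ dominant. Let $x_1, x_2 \in \cat{X}$ be two general points; after shrinking we may assume $x_1, x_2 \in \cat{X}_0$, and because $f|_{\cat{Y}_0}$ is dominant we may pick $y_1, y_2 \in \cat{Y}_0$ with $f(y_i) = x_i$ for $i = 1, 2$ (here one uses that a dominant morphism is surjective onto a dense open set, so its fibres over general points are non-empty).

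First I would invoke rational connectedness of $\cat{Y}$: the two general points $y_1, y_2$ can be joined by a rational curve, i.e.\ there is a morphism $\phi : \p^1 \to \cat{Y}$ with $y_1, y_2 \in \phi(\p^1)$. If the image $\phi(\p^1)$ meets the locus where $f$ is undefined, I would replace $\phi$ by a general deformation of the curve through $y_1$ and $y_2$ (or simply note that one may choose the connecting rational curve to lie in $\cat{Y}_0$, since $\cat{Y}_0$ is a dense open subset and very general rational curves through two general points avoid any fixed proper closed subset). Then $f \circ \phi : \p^1 \to \cat{X}$ is a morphism, and its image is a rational curve in $\cat{X}$ containing $x_1 = f(y_1)$ and $x_2 = f(y_2)$. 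Thus any two general points of $\cat{X}$ lie on a rational curve, which is exactly the assertion that $\cat{X}$ is rationally connected.

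The main obstacle, and the only point requiring care, is the interplay between "general points" and the indeterminacy locus of the rational map $f$ together with the non-properness of the varieties in question: one must ensure that the connecting rational curve in $\cat{Y}$ can be chosen to avoid the locus where $f$ is undefined, and that the chosen preimages $y_i$ are themselves general enough in $\cat{Y}$ for rational connectedness to apply. Both are handled by the standard fact that rationally connected varieties admit, through any two general points, a family of rational curves sweeping out a dense subset, so a general member avoids any prescribed proper closed subscheme; alternatively one works throughout on the open subsets $\cat{Y}_0$ and $\cat{X}_0$, on which $f$ is an honest dominant morphism, and uses that rational connectedness is insensitive to removing closed subsets of codimension $\geq 1$ in the sense that it only concerns general points. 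No deeper input is needed; the statement is genuinely a formal consequence of the definition.
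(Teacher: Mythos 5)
Your argument is correct, and it is exactly the standard pushforward-of-rational-curves argument that the paper leaves implicit (the paper states the lemma as ``an easy consequence of the definition'' and gives no proof). The only point that deserves slightly more careful phrasing is the claim that the chosen preimages $y_1, y_2$ are themselves general: the clean way to say this is to take the dense (open) subset of pairs in $\cat{Y} \times \cat{Y}$ joined by a rational curve contained in the locus $\cat{Y}_0$ where $f$ is defined, and note that its image under $f \times f$ is constructible and dense in $\cat{X} \times \cat{X}$, hence contains a dense open set of pairs of points of $\cat{X}$ joined by the image curve $f \circ \phi(\p^1)$ -- which is what you indicate in your final paragraph, so no genuine gap remains.
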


\begin{theorem}[\cite{KS}, Theorem 1.1] 
The moduli space 
$\cat{U}(n,d)$ is birational to $J(X) \times \A^{(n^2-1)(g-1)}$, where $J(X)$ is the Jacobian of $X$.
\end{theorem}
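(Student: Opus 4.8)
This statement is the King--Schofield rationality theorem, and I would prove it by a reduction on the rank, following the method of \cite{KS}. The starting point is that, since $(n,d)=1$, the moduli space $\cat{U}(n,d)$ is a smooth projective variety of dimension $n^2(g-1)+1$ which carries a universal bundle on $\cat{U}(n,d)\times X$; note also that tensoring by a fixed line bundle of suitable degree leaves the birational type of $\cat{U}(n,d)$ unchanged but shifts $d$ by a multiple of $n$, so one may normalise $d$ into the range $0<d<n$.

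The key step is to realise a dense open subset of $\cat{U}(n,d)$ as a Zariski-locally-trivial fibration, with rational fibres (affine spaces, respectively projective spaces), over a dense open subset of a moduli space $\cat{U}(n',d')$ of \emph{strictly smaller} rank $n'<n$, again with $(n',d')=1$: concretely, a generic stable bundle $E$ of rank $n$ is reconstructed from a Hecke (elementary) modification at a fixed point $x\in X$ of a bundle of smaller rank, and the parameter space of such modification data provides the fibration. Iterating --- the pairs $(n,d)$ descend through a Euclid-type algorithm, which terminates at rank $1$ precisely because $\gcd(n,d)=1$ --- one arrives at $\cat{U}(1,d_k)=Pic^{d_k}(X)$, which is birational to $J(X)$. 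Since an affine-space bundle over any base $B$ is birational to $B\times\A^{\bullet}$, and (given vanishing of the relevant Brauer class) a projective-space bundle over $B$ is birational to $B\times\p^{\bullet}$, composing all the stages yields $\cat{U}(n,d)\sim J(X)\times\A^{N}$ for some $N$, and a dimension count, using $\dim{\cat{U}(n,d)}=n^2(g-1)+1$ and $\dim{J(X)}=g$, forces $N=(n^2-1)(g-1)$.

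The hard part will be showing that these fibrations are genuinely \emph{Zariski}-locally trivial with rational fibres, rather than merely \'etale-locally trivial: a priori the Hecke construction produces a gerbe, i.e.\ a nontrivial Brauer class, over the smaller moduli space, and one must verify that this obstruction dies. This is exactly where the hypothesis $(n,d)=1$ is essential --- it is what makes the successive universal bundles exist and lets one trace the twisting classes through the reduction --- and this verification is the technical heart of the King--Schofield argument; I expect it to be the main obstacle here as well.
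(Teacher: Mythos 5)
The paper does not actually prove this statement: it is imported wholesale as Theorem 1.1 of \cite{KS}, so the only argument to measure yours against is King--Schofield's own. Your sketch gets the outer shell right (coprimality gives smoothness, projectivity and a universal bundle; twisting by a fixed line bundle normalises $d$; a dimension count at the end pins the affine factor to $(n^2-1)(g-1)$; and the delicate point is Zariski-local versus \'etale-local triviality, i.e.\ a Brauer-type obstruction). But the engine you propose for the descent is not a construction at all: a Hecke (elementary) modification at a point $x$ is an exact sequence $0 \to E' \to E \to T \to 0$ with $T$ a torsion sheaf supported at $x$, and such a modification changes the degree by the length of $T$ while leaving the rank unchanged. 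Consequently a rank-$n$ bundle can never be ``reconstructed from a Hecke modification at a fixed point of a bundle of smaller rank $n'<n$'', and the Euclid-type recursion you describe --- the rank strictly dropping at each stage until one reaches $\cat{U}(1,d_k)=Pic^{d_k}(X)$ --- has no geometric realisation by point modifications: Hecke moves act only on $d$, and twists by line bundles act by $d \mapsto d \pm kn$, so the rank is invariant under everything in your toolkit. This rank-reducing step is the heart of the theorem, and it is missing.

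In \cite{KS} the rank is lowered by a different device: a general stable bundle is presented birationally in terms of bundles of smaller rank (sub/quotient, i.e.\ extension-type data adapted to the arithmetic of $(n,d)$), not via modifications along torsion sheaves, and the induction is organised so that the obstruction to birational triviality of the successive rational fibrations --- exactly the issue you flag in your last paragraph --- can be traced through each stage and shown to be harmless, which is where coprimality (more generally $h=\gcd(n,d)$, with target $\cat{U}(h,0)$) really enters. So you have correctly located where the difficulty lies, but as written your proposal establishes only the standard preliminary facts and the final dimension count; without a valid rank-reducing step the induction never starts, and the birational equivalence with $J(X)\times\A^{(n^2-1)(g-1)}$ is not reached.
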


Note that $J(X)$ is not rationally connected,
because it does not contain any rational curve. 
Therefore, $\cat{U}(n,d)$ is not rationally connected.

\begin{proposition}
\label{thm:6.3}
The moduli space $\cat{M}_{lc}(n,d)$ is not rational.
\end{proposition}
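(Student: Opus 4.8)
The plan is to deduce the non-rationality of $\cat{M}_{lc}(n,d)$ from the structure theorems already established, in particular Theorem \ref{thm:2.5} (or rather the birational/fibration description underlying $p_0$) together with the non-rationality of $\cat{U}(n,d)$ recorded just above. First I would recall that the forgetful morphism $p_0 : \cat{M}'_{lc}(n,d) \to \cat{U}^s(n,d)$ of \eqref{eq:21.5} is a fibre bundle whose fibres are affine spaces (torsors modelled on $\coh{0}{X}{\Omega^1_X \otimes \ENd{E}}$), hence $p_0$ is a Zariski-locally trivial affine bundle over a dense open subset of $\cat{U}^s(n,d)$; in particular $\cat{M}'_{lc}(n,d)$ is birational to $\cat{U}^s(n,d) \times \A^{n^2(g-1)+1}$. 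Since $\cat{M}'_{lc}(n,d)$ is a dense open subset of $\cat{M}_{lc}(n,d)$, these two have the same function field, so it suffices to show $\cat{U}^s(n,d) \times \A^{N}$ is not rational, equivalently (by the stable birational invariance we only need one direction) that $\cat{U}^s(n,d)$ is not \emph{stably} rational.

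The key external input is the theorem of the preceding display (\cite{KS}, Theorem 1.1): $\cat{U}(n,d)$ is birational to $J(X) \times \A^{(n^2-1)(g-1)}$. Combining this with the affine-bundle description above, $\cat{M}_{lc}(n,d)$ is birational to $J(X) \times \A^{M}$ for a suitable $M$ (namely $M = (n^2-1)(g-1) + n^2(g-1)+1 = 2n^2(g-1)+1 - (g-1)$, but the exact value is irrelevant). Now if $\cat{M}_{lc}(n,d)$ were rational, then $J(X) \times \A^M$ would be rational, hence $J(X)$ would be stably rational. But $J(X)$ is an abelian variety of dimension $g \geq 3 > 0$, and a positive-dimensional abelian variety is not stably rational: any rational map from a rational (or stably rational) variety to an abelian variety is constant, since $\A^M$ and more generally any rational variety carries no nonconstant map to an abelian variety (equivalently, $H^0$ of the cotangent bundle would have to vanish in the right way, or one invokes that $J(X)$ contains no rational curves and stable rationality would force the Albanese to be trivial). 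This contradiction proves the proposition.

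In writing this up I would phrase it cleanly as: (i) $\cat{M}_{lc}(n,d)$ and $\cat{M}'_{lc}(n,d)$ are birational since the latter is a dense open subscheme; (ii) $p_0$ exhibits $\cat{M}'_{lc}(n,d)$ as birational to $\cat{U}(n,d) \times \A^{n^2(g-1)+1}$ because a Zariski-locally-trivial affine-space bundle is birationally a product; (iii) by \cite{KS} this is birational to $J(X) \times \A^{M}$; (iv) a variety birational to $J(X) \times \A^M$ cannot be rational because that would make the abelian variety $J(X)$ dominated by a rational variety, forcing $J(X)$ to be a point, contradicting $g \geq 3$. The main obstacle, and the one point deserving care, is step (ii)–(iii): one must make sure the affine bundle $p_0$ is Zariski-locally trivial (not merely étale- or analytic-locally trivial) so that the birational product decomposition is legitimate — this follows because affine-space torsors over a smooth base are Zariski-locally trivial (the relevant $H^1$ with coefficients in a vector bundle sheafifies to zero on a suitable open cover, or one simply uses that $\cat{M}'_{lc}(n,d)$ is an $\Omega^1$-torsor as in Lemma \ref{lem:1}-type statements and torsors under vector bundles are Zariski-locally trivial). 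Once that is in hand, the rest is the formal observation that rationality is incompatible with a birational factor being a positive-dimensional abelian variety.

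\begin{proof}
Since $\cat{M}'_{lc}(n,d)$ is a dense Zariski-open subset of $\cat{M}_{lc}(n,d)$, the two varieties are birational, so it suffices to prove that $\cat{M}'_{lc}(n,d)$ is not rational. Consider the morphism $p_0 : \cat{M}'_{lc}(n,d) \to \cat{U}^s(n,d)$ of \eqref{eq:21.5}. As noted after \eqref{eq:21.5}, for every $E \in \cat{U}^s(n,d)$ the fibre $p_0^{-1}(E)$ is an affine space modelled over $\coh{0}{X}{\Omega^1_X \otimes \ENd{E}}$, of dimension $n^2(g-1)+1$; moreover $\cat{M}'_{lc}(n,d)$ is a torsor over $\cat{U}^s(n,d)$ under the vector bundle ${p_1}_*(\ENd{\cat{E}} \otimes p_2^*\Omega^1_X)$. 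A torsor under a vector bundle over a variety is Zariski-locally trivial, so there is a dense open $V \subset \cat{U}^s(n,d)$ with $p_0^{-1}(V) \cong V \times \A^{n^2(g-1)+1}$. Hence $\cat{M}'_{lc}(n,d)$ is birational to $\cat{U}^s(n,d) \times \A^{n^2(g-1)+1}$, and therefore also to $\cat{U}(n,d) \times \A^{n^2(g-1)+1}$ since $\cat{U}^s(n,d)$ is a dense open subset of $\cat{U}(n,d)$.

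By the theorem of \cite{KS} quoted above, $\cat{U}(n,d)$ is birational to $J(X) \times \A^{(n^2-1)(g-1)}$. Combining the two birational equivalences, $\cat{M}_{lc}(n,d)$ is birational to $J(X) \times \A^{M}$ with $M = (n^2-1)(g-1) + n^2(g-1) + 1$. Suppose, for contradiction, that $\cat{M}_{lc}(n,d)$ were rational. Then $J(X) \times \A^{M}$ would be rational, so there would be a dominant rational map $\A^{M+\dim J(X)} \dashrightarrow J(X) \times \A^M \dashrightarrow J(X)$, i.e.\ a dominant rational map from a rational variety to the abelian variety $J(X)$. Any rational map from a rational variety to an abelian variety is constant, so such a dominant map can exist only if $\dim J(X) = 0$. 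But $\dim J(X) = g \geq 3 > 0$, a contradiction. Therefore $\cat{M}_{lc}(n,d)$ is not rational.
\end{proof}
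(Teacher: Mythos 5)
Your proof is correct, but it takes a genuinely different route from the paper. The paper argues through rational connectedness: it only uses that $p_0 : \cat{M}'_{lc}(n,d) \to \cat{U}(n,d)$ is dominant, invokes Lemma \ref{lem:6.1} to conclude that rational connectedness of $\cat{M}'_{lc}(n,d)$ would force $\cat{U}(n,d)$ to be rationally connected, and rules that out because $\cat{U}(n,d)$ is birational to $J(X)\times \A^{(n^2-1)(g-1)}$ and the abelian variety $J(X)$ contains no rational curves; non-rational-connectedness then gives non-rationality, and density of $\cat{M}'_{lc}(n,d)$ finishes. You instead upgrade the fibration to a birational product: using that $\cat{M}'_{lc}(n,d)$ is a torsor under a vector bundle over $\cat{U}^s(n,d)$ (hence Zariski-locally trivial, since $H^1$ of a quasi-coherent sheaf vanishes on affine opens), you get $\cat{M}_{lc}(n,d)$ birational to $J(X)\times\A^M$, and then conclude via the fact (also used by the paper in Proposition \ref{prop:5.7}) that rational maps from rational varieties to abelian varieties are constant. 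The trade-off: the paper's argument is lighter, needing only dominance of $p_0$ and no local triviality, and it proves the stronger statement that the moduli space is not rationally connected; your argument carries the extra burden of justifying Zariski-local triviality of the affine bundle (which you do correctly, and which is anyway implicit in the paper's Theorem \ref{thm:2.5}), but in exchange it pins down the birational type $J(X)\times\A^M$ explicitly, which in particular also rules out stable rationality. Both are valid proofs of the proposition.
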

\begin{proof}
It is enough to show that the moduli space $\cat{M}_{lc}(n,d)$ is not rationally connected. Let 
$$p_0: \cat{M}_{lc}'(n,d) \longrightarrow \cat{U}(n,d)$$
be the morphism of varieties defined in \eqref{eq:21.5}.
Suppose that $\cat{M}'_{lc}(n,d)$ is rationally connected. Then, from Lemma \ref{lem:6.1}, 
$\cat{U}(n,d)$ is rationally connected, which is not true. Thus, $\cat{M}'_{lc}(n,d)$ is not rationally connected and hence not rational.
Since $\cat{M}'_{lc}(n,d)$ is an open dense subset of 
$\cat{M}_{lc}(n,d)$, $\cat{M}_{lc}(n,d)$ is not rational.
\end{proof}

A similar argument gives the following.
\begin{proposition}
\label{thm:6.3.1}
The moduli space $\cat{M}_h(n)$ is not rational.
\end{proposition}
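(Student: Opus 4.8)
The plan is to repeat, essentially verbatim, the argument used for Proposition \ref{thm:6.3}, with the forgetful morphism $q_0 : \cat{M}'_h(n) \to \cat{U}^s(n)$ of \eqref{eq:24.1} playing the role of $p_0$. Since a rational variety is rationally connected, it is enough to show that $\cat{M}_h(n)$ is not rationally connected; and since $\cat{M}'_h(n)$ is, by \eqref{eq:m} and the discussion around it, an open dense subset of $\cat{M}_h(n)$ (so that rationality of $\cat{M}_h(n)$ would force rationality of $\cat{M}'_h(n)$), it is enough to show that $\cat{M}'_h(n)$ is not rationally connected.

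First I would record that $q_0$ is surjective: by the Atiyah-Weil criterion every stable vector bundle $E$ of rank $n$ and degree $0$ carries a holomorphic connection $D$ (being indecomposable of degree $0$), and, $E$ being stable, every $D$-invariant subbundle $F$ satisfies $\mu(F) < \mu(E) = 0$, so $(E,D) \in \cat{M}'_h(n)$ with $q_0(E,D) = E$. Next I would note that $\cat{U}^s(n) = \cat{U}^s(n,0)$ is not rationally connected: the determinant morphism $E \mapsto \bigwedge^n E$ sends $\cat{U}^s(n)$ onto $Pic^0(X) = J(X)$, a positive-dimensional abelian variety (here $g \geq 3$), which contains no rational curve and hence is not rationally connected, so by the contrapositive of Lemma \ref{lem:6.1} neither is $\cat{U}^s(n)$. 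Equivalently, one may invoke \cite[Theorem 1.1]{KS} with $d = 0$, by which $\cat{U}(n,0)$ is birational to $J(X) \times \A^{(n^2-1)(g-1)}$ and therefore not rationally connected, together with the open immersion $\cat{U}^s(n) \hookrightarrow \cat{U}(n,0)$ and Lemma \ref{lem:6.1}.

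Finally, suppose $\cat{M}'_h(n)$ were rationally connected; then applying Lemma \ref{lem:6.1} to the dominant morphism $q_0$ would make $\cat{U}^s(n)$ rationally connected, contradicting the previous step. Hence $\cat{M}'_h(n)$ is not rationally connected, so it is not rational, and consequently $\cat{M}_h(n)$ is not rational. The argument is purely formal once the earlier results are available; the only point deserving a second look is the non-coprime case $d = 0$, $n \geq 2$, where $\cat{U}(n,0)$ is merely the moduli space of $S$-equivalence classes and can fail to be smooth. This is harmless, since all that is used is that $\cat{U}^s(n)$ dominates the positive-dimensional abelian variety $J(X)$ via the determinant map, which needs no coprimality hypothesis.
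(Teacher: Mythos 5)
Your proposal is correct and takes essentially the same route as the paper, which proves this proposition simply by ``a similar argument'' to Proposition~\ref{thm:6.3}, i.e.\ exactly the transfer along the dominant forgetful map $q_0 : \cat{M}'_h(n) \to \cat{U}^s(n)$ of \eqref{eq:24.1} together with Lemma~\ref{lem:6.1} and the density of $\cat{M}'_h(n)$ in $\cat{M}_h(n)$ that you carry out. One caveat: your parenthetical alternative of invoking \cite[Theorem 1.1]{KS} with $d=0$ does not literally apply, since the birationality to $J(X) \times \A^{(n^2-1)(g-1)}$ requires $\gcd(n,d)=1$ (in general the first factor is the degree-zero moduli space of rank $\gcd(n,d)$), but your primary argument---that $\cat{U}^s(n)$ dominates the abelian variety $J(X)$ via the determinant map, so it is not rationally connected---needs no coprimality and is all that the proof requires.
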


\begin{lemma}[\cite{GHS}, Corollary 1.3]
\label{lem:6.4}
Let $f: \cat{X} \to \cat{Y}$ be any dominant morphism 
of complex varieties. If $\cat{Y}$ and the general 
fibre of $f$ are rationally connected, then $\cat{X}$
is rationally connected.
\end{lemma}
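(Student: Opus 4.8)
The plan is to deduce the statement from the Graber--Harris--Starr theorem on sections of families over a curve (a proper morphism onto a smooth complex curve whose general fibre is rationally connected admits a section), together with the classical fact that a smooth projective complex variety is rationally connected as soon as two general points can be joined by a connected chain of rational curves. Since rational connectedness is a birational invariant of smooth projective varieties and we work over $\C$, I would first resolve and compactify so as to assume that $\cat{X}$ and $\cat{Y}$ are smooth projective, that $f$ is a surjective morphism, and that there is a dense open $U \subseteq \cat{Y}$ over which every fibre of $f$ is rationally connected.

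Next, pick two general points $x_1, x_2 \in \cat{X}$ and set $y_i = f(x_i) \in U$. Because $\cat{Y}$ is rationally connected and $y_1, y_2$ are general, there is a rational curve through $y_1$ and $y_2$; since a rationally connected variety carries an abundance of such curves, it can be chosen not to lie inside the proper closed set $\cat{Y} \setminus U$. Normalising it yields $g : \p^1 \to \cat{Y}$ whose image contains $y_1, y_2$ and meets $U$ in a dense open subset of $\p^1$. Form the fibre product $\cat{X}_{\p^1} = \cat{X} \times_{\cat{Y}} \p^1$ with projection $\pi : \cat{X}_{\p^1} \to \p^1$; it is proper over $\p^1$, and since $g(\p^1)$ meets $U$, its general fibre equals a general fibre of $f$ over a point of $U$, hence is rationally connected.

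Applying the Graber--Harris--Starr section theorem to $\pi$ produces a section $s : \p^1 \to \cat{X}_{\p^1}$; composing with the natural map $\cat{X}_{\p^1} \to \cat{X}$ gives a rational curve $R \subset \cat{X}$ that meets the fibre $F_i := f^{-1}(y_i)$ in some point $z_i$. As $y_i \in U$, each $F_i$ is rationally connected, so there is a rational curve inside $F_i \subseteq \cat{X}$ joining $z_i$ to $x_i$. Concatenating these two curves with $R$ exhibits $x_1$ and $x_2$ as the endpoints of a connected chain of rational curves in $\cat{X}$; since $x_1, x_2$ were general, this shows that $\cat{X}$ is rationally connected.

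The main obstacle is the Graber--Harris--Starr section theorem itself, whose proof is deep and which I would simply invoke (as the paper does, citing \cite{GHS}); everything else is the elementary chain-of-curves bookkeeping above, the only delicate point being the choice of the connecting curve in $\cat{Y}$ generic enough to run through the rationally connected locus $U$.
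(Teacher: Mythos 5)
This lemma is not proved in the paper at all --- it is quoted verbatim from \cite{GHS}, Corollary 1.3 --- and your argument is essentially the deduction given in that source: pass to a smooth projective model, join the images of two general points by a rational curve in $\cat{Y}$ passing through the good locus $U$, pull the (proper) family back to $\p^1$, invoke the Graber--Harris--Starr section theorem, close up the resulting curve inside two rationally connected fibres, and smooth the chain through general points (Koll\'ar--Miyaoka--Mori); so the proposal is correct and takes the same route as the cited reference. The one point you gloss over (as does the paper) is the meaning of rational connectedness for non-proper varieties: with the strict definition ``two general points lie on a complete rational curve contained in the variety,'' the opening reduction to a smooth projective compactification and the final descent back to the original $\cat{X}$ are not automatic, since connecting curves may be forced to meet the boundary divisor; the statement should therefore be read with the birational notion of rational connectedness (which is also what the paper needs, since it later applies the lemma to fibrations whose fibres are affine spaces), and under that convention your argument goes through.
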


\begin{proposition}
\label{prop:6.5}
The moduli space $\cat{M}'_{lc}(n,L)$ is rationally connected.
\end{proposition}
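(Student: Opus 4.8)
The plan is to exhibit a dominant morphism from $\cat{M}'_{lc}(n,L)$ onto a rationally connected base whose general fibre is also rationally connected, and then invoke Lemma \ref{lem:6.4}. The natural choice is the forgetful morphism $p: \cat{M}'_{lc}(n,L) \to \cat{U}^s(n,L)$ from \eqref{eq:12.5}, sending $(E,D) \mapsto E$. This is dominant (indeed surjective, since every stable $E$ with $\bigwedge^n E \cong L$ admits a logarithmic connection with the prescribed residues, by \cite[Theorem 1.3]{B} together with condition (P1)), and by Lemma \ref{lem:1} it is an $\Omega^1_{\cat{U}^s(n,L)}$-torsor; in particular every fibre $p^{-1}(E)$ is an affine space modelled on $\coh{0}{X}{\Omega^1_X \otimes \ad{E}}$, hence isomorphic to an affine space $\A^{(n^2-1)(g-1)}$, which is certainly rationally connected.

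The remaining input is that the base $\cat{U}^s(n,L)$ is rationally connected. This is where I would cite \cite[Theorem 1.2]{KS} (already used in the proof of Proposition \ref{prop:5.7}): $\cat{U}(n,L)$ is a rational variety, hence rationally connected, and $\cat{U}^s(n,L)$ is a dense open subvariety of it, so it too is rationally connected (rational connectedness is a birational invariant for smooth projective varieties, and a dense open subset of a rationally connected variety is rationally connected since general points and connecting rational curves can be taken to avoid the complement). With both the base $\cat{U}^s(n,L)$ and the general fibre of $p$ rationally connected, Lemma \ref{lem:6.4} applied to $p$ yields that $\cat{M}'_{lc}(n,L)$ is rationally connected.

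The only genuinely delicate point is making sure the hypotheses of Lemma \ref{lem:6.4} are literally met: we need $p$ to be a \emph{dominant morphism of complex varieties} with the general fibre rationally connected, and we need $\cat{U}^s(n,L)$ itself — not merely its projective closure $\cat{U}(n,L)$ — to be rationally connected. The first is immediate from Lemma \ref{lem:1}. For the second, I would note that $\cat{U}^s(n,L) \subset \cat{U}(n,L)$ is open and dense, and that \cite[Theorem 1.2]{KS} gives rationality (equivalently rational connectedness) of $\cat{U}(n,L)$, so rational connectedness descends to the open subset. I do not expect any serious obstacle here; the proof is a short two-step argument, and the substantive work has all been done in the cited references and in Lemmas \ref{lem:1} and \ref{lem:6.4}.

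\begin{proof}
By Lemma \ref{lem:1}, the morphism $p: \cat{M}'_{lc}(n,L) \to \cat{U}^s(n,L)$ of \eqref{eq:12.5} is an $\Omega^1_{\cat{U}^s(n,L)}$-torsor; it is surjective because every stable bundle $E$ with $\bigwedge^n E \cong L$ carries a logarithmic connection with the prescribed residues by \cite[Theorem 1.3]{B}. In particular $p$ is a dominant morphism of complex varieties, and each fibre $p^{-1}(E)$ is an affine space modelled over $\coh{0}{X}{\Omega^1_X \otimes \ad{E}}$, hence isomorphic to $\A^{(n^2-1)(g-1)}$, which is rationally connected. By \cite[Theorem 1.2]{KS} the moduli space $\cat{U}(n,L)$ is a rational variety, hence rationally connected; since $\cat{U}^s(n,L)$ is a dense open subvariety of $\cat{U}(n,L)$ (two general points of $\cat{U}^s(n,L)$ together with a connecting rational curve in $\cat{U}(n,L)$ can be taken to lie in $\cat{U}^s(n,L)$), $\cat{U}^s(n,L)$ is also rationally connected. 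Applying Lemma \ref{lem:6.4} to $p$, with base $\cat{U}^s(n,L)$ and general fibre $\A^{(n^2-1)(g-1)}$ both rationally connected, we conclude that $\cat{M}'_{lc}(n,L)$ is rationally connected.
\end{proof}
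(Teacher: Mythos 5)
Your proof is correct and follows the same route as the paper: the forgetful map $p$ of \eqref{eq:12.5} with affine-space fibres, rationality of the base via \cite[Theorem 1.2]{KS}, and Lemma \ref{lem:6.4}. Note that under the paper's standing coprimality assumption $\cat{U}^s(n,L)=\cat{U}(n,L)$ is itself smooth projective, so your extra care about passing rational connectedness to the open stable locus is not actually needed.
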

\begin{proof}
Consider the dominant  morphism 
$$p: \cat{M}'_{lc}(X,L) \longrightarrow \cat{U}(n,L)$$
defined in \eqref{eq:12.5}. As observed earlier every 
fibre of $p$ is an affine space and hence rationally 
connected. Since $\cat{U}(n,L)$ is rationally connected,  
from Lemma \ref{lem:6.4},  $\cat{M}'_{lc}(n,L)$ is 
rationally connected.
\end{proof}

\begin{corollary}
\label{cor:6.7}
$\cat{M}_{lc}(n,L)$ is rationally connected.
\end{corollary}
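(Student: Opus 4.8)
The plan is to deduce the corollary from Proposition~\ref{prop:6.5} together with the structure of $\cat{M}_{lc}(n,L)$ as described in Section~\ref{Pre}. Recall that $\cat{M}'_{lc}(n,L) \subset \cat{M}_{lc}(n,L)$ is the open subvariety consisting of those logarithmic connections $(E,D)$ whose underlying bundle $E$ is stable. By Lemma~\ref{lem:2.5}, the complement $Z = \cat{M}_{lc}(n,L) \setminus \cat{M}'_{lc}(n,L)$ has codimension at least $(n-1)(g-2)+1 \geq 2$ (using $n \geq 2$, $g \geq 3$), so in particular $\cat{M}'_{lc}(n,L)$ is a \emph{dense} open subset of the irreducible smooth quasi-projective variety $\cat{M}_{lc}(n,L)$.

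First I would record that rational connectedness is a birational invariant for smooth (or, more generally, proper) varieties, but more to the point, it is preserved under dominant rational maps by Lemma~\ref{lem:6.1}. Since $\cat{M}'_{lc}(n,L)$ is a dense open subset of $\cat{M}_{lc}(n,L)$, the inclusion map $\cat{M}'_{lc}(n,L) \hookrightarrow \cat{M}_{lc}(n,L)$ is a dominant morphism of complex algebraic varieties. By Proposition~\ref{prop:6.5}, $\cat{M}'_{lc}(n,L)$ is rationally connected; hence by Lemma~\ref{lem:6.1} applied to this inclusion, $\cat{M}_{lc}(n,L)$ is rationally connected as well.

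There is essentially no obstacle here: the corollary is a one-line consequence of the proposition and the density of $\cat{M}'_{lc}(n,L)$, which is guaranteed by the codimension estimate in Lemma~\ref{lem:2.5}. The only subtlety worth a sentence is to make sure that ``rationally connected'' is being used in a way that transfers across the open immersion; since two \emph{general} points of $\cat{M}_{lc}(n,L)$ lie in the dense open set $\cat{M}'_{lc}(n,L)$, a rational curve in $\cat{M}'_{lc}(n,L)$ joining them is in particular a rational curve in $\cat{M}_{lc}(n,L)$, so the definition is satisfied directly without even invoking Lemma~\ref{lem:6.1}. I would phrase the final proof using whichever of these two formulations is most economical — likely the direct one — giving:

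\begin{proof}
By Lemma~\ref{lem:2.5}, the open subvariety $\cat{M}'_{lc}(n,L)$ is dense in the irreducible variety $\cat{M}_{lc}(n,L)$. By Proposition~\ref{prop:6.5}, $\cat{M}'_{lc}(n,L)$ is rationally connected. Since $\cat{M}'_{lc}(n,L)$ is a dense open subset, two general points of $\cat{M}_{lc}(n,L)$ lie in $\cat{M}'_{lc}(n,L)$ and can therefore be joined by a rational curve contained in $\cat{M}'_{lc}(n,L) \subset \cat{M}_{lc}(n,L)$. Hence $\cat{M}_{lc}(n,L)$ is rationally connected. Alternatively, apply Lemma~\ref{lem:6.1} to the dominant inclusion morphism $\cat{M}'_{lc}(n,L) \hookrightarrow \cat{M}_{lc}(n,L)$.
\end{proof}
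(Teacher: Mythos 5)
Your proof is correct and follows essentially the same route as the paper: the paper deduces the corollary from Proposition~\ref{prop:6.5} together with the fact that $\cat{M}'_{lc}(n,L)$ is a dense open subset of $\cat{M}_{lc}(n,L)$, invoking birational invariance of rational connectedness, which is exactly the content of your direct general-points argument (and of your alternative via Lemma~\ref{lem:6.1}).
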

\begin{proof}
It follows from the fact that 
rationally connectedness is a birational invariant, and $\cat{M}'_{lc}(n,L)$ is a dense open subset of 
 $\cat{M}_{lc}(n,L)$.
\end{proof}

Therefore, we have  a natural question.
\begin{question}
\label{q:1}
Is the moduli space $\cat{M}_{lc}(n,L)$ rational ?
\end{question}

\section*{Acknowledgements} 
The author would like to thank the anonymous referees for 
helpful comments, numerous suggestions and corrections.



\begin{thebibliography} {999}


\bibitem{A} M.~F.~ Atiyah, {\it Complex analytic 
connections in fibre bundles}, {Trans. Amer. Math. Soc.} {\bf 85}(1957), 181--207.

\bibitem{AB} M.~F.~Atiyah, R.~Bott,
{\it The Yang-Mills equations over Riemann surfaces}
Philos. Trans. Roy. Soc. London Ser. A {\bf 308} (1983), no. 1505, 523–-615.



\bibitem{BKN} V.~Balaji, A.~D.~King, 
P.~E.~Newstead. {\it Algebraic cohomology of the moduli 
space of rank $2$ vector bundles on a curve},
Topology {\bf 36} (1997), 567-577. 


\bibitem{B02}
I.~Biswas, {\it Differential operators on a polarized abelian variety}, Trans.~Amer.~Math.~Soc. {\bf 354} (2002), no. 10, 3883–3891. 

\bibitem{B04}
I.~Biswas, {\it On the moduli space of $\tau$-connections on a compact Riemann surface}, Compos. Math. {\bf 140} (2004), no. 2, 423–434. 

\bibitem{BR1} I.~Biswas, N.~Raghavendra, 
{\it Line bundles over a 
moduli space 
of logarithmic connections on a Riemann surface}, 
{Geom. Funct. Anal.} {\bf 15}(2005), 780-808.

\bibitem{BM7} I.~Biswas, V.~Muñoz.
{\it The Torelli theorem for the moduli spaces of connections on a Riemann surface.}
Topology 46 (2007), no. 3, 295–317.

\bibitem{B} I.~Biswas, A.~Dan, A.~Paul.  
{\it Criterion for logarithmic 
connections with prescribed residues},
{manuscripta math}. {\bf 155}(2018),77--88.







\bibitem{BM} A.~Borel, P.-P.~Grivel, B.~Kaup, 
A.~Haefliger,
B.~Malgrange, F.~ Ehlers.
\newblock {\em Algebraic D-modules}.
\newblock Perspectives in Mathematics, 2. Academic 
Press, Inc., Boston, 
MA, xii+355 pp, 1987.



\bibitem{CH}  
I.~Choe, J.-M. Hwang,
{\it  Chow group of $1$-cycles on the moduli space of 
vector bundles of rank 2 over a curve.} 
Math. Z. 253 (2006), no. 2, 281–293. 

\bibitem{D} P.~Deligne, {\it Equations 
diff\'erentielles\'a points 
singuliers r\'eguliers }. Lecture Notes in Mathematics, vol. 163.
Springer, Berlin(1970).

\bibitem{D1} P.~Deligne, {\it Th\'eory de Hodge II},
{Publ. Math. Inst. Hautes \'Etudes Sci.} {\bf 40}(1972),
5-57.
\bibitem{D2} P.~Deligne, {\it Th\'eory de Hodge III},
{Publ. Math. Inst. Hautes \'Etudes Sci.} {\bf 44}(1975),
6-77.

\bibitem{DN} J.~M.~Dr\'ezet,,  M.~S.~Narasimhan,
\textit{Groupe de Picard des vari\'et\'es de modules de fibr\'es
semi-stable sur les courbes alg\'ebriques}. { Invent. 
Math.}, {\bf 97}(1)(1989), 53–94.


\bibitem{F} W.~Fulton, {\it Intersection Theory},
Springer-Verlag Berlin Heidelberg, (1998).

\bibitem{GHS}
T.~Graber, J.~Harris and J. Starr, {\it Families of rationally connected varieties}, J. Amer. Math.
Soc. {\bf 16} (2003), 57-67.

\bibitem{GD} A.~Grothendieck, J.~Dieudonn\'e, EGA IV. \'Etude locale des
 sch\'emas et des morphismes de sch\'emas, Quartri\`eme partie,
{\it Inst. Hautes \'Etudes Sci. Publ. Math.} \textbf{32} (1967), 5--361.




\bibitem{I} B.~Iversen, Cohomology of Sheaves, {\it Universitext. Springer-Verlag, Berlin-Heidelberg-New
York-Tokyo} (1986).

\bibitem{KS} 
A.~King, A.~Schofield, {\it Rationality of moduli space of 
vector bundles on curves}, Indag. Math. (N.S.)
{\bf 10}(1999) 519-535.

\bibitem{K}
F.~Kirwan, {\it The cohomology rings of moduli spaces
of bundles over Riemann surfaces},



\bibitem{Ko} 
J.~Koll\'ar, {\it Rational curves on algebraic varieties}, Ergebnisse der Mathematik und ihrer
Grenzgebiete, 3 Folge, Band 32, Springer Verlag, 1996.

\bibitem{Ma} M.~Maruyama, {\it Openness of a family of torsion free sheaves}, Jour. Math. Kyoto Univ. {\bf 16}
(1976), 627-637


\bibitem{NR} M.~S.~Narasimhan,  S.~Ramanan,
{\it Moduli space of vector bundles on a compact Riemann surface} {Ann. Math.} {\bf 89}(1969), 14-51.

\bibitem{NR1}M.~S.~Narasimhan,  S.~Ramanan,
{\it Deformations of the moduli space of vector bundles over an algebraic curve} {Ann. Math.} {\bf 101}(1975), 391-417.

\bibitem{NN1} M.~S.~Narasimhan,  M.V.Nori, {\it Polarisations on an abelian variety}, Proc. Ind. Acad. Sci.  {\bf 90} (1981), 125-128.

\bibitem{N} N.~Nitsure, {\it Moduli of semistable logarithmic connections},
{Jour. Amer. Math. Soc.} {\bf 6}(1993), 597--609.


\bibitem{O} M.~Ohtsuki, {\it A residue formula 
for Chern classes associated
with logarithmic connections}, Tokyo J. Math. Vol.
{\bf 5}, No. 1, 1982.

\bibitem{R}
S.~Ramanan, Global calculus, volume 65 of {\it Graduate Studies in Mathematics}. American Mathematical Society, Providence, RI, 2005.

\bibitem{S} K.~Saito, {\it Theory of logarithmic 
differential forms and logarithmic vector fields}, 
J.~Fac.~Sci.~Univ.~Tokyo Sect. 
IA Math. {\bf 27}(1980), no. 2, 265--291.

\bibitem{Se} R.~Sebastian, {\it Torelli theorems for moduli space of logarithmic connections and parabolic bundles}, { manuscripta math.} (2011) 136: 249-271.

\bibitem{S1} C.~T.~Simpson, {\it Moduli 
of representations of 
fundamental group of
a smooth projective variety}, \textbf{I}, 
{ Inst. Hautes \'Etudes 
Sci. Publ.
Math. }{\bf 79}(1994), 47-129. 

\bibitem {S2} C.~T.~Simpson, {\it Moduli of representations of 
fundamental group of
a smooth projective variety,} \textbf{II}, 
{Inst. Hautes \'Etudes 
Sci. Publ.
Math.} {\bf 80}(1994), 5-79. 



\bibitem{AS1} A.~Singh, {\it Moduli space of 
rank one logarithmic connections over a compact Riemann surface},  {C. R. Math. Acad. Sci. Paris} {\bf 358} (2020), no. 3, 297–301.

\bibitem{AS2} A.~Singh, {\it Differential operators on Hitchin variety,}  J. Algebra {\bf 566} (2021), 361–373. 

\bibitem{AS} A.~Singh,
{\it Moduli space of logarithmic connections singular 
over a finite subset of a compact Riemann surface,}
{Math. Res. Lett.} {\bf 28} (2021), no. 4, 863–887.









\bibitem{Vo1} C.~Voisin, {\it Hodge Theory and Complex 
Algebraic Geometry} Vol. I , {Cambridge 
Studies in Advanced Mathematics}(2002).

\bibitem{Vo2} C.~Voisin, {\it Hodge Theory and Complex 
Algebraic Geometry} Vol. II , {Cambridge 
Studies in Advanced Mathematics}(2003).


\bibitem{W} A.~Weil, {\it G\'en\'eralisation des fonctions ab\'eliennes}, { J.~Math.~Pures
Appl.} \textbf{17} (1938), 47--87.

\end{thebibliography}
\end{document}